 \let\mathscr\relax
\theoremstyle{definition}
\newtheorem{defin}{Definition}[section]
\theoremstyle{definition}
\theoremstyle{plain}
\newtheorem{theo}[defin]{Theorem}
\theoremstyle{plain}
\newtheorem{prop}[defin]{Proposition}
\theoremstyle{plain}
\newtheorem{lem}[defin]{Lemma}
\theoremstyle{plain}
\newtheorem{cor}[defin]{Corollary}
\theoremstyle{definition}
\newtheorem{rmk}[defin]{Remark}
\theoremstyle{definition}
\theoremstyle{definition}
\theoremstyle{plain}
\newtheorem{conj}[defin]{Conjecture}
\theoremstyle{definition}
\theoremstyle{definition}
\theoremstyle{plain}
\newtheorem{dade}[defin]{Dade's Projective Conjecture}
\theoremstyle{plain}
\newtheorem{alp}[defin]{Alperin's Weight Conjecture}
\theoremstyle{plain}
\newtheorem{ctc}[defin]{Character Triple Conjecture}
\theoremstyle{definition}
\theoremstyle{plain}
\theoremstyle{definition}
\theoremstyle{definition}
\newtheorem*{defin*}{Definition}
\theoremstyle{definition}
\newtheorem*{ex*}{Example}
\theoremstyle{plain}
\newtheorem*{theo*}{Theorem}
\theoremstyle{plain}
\theoremstyle{plain}
\newtheorem*{conj*}{Conjecture}
\newtheorem*{prop*}{Proposition}
\theoremstyle{plain}
\newtheorem*{lem*}{Lemma}
\theoremstyle{plain}
\newtheorem*{cor*}{Corollary}
\theoremstyle{definition}
\newtheorem*{rmk*}{Remark}
\theoremstyle{definition}
\newtheorem*{exe*}{Exercise}
\theoremstyle{plain}
\newtheorem{theoA}{Theorem}
\theoremstyle{plain}
\theoremstyle{plain}
\theoremstyle{plain}
\theoremstyle{plain}
\newtheorem{corA}[theoA]{Corollary}
\numberwithin{equation}{section}
\def\thm@space@setup{%
  \thm@preskip=\parskip \thm@postskip=0pt
}
\setlist[enumerate]{label=(\roman*)}
\def\bl{{\rm bl}}
\def\irr{{\rm Irr}}
\def\ab{{\rm Ab}}
\def\aut{{\rm Aut}}
\def\GL{{\rm GL}}
\def\n{{\mathbf{N}}} 
\def\c{{\mathbf{C}}} 
\def\z{{\mathbf{Z}}} 
\def\O{{\mathbf{O}}} 
\def\E{{\mathcal{E}}} 
\def\C{{\mathcal{C}}} 
\def\k{{k}}
\def\G{{\mathbf{G}}} 
\def\H{{\mathbf{H}}} 
\def\K{{\mathbf{K}}} 
\def\L{{\mathbf{L}}} 
\def\M{{\mathbf{M}}} 
\def\T{{\mathbf{T}}} 
\def\S{{\mathbf{S}}} 
\def\CL{{\mathcal{L}}} 
\DeclareMathOperator{\CP}{\mathcal{C}\CPkern \mathcal{P}}
\newcommand{\CPkern}{%
  \mkern-1.0mu
  \mathchoice{}{}{\mkern0.2mu}{\mkern0.5mu}%
}
\newcommand{\uset}[3][0ex]{%
  \mathrel{\mathop{#3}\limits_{
    \vbox to#1{\kern-7\ex@
    \hbox{$\scriptstyle#2$}\vss}}}}
\newcommand{\ws}[1][1.5]{
  \mathrel{\scalebox{#1}[1]{$\sim$}}
}
\newcommand{\iso}[1]{\ws_{#1}}
\newcommand{\blocktheorem}[1]{%
  \csletcs{old#1}{#1}
  \csletcs{endold#1}{end#1}
  \RenewDocumentEnvironment{#1}{o}
    {\par\addvspace{1.5ex}
     \noindent\begin{minipage}{\textwidth}
     \IfNoValueTF{##1}
       {\csuse{old#1}}
       {\csuse{old#1}[##1]}}
    {\csuse{endold#1}
     \end{minipage}
     \par\addvspace{1.5ex}}
}
\def\blfootnote{\gdef\@thefnmark{}\@footnotetext}
\title{
{\huge\bf The Brown complex in non-defining characteristic and applications}\\
\author{\Large Damiano Rossi}
\blfootnote{\emph{$2010$ Mathematical Subject Classification:} $20$J$05$, $20$G$40$, $55$P$91$, $20$C$20$
\\
\emph{Key words and phrases:} Brown complex, finite reductive groups, generic Sylow theory, Dade's conjecture.
\\
\date{}
I would like to thank Francesco Fumagalli for some comments on an earlier version of this paper and Gunter Malle for a thorough reading of the material presented here and for pointing out some inaccuracies. This work was initiated during a visit of the author to the University of Florence and then completed during a stay at the Mathematisches Forschungsinstitut Oberwolfach funded by a Leibniz Fellowship. I am grateful to both institutions for their hospitality and support.
}}
\begin{document}

\renewcommand{\thetheoA}{\Alph{theoA}}

\renewcommand{\thecorA}{\Alph{corA}}

\selectlanguage{english}

\maketitle

\begin{abstract}
We study the Brown complex associated to the poset of $\ell$-subgroups in the case of a finite reductive group defined over a field $\mathbb{F}_q$ of characteristic prime to $\ell$. First, under suitable hypotheses, we show that its homotopy type is determined by the generic Sylow theory developed by Brou\'e and Malle and, in particular, only depends on the multiplicative order of $q$ modulo $\ell$. This result leads to several interesting applications to generic Sylow theory, mod $\ell$ homology decompositions, and $\ell$-modular representation theory. Then, we conduct a more detailed study of the Brown complex in order to establish an explicit connection between the local-global conjectures in representation theory of finite groups and the generic Sylow theory. This is done by isolating a family of $\ell$-subgroups of finite reductive groups that corresponds bijectively to the structures controlled by the generic Sylow theory. 
\end{abstract}

\section*{Introduction}

The Brown complex $\Delta(\mathcal{S}_\ell^\star(G))$ associated to the poset of non-trivial $\ell$-subgroups of a finite group $G$, where $\ell$ is a prime dividing the order of $G$, was first introduced by K. S. Brown in his work on the Euler characteristic \cite{Bro75}. Its study led to important contributions to $\ell$-local group theory, mod $\ell$ group cohomology, and $\ell$-modular representation theory. In his seminal paper \cite{Qui78}, Quillen presented the first systematic study of the Brown complex and proved that its homotopy type coincides with that of (the simplicial complex of) closely related sets of subgroups. In particular, for the case of a finite reductive group with $\ell$ equal to the defining characteristic, he showed that the Brown complex is homotopy equivalent to the Tits building associated to the set of proper rational parabolic subgroups \cite{Tit74}. In this paper we are interested in the Brown complex of a finite reductive group in the remaining case where $\ell$ is different from the defining characteristic.

Our first theorem provides a description of the homotopy type of the Brown complex in terms of the generic Sylow theory developed by Brou\'e and Malle in \cite{Bro-Mal92}. Let $\G$ be a linear algebraic group defined over an algebraically closed field $\mathbb{F}$ of characteristic $p$ and assume that $\G$ is connected and reductive. Let $F:\G\to \G$ be a Frobenius endomorphism defining an $\mathbb{F}_q$-structure on (the variety) $\G$ for some power $q$ of $p$. The set of $\mathbb{F}_q$-rational points $\G^F$, or sometimes the pair $(\G,F)$, is called a \textit{finite reductive group}. Fix a prime $\ell$ different form $p$ and denote by $e_{\ell}(q)$ the multiplicative order of $q$ modulo $\ell$. Using the generic Sylow theory we define the simplicial complex $\Delta(\CL_{e_{\ell}(q)}^\star(\G,F))$ associated to the poset of proper $e_{\ell}(q)$-split Levi subgroups of $(\G,F)$ (see Section \ref{sec:Order polynomial} for a precise definition).

\begin{theoA}
\label{thm:Main homotopy equivalence for ell-subgroups and e-split Levi}
Let $(\G,F)$ be a finite reductive group and consider a prime number $\ell\in\pi(\G,F)$ (see Definition \ref{def:Prime condition}) not dividing the order of $\z(\G)^F$. Then there exists a $\G^F$-homotopy equivalence
\[\Delta\left(\mathcal{S}_\ell^\star\left(\G^F\right)\right)\simeq_{\G^F}\Delta\left(\CL_{e_{\ell}(q)}^\star\left(\G,F\right)\right).\]
\end{theoA}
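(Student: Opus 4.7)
The plan is to apply Quillen's fiber theorem (Theorem~A of \cite{Qui78}) to a suitable $\G^F$-equivariant map of posets. First, since elementary abelian $\ell$-subgroups are easier to control via centralizer arguments in reductive groups, I would use Quillen's original equivalence $\Delta(\mathcal{S}_\ell^\star(\G^F))\simeq_{\G^F}\Delta(\mathcal{A}_\ell^\star(\G^F))$ to replace the Brown complex with the order complex of the poset $\mathcal{A}_\ell^\star(\G^F)$ of non-trivial elementary abelian $\ell$-subgroups. This reduces the problem to comparing $\mathcal{A}_\ell^\star(\G^F)$ with $\CL_{e}^\star(\G,F)$, where $e:=e_\ell(q)$.

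Next, I would construct the comparison via centralizers and $\Phi_{e}$-Sylow tori. For $E\in\mathcal{A}_\ell^\star(\G^F)$, the connected centralizer $\c_\G^\circ(E)$ is an $F$-stable reductive subgroup of $\G$, proper because $E\not\subseteq\z(\G)$ by the hypothesis $\ell\nmid|\z(\G)^F|$; it therefore admits Sylow $\Phi_{e}$-tori in the sense of \cite{Bro-Mal92}. Any such choice $\S$ determines an $e$-split Levi subgroup $\L:=\c_\G(\S)\in\CL_{e}^\star(\G,F)$. Because this $\L$ is only canonical up to $\c_\G^\circ(E)^F$-conjugacy, I would organise the correspondence through an auxiliary $\G^F$-poset $\Pr$ of compatible pairs $(E,\L)$, and prove that both projections
\[\mathcal{A}_\ell^\star(\G^F)\xleftarrow{\ \pi_1\ }\Pr\xrightarrow{\ \pi_2\ }\CL_{e}^\star(\G,F)\]
are $\G^F$-homotopy equivalences, yielding the theorem by zig-zag.

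The verification of each equivalence would proceed via Quillen's fiber theorem. The fiber of $\pi_1$ over a fixed $E$ is (order-equivalent to) the set of Sylow $\Phi_{e}$-tori of $\c_\G^\circ(E)$; by the generic Sylow theory these form a single $\c_\G^\circ(E)^F$-orbit, so the fiber has contractible nerve. For $\pi_2$ over a fixed $\L=\c_\G(\S)$, the fiber consists of non-trivial elementary abelian $\ell$-subgroups $E\leq \L^F$ whose associated Sylow $\Phi_{e}$-torus is $\G^F$-conjugate to $\S$. I would establish contractibility by exhibiting a canonical cone point: since $e=e_\ell(q)$ divides $|\S^F|$, the $\ell$-socle $\Omega_1(\S^F_\ell)$ is a nontrivial elementary abelian $\ell$-subgroup centralised by $\L$, and the hypothesis $\ell\in\pi(\G,F)$ should guarantee that it lies in the fiber and that adjoining it to any other $E$ in the fiber produces an element still in the fiber.

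The main obstacle I anticipate is precisely the contractibility of this second fiber. Ensuring that every $E\in\pi_2^{-1}(\L)$ meets the cone-point subgroup in a compatible way requires a delicate description of how the $\ell$-torsion in $\L^F$ interacts with the generic Sylow datum $\S$, and both arithmetic hypotheses on $\ell$ enter here in an essential way: the condition $\ell\in\pi(\G,F)$ rules out pathologies in the $\ell$-local structure of $\c_\G^\circ(E)$, while $\ell\nmid|\z(\G)^F|$ prevents degenerate $E$ contained in the centre from contributing spurious elements. The $\G^F$-equivariance of all constructions is automatic from the naturality of centralisers and of the generic Sylow correspondence, so the geometric contractibility of the second fiber is the true content of the argument.
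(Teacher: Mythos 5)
Your proposal is not the paper's route, and it contains a genuine gap. The paper avoids the zig-zag entirely: the key observation you are missing is that there \emph{is} a canonical choice of $e$-split Levi subgroup attached to an abelian $\ell$-subgroup $A$. Instead of picking a Sylow $\Phi_e$-torus of $\c_\G^\circ(A)$ (which is only defined up to conjugacy, forcing you into the auxiliary poset $\Pr$), the paper uses the unique maximal $\Phi_e$-torus $\z^\circ(\c_\G^\circ(A))_{\Phi_e}$ of the \emph{connected centre} of $\c_\G^\circ(A)$ and sends $A\mapsto\c_\G(\z^\circ(\c_\G^\circ(A))_{\Phi_e})$. This is an honest order-reversing $\G^F$-map $\ab_\ell^\star(\G^F)\to\CL_e^\star(\G,F)^{\rm op}$, and the theorem follows from one application of Lemma~\ref{lem:Criterion for G-homotopy}, checking join-contractibility of the fibre over each $\L$ with cone point $A_0=\z(\L)^F_\ell$ (essentially the same contractibility step you propose for $\pi_2$, though working with all abelian rather than elementary abelian subgroups).

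The gap in your version is the contractibility of $\pi_1$-fibres. You claim the fibre over a fixed $E$ is the set of Sylow $\Phi_e$-tori of $\c_\G^\circ(E)$ and conclude it has contractible nerve "because these form a single $\c_\G^\circ(E)^F$-orbit." This inference is false: a transitive action on a set of incomparable elements produces a discrete nerve with as many components as there are Sylow tori, which is contractible only when there is exactly one. Moreover, to apply Quillen/Th\'evenaz--Webb you actually need the preimage of an entire upset or downset, not the literal fibre, and for the equivariant version you need $G_E$-contractibility, i.e.\ contractibility of all fixed-point subcomplexes --- none of which single-orbitness provides. Without an honest cone point (or some other argument) for the $\pi_1$-fibres, the zig-zag does not close. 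The canonical $\Phi_e$-torus $\z^\circ(\c_\G^\circ(A))_{\Phi_e}$ is precisely the device that makes this non-canonicity evaporate, and it is the true content of the paper's construction.

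One further remark: the paper deliberately works with \emph{all} abelian $\ell$-subgroups rather than elementary abelian ones. For the present theorem your restriction to elementary abelian subgroups is harmless (both complexes are $\G^F$-homotopy equivalent to the Brown complex), but the $e$-closure machinery developed later in the paper does not preserve exponent $\ell$, which is why the larger class is used throughout.
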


The above theorem tells us, in particular, that the homotopy type of the Brown complex in non-defining characteristics is generic, that is, it does not depend on $\ell$ but only on the positive integer $e_{\ell}(q)$. In this way, we obtain equivalent local structures at different primes $\ell$ and $\ell'$ for a fixed finite reductive group $(\G,F)$. We mention that, in a somewhat opposite direction, work of Broto, M{\o}ller and Oliver \cite{Bro-Mol-Oli} shows that there exist different finite reductive groups $(\G,F)$ and $(\G',F')$ having (isotypically) equivalent fusion systems at a fixed prime $\ell$.

Before discussing the motivation behind Theorem \ref{thm:Main homotopy equivalence for ell-subgroups and e-split Levi}, we derive some interesting corollaries of this result. First, inspired by Brown's homological Sylow theorem \cite[Corollary 2]{Bro75}, which states that the Euler characteristic of the Brown complex is congruent to $1$ modulo the order of a Sylow $\ell$-subgroup, we obtain the following congruence for the Euler characteristic of the simplicial complex of $e_{\ell}(q)$-split Levi subgroups. This should be interpreted as a homological version of Brou\'e--Malle's generic Sylow theorem \cite[Theorem 3.4 (4)]{Bro-Mal92}.

\begin{corA}
\label{cor:Main generic homological Sylow}
Let $(\G,F)$ be a finite reductive group and suppose that the prime $\ell$ is large for $(\G,F)$ (see \cite[Definition 5.1]{Bro-Mal-Mic93}) and does not divide the order of $\z(\G)^F$. Then
\[\chi\left(\Delta\left(\CL_{e_{\ell}(q)}^\star\left(\G,F\right)\right)\right)\equiv 1 \enspace\left({\rm mod}\enspace\left(\Phi_{e_{\ell}(q)}(q)^a\right)_\ell\right)\]
where $a$ is the $\Phi_{e_{\ell}(q)}(x)$-valuation of the order polynomial $P_{(\G,F)}(x)$ (as defined in Section \ref{sec:Order polynomial}) and $n_\ell$ denotes the largest power of $\ell$ dividing an integer $n$.
\end{corA}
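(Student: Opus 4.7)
The plan is to combine Theorem~\ref{thm:Main homotopy equivalence for ell-subgroups and e-split Levi} with Brown's original homological Sylow theorem \cite[Corollary~2]{Bro75}, reducing the congruence to a statement on the $\ell$-part of $|\G^F|$ which is controlled by the order polynomial via Brou\'e--Malle's generic Sylow theory. The ``large prime'' hypothesis is used twice: once to ensure we are in the setting of Theorem~\ref{thm:Main homotopy equivalence for ell-subgroups and e-split Levi} (in particular, that $\ell\in\pi(\G,F)$), and once to isolate the $\Phi_{e_\ell(q)}$-contribution in the factorisation of $P_{(\G,F)}(x)$.

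First, I would check that the assumptions of the corollary are strong enough to apply Theorem~\ref{thm:Main homotopy equivalence for ell-subgroups and e-split Levi}. Since $\ell$ is large for $(\G,F)$ in the sense of \cite[Definition~5.1]{Bro-Mal-Mic93}, it satisfies the prime condition of Definition~\ref{def:Prime condition}, i.e.\ $\ell\in\pi(\G,F)$; together with the standing assumption $\ell\nmid|\z(\G)^F|$, Theorem~\ref{thm:Main homotopy equivalence for ell-subgroups and e-split Levi} yields a (non-equivariant) homotopy equivalence
\[
\Delta\bigl(\mathcal{S}_\ell^\star(\G^F)\bigr)\simeq \Delta\bigl(\CL_{e_\ell(q)}^\star(\G,F)\bigr),
\]
and in particular their Euler characteristics agree.

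Next, applying Brown's theorem to the finite group $G=\G^F$ gives
\[
\chi\bigl(\Delta(\mathcal{S}_\ell^\star(\G^F))\bigr)\equiv 1\pmod{|\G^F|_\ell}.
\]
Combining with the previous step, it suffices to prove that $|\G^F|_\ell=(\Phi_{e_\ell(q)}(q)^a)_\ell$. For this, I would use that $|\G^F|=P_{(\G,F)}(q)$ and write the cyclotomic factorisation $P_{(\G,F)}(x)=\prod_d\Phi_d(x)^{a_d}$, so that $a=a_{e_\ell(q)}$ by definition. The ``large'' hypothesis forces $\ell$ to be larger than every $d$ for which $a_d\neq 0$, so that $\ell\mid\Phi_d(q)$ happens if and only if $d=e_\ell(q)$, and moreover $\ell$ is coprime to the relevant multiplicities. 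Consequently only the factor $\Phi_{e_\ell(q)}(q)^a$ contributes to $|\G^F|_\ell$, yielding the desired identity $|\G^F|_\ell=(\Phi_{e_\ell(q)}(q)^a)_\ell$.

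The only nontrivial point is the last one: checking that the ``large'' condition indeed kills all other cyclotomic contributions to the $\ell$-part, which I would do by citing the relevant consequences of \cite[Section~5]{Bro-Mal-Mic93}. Everything else is a direct assembly of Theorem~\ref{thm:Main homotopy equivalence for ell-subgroups and e-split Levi}, the homotopy invariance of $\chi$, and Brown's theorem.
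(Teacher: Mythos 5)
Your overall strategy matches the paper's: homotopy-invariance of $\chi$ reduces the claim to Brown's congruence for $\chi(\Delta(\mathcal{S}_\ell^\star(\G^F)))$, and the modulus is then rewritten via $|\G^F|_\ell = \Phi_{e_\ell(q)}^a(q)_\ell$. However, there are two genuine gaps in how you justify the two key ingredients.

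First, you assert that ``$\ell$ large'' implies $\ell\in\pi(\G,F)$, and then apply Theorem~\ref{thm:Main homotopy equivalence for ell-subgroups and e-split Levi} directly. The paper does \emph{not} make this claim. From \cite[Proposition 5.2]{Bro-Mal-Mic93} one only extracts that large primes are good and do not divide the two component-group indices; the remaining conditions in Definition~\ref{def:Prime condition} (non-divisibility of $|\z(\G_{\rm sc})^F|$, $\ell\neq 3$ in type $^3\mathbf{D}_4$, etc.) are not covered. This is precisely why the paper includes Remark~\ref{rmk:Large primes}: it isolates the single place in the proof of Theorem~\ref{thm:Homotopy equivalence for e-split Levi} where $\ell\in\pi'(\G,F)$ is used (namely in invoking \cite[Lemma 22.3]{Cab-Eng04}), and shows that this step is automatic when $\ell$ is large and $(\G,F,e_\ell(q))$-adapted. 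Your proof needs either that remark or a separate argument that large $\Rightarrow\pi(\G,F)$; as written it is unjustified.

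Second, you silently assume that the unique integer $e_0$ witnessing largeness equals $e_\ell(q)$, i.e.\ that $\Phi_{e_\ell(q)}(x)$ actually divides $P_{(\G,F)}(x)$. The definition of ``large'' does not force this: $\ell\mid\Phi_d(q)$ for all $d$ of the form $e_\ell(q)\ell^j$, and it could a priori be that the only such $d$ dividing the order polynomial has $j>0$. In that degenerate case $a=0$, so the intermediate identity $|\G^F|_\ell=(\Phi_{e_\ell(q)}(q)^a)_\ell$ you rely on is simply false (the left side can exceed $1$), and Remark~\ref{rmk:Large primes} does not apply either since it requires $\ell$ to be $(\G,F,e_\ell(q))$-adapted. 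The paper handles this by a short separate argument: when $e_0\neq e_\ell(q)$ one has $a=0$, $\CL_{e_\ell(q)}^\star(\G,F)=\emptyset$, so $\chi=0$ and the asserted congruence holds trivially mod $1$. Finally, your heuristic that ``large forces $\ell$ to exceed every $d$ with $a_d\neq 0$, hence $\ell\mid\Phi_d(q)$ only for $d=e_\ell(q)$'' misstates the definition; the identity $|\G^F|_\ell=\Phi_{e_\ell(q)}^a(q)_\ell$ in the non-degenerate case should be cited as \cite[Corollary 3.13(ii)]{Bro-Mal92}, as the paper does.
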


Next, we consider an application to mod $\ell$ homology decompositions. According to the work of Dwyer \cite{Dwy97}, there are three standard ways to reconstruct the classifying space of a finite group, up to mod $\ell$ homology, by gluing together classifying spaces of subgroups: namely the centraliser decomposition, the subgroup decomposition, and the normaliser decomposition. All three are guaranteed by a property known as \textit{ampleness}. Once the latter is established, it is often useful to ask whether any of these three decomposition is \textit{sharp}. This property yields a closed formula for the homology of the classifying space in terms of that of classifying spaces of subgroups. We refer the reader to Section \ref{sec:Homology decompositions} and the references therein. Sharpness was extensively studied by Grodal and Smith \cite{Gro-Smi06} and obtained for numerous families of subgroups. Thanks to Theorem \ref{thm:Main homotopy equivalence for ell-subgroups and e-split Levi}, we can show some of these properties for the family of $e_{\ell}(q)$-split Levi subgroups.

\begin{corA}
\label{cor:Main sharpness}
Suppose that $\ell\in\pi(\G,F)$ does not divide the order of $\z(\G)^F$. Then, the $\G^F$-simplicial complex $\Delta(\CL_{e_{\ell}(q)}^\star(\G,F))$ is ample and normaliser sharp (with respect to the prime $\ell$).
\end{corA}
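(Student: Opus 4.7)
The plan is to leverage Theorem~\ref{thm:Main homotopy equivalence for ell-subgroups and e-split Levi} and transfer the desired properties from the Brown complex $\Delta(\mathcal{S}_\ell^\star(\G^F))$, for which they are classical. Both ampleness and normaliser sharpness at the prime $\ell$ are invariants of the $\G^F$-equivariant homotopy type: ampleness says that the Borel construction induces an isomorphism in mod $\ell$ cohomology onto $B\G^F$, while normaliser sharpness says that the higher limits over the orbit category of the functor sending each orbit of simplices $\sigma$ to $H^*(BN_{\G^F}(\sigma);\mathbb{F}_\ell)$ vanish in positive degrees. It therefore suffices to establish both properties for $\Delta(\mathcal{S}_\ell^\star(\G^F))$ and then transport them across the equivariant equivalence provided by the theorem.

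For the Brown complex, ampleness follows from Brown's theorem \cite{Bro75} in the reformulation of Dwyer \cite{Dwy97}, which yields precisely the required mod $\ell$ cohomology isomorphism. Normaliser sharpness of $\Delta(\mathcal{S}_\ell^\star(G))$ for an arbitrary finite group $G$ was established by Dwyer \cite{Dwy97} and systematically developed by Grodal and Smith \cite{Gro-Smi06}. To transfer these properties across a $\G^F$-equivariant homotopy equivalence $X\simeq_{\G^F} Y$, I would argue as follows. Ampleness is immediate, since the Borel construction sends equivariant equivalences to ordinary homotopy equivalences. For normaliser sharpness, the key point is that such an equivalence induces a homotopy equivalence $X/\G^F\simeq Y/\G^F$ of orbit spaces together with a matching of stabilisers up to conjugacy in $\G^F$; thus the two normaliser decomposition diagrams are levelwise weakly equivalent and are therefore simultaneously sharp at $\ell$.

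The main obstacle, in my view, is not the logical structure of the argument, which is essentially formal once Theorem~\ref{thm:Main homotopy equivalence for ell-subgroups and e-split Levi} is in hand, but rather the bookkeeping needed to extract from \cite{Dwy97, Gro-Smi06} the exact statements used here and to verify that normaliser sharpness is genuinely a $\G^F$-equivariant homotopy invariant in the simplicial set-up of Dwyer. Once these technical points are in place, the corollary follows from a direct application of Theorem~\ref{thm:Main homotopy equivalence for ell-subgroups and e-split Levi}.
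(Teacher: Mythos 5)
Your overall strategy is exactly the paper's: establish the two properties for the Brown complex, note that both are invariants of the $\G^F$-homotopy type, and transfer them across Theorem~\ref{thm:Main homotopy equivalence for ell-subgroups and e-split Levi}. The references you reach for are compatible with those the paper uses (Corollary~\ref{cor:Ampleness} invokes \cite[Theorem 5.1.3, Proposition 5.6.7, Lemma 5.6.8]{Ben-Smi08} for ampleness; Corollary~\ref{cor:Sharpness} invokes \cite[Theorem 5.8.8]{Ben-Smi08} and, for homotopy invariance of normaliser sharpness, \cite[Corollary 7.2]{Gro02} and \cite[Theorem 5.8.11]{Ben-Smi08}).

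One caution on the step you flag as "bookkeeping": your sketch for why normaliser sharpness transfers is not accurate as stated. A $\G^F$-homotopy equivalence does \emph{not} yield a bijection between simplices nor a "matching of stabilisers up to conjugacy", so one cannot simply argue that the two normaliser decomposition diagrams are levelwise weakly equivalent. The correct argument, due to Grodal, reformulates the normaliser decomposition's Bousfield--Kan spectral sequence in terms of Bredon cohomology (equivalently, higher limits over the subdivision of the orbit category), a framework that is genuinely invariant under $G$-homotopy equivalence even though the indexing categories of the two diagrams are not themselves isomorphic. This is precisely the content of \cite[Corollary 7.2]{Gro02}, and it is what distinguishes normaliser sharpness from centraliser and subgroup sharpness, which fail to be homotopy invariants (a point the paper notes explicitly). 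So replace your heuristic with a citation of that result and the argument is complete and coincides with the paper's.
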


The above corollary shows that the mod $\ell$ homology of the classifying space of a finite reductive group is determined $e_{\ell}(q)$-locally, that is, by the normalisers of (proper) $e_{\ell}(q)$-split Levi subgroups. More generally, the homotopy equivalence of Theorem \ref{thm:Main homotopy equivalence for ell-subgroups and e-split Levi} allows us to transfer questions involving the $\ell$-structure of the group $\G^F$ to the more convenient language of generic Sylow theory and hence to apply powerful techniques from algebraic geometry. This provides a more conceptual explanation of a connection established in previous works on $\ell$-modular representation theory (see, for instance, \cite{Bro-Mal-Mic93}, \cite{Cab-Eng94}, \cite{Cab-Eng99}, and \cite{Kes-Mal15}). Our main motivation is to reformulate some important open conjectures in representation theory of finite groups in terms of $e_{\ell}(q)$-local structures. This is particularly explicative if we consider Alperin's Weight Conjecture in the form introduced by Kn\"orr and Robinson \cite{Kno-Rob89}. According to work of Webb \cite{Web87} and Thev\'enaz \cite{The93} the latter can be stated for a given finite group $G$ using the \textit{reduced Lefschetz invariant} $\tilde{\Lambda}_G$ and the \textit{equivariant Euler characteristic} $\chi_G$ respectively (see Section \ref{sec:AWC} for more details). Since these are invariant under $G$-homotopy equivalences we can use Theorem \ref{thm:Main homotopy equivalence for ell-subgroups and e-split Levi} to deduce the following corollary.

\begin{corA}
\label{cor:Main AWC}
Let $(\G,F)$ be a finite reductive group and consider a prime number $\ell\in\pi(\G,F)$ (see Definition \ref{def:Prime condition}) not dividing the order of $\z(\G)^F$. Then, the block-free version of the Kn\"orr--Robinson reformulation of Alperin's Weight Conjecture holds for $\G^F$ if and only if
\[k^0\left(\G^F\right)=-l_\ell\left(\tilde{\Lambda}_{\G^F}\left(\Delta\left(\CL_{e_{\ell}(q)}^\star(\G,F)\right)\right)\right)\]
if and only if
\[k\left(\G^F\right)-k^0\left(\G^F\right)=\chi_{\G^F}\left(\Delta\left(\CL_{e_{\ell}(q)}^\star(\G,F)\right)\right)\]
where the functions $k$, $k^0$ and $l_\ell$ count the number of irreducible ordinary characters, $\ell$-defect zero characters, and $\ell$-Brauer characters as explained in Section \ref{sec:AWC}.
\end{corA}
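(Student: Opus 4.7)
The approach is to reduce directly to the Kn\"orr--Robinson reformulations phrased for the Brown complex itself and then transport the resulting identities along the $\G^F$-equivariant homotopy equivalence provided by Theorem \ref{thm:Main homotopy equivalence for ell-subgroups and e-split Levi}. Under the hypotheses of the corollary, that theorem immediately yields
\[\Delta\bigl(\mathcal{S}_\ell^\star(\G^F)\bigr)\simeq_{\G^F}\Delta\bigl(\CL_{e_\ell(q)}^\star(\G,F)\bigr),\]
so the task reduces to showing that the two numerical quantities on the right-hand sides of the statement depend only on the $\G^F$-equivariant homotopy type of the complex.

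First, I would invoke the Kn\"orr--Robinson reformulation of AWC together with its reinterpretations by Webb \cite{Web87} and Thev\'enaz \cite{The93}, recalled in Section \ref{sec:AWC}, applied to the finite group $G=\G^F$. These show that the block-free form of the conjecture for $G$ is equivalent to each of
\[k^0(G)=-l_\ell\bigl(\tilde{\Lambda}_G(\Delta(\mathcal{S}_\ell^\star(G)))\bigr) \quad\text{and}\quad k(G)-k^0(G)=\chi_G\bigl(\Delta(\mathcal{S}_\ell^\star(G))\bigr).\]

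Second, I would appeal to the classical fact that the reduced Lefschetz invariant $\tilde{\Lambda}_G$, viewed as an element of the Burnside ring of $G$, and the equivariant Euler characteristic $\chi_G$ are invariants of $G$-homotopy type (both are built from alternating sums over $G$-orbits of simplices and hence pass through $G$-homotopy equivalences). Applying this to the equivalence above gives
\[\tilde{\Lambda}_{\G^F}\bigl(\Delta(\mathcal{S}_\ell^\star(\G^F))\bigr)=\tilde{\Lambda}_{\G^F}\bigl(\Delta(\CL_{e_\ell(q)}^\star(\G,F))\bigr),\]
and similarly for $\chi_{\G^F}$. Substituting these identities into the Webb--Thev\'enaz formulas produces the two equivalences claimed in the corollary.

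The only substantive step is already contained in Theorem \ref{thm:Main homotopy equivalence for ell-subgroups and e-split Levi}; the remaining verification, that the Burnside-ring-valued Lefschetz invariant and the equivariant Euler characteristic descend to invariants of $G$-homotopy type, is standard and poses no genuine difficulty. Accordingly, the main conceptual point to check is that the Webb/Thev\'enaz formalism is stated with sufficient generality to apply to an arbitrary finite $G$-CW complex (and not only to the Brown complex), so that the homotopy transport is legitimate. Once this is ensured, the corollary follows.
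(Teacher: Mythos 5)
Your proposal is correct and follows essentially the same route as the paper: the paper proves Corollary \ref{cor:Main AWC} by first recalling the Webb and Thévenaz reformulations \eqref{eq:Alperin Webb} and \eqref{eq:Alperin Thevenaz} of the (block-free) Knörr--Robinson conjecture, noting (with citations) that $\tilde{\Lambda}$ and $\chi_G$ are invariants of $G$-homotopy type, and then transporting along the $\G^F$-homotopy equivalence of Theorem \ref{thm:Main homotopy equivalence for ell-subgroups and e-split Levi}. The only point you gloss slightly is the justification that both identities are genuinely equivalent to the Knörr--Robinson reformulation for a fixed group, which the paper attributes to \cite[Corollary 4.5]{Kno-Rob89}; otherwise the argument matches.
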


We now come to Dade's Conjecture \cite[Conjecture 6.3]{Dad92}. A long term plan for the solution of this conjecture was initiated by the author in \cite{Ros-Generalized_HC_theory_for_Dade} relying on the reduction theorem of Sp\"ath \cite{Spa17} and inspired by old ideas of Brou\'e, Fong and Srinivasan \cite{Bro-Fon-Sri1}. In particular, in \cite[Conjecture C]{Ros-Generalized_HC_theory_for_Dade} the author introduced a version of Dade's Conjecture for finite reductive groups adapted to generalised Harish-Chandra theory by using the $e_{\ell}(q)$-local structures arising from the simplicial complex of $e_{\ell}(q)$-split Levi subgroups. Now, in order to prove Dade's Conjecture we need to show that:
\begin{itemize}
\item \cite[Conjecture C]{Ros-Generalized_HC_theory_for_Dade} holds for finite reductive groups; and
\item \cite[Conjecture C]{Ros-Generalized_HC_theory_for_Dade} implies Dade's Conjecture.
\end{itemize}
The first question was reduced to certain extendibility conditions in \cite{Ros-Clifford_automorphisms_HC} and proved for unipotent characters and groups of types ${\bf A}$, ${\bf B}$, and ${\bf C}$ in \cite{Ros-Unip}. The second question, was answered affirmatively in \cite[Proposition 7.10]{Ros-Generalized_HC_theory_for_Dade} under the assumption that $\ell$ is large. Our next theorem extends this result to primes $\ell$ satisfying the hypotheses of Theorem \ref{thm:Main homotopy equivalence for ell-subgroups and e-split Levi}. Unfortunately, in this case the homotopy equivalence constructed above is not enough to obtain the desired result. The proof relies instead on the construction of a series of \textit{alternations} (see Definition \ref{def:Alternations}) used to prove a cancellation theorem (see Theorem \ref{thm:Cancellation theorem for Dade}) that reduces the study of Dade's Conjecture from the Brown complex to the simplicial complex of \textit{$e_{\ell}(q)$-closed abelian $\ell$-subgroups} (see Definition \ref{def:e-closure}). The latter is then identified with the simplicial complex of $e_{\ell}(q)$-split Levi subgroups (see Proposition \ref{prop:Replacement theorem for Dade}) and this ultimately leads to the following theorem.

\begin{theoA}
\label{thm:Main replacement theorem for Dade}
Let $(\G,F)$ be a finite reductive group with $\G$ simple of simply connected type. Consider an odd prime $\ell$ good for $\G$ and not dividing the order of $\z(\G)^F$, with $\ell\neq 3$ if $(\G,F)$ has rational type ${^3{\bf D}_4}$. Then \cite[Conjecture C]{Ros-Generalized_HC_theory_for_Dade} and Dade's Conjecture (see \cite[Conjecture 6.3]{Dad92}) are equivalent for every Brauer $\ell$-block $B$ with non trivial defect and every non-negative integer $d$.
\end{theoA}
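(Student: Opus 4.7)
The strategy is to compare the two alternating sums appearing in Dade's Conjecture and in \cite[Conjecture C]{Ros-Generalized_HC_theory_for_Dade}. The first is indexed by simplices of the Brown complex $\Delta(\mathcal{S}_\ell^\star(\G^F))$, whereas the second is indexed by simplices of $\Delta(\CL_{e_{\ell}(q)}^\star(\G,F))$. The homotopy equivalence of Theorem \ref{thm:Main homotopy equivalence for ell-subgroups and e-split Levi} is a priori too coarse to transport the block- and defect-sensitive character counts that enter Dade's sum, so a direct appeal to that theorem does not suffice: the matching must be refined so as to keep track of the pair $(B,d)$ at every step.

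First I would reformulate Dade's alternating sum as a sum over chains in the Brown complex, and then construct explicit alternations, in the sense of Definition \ref{def:Alternations}, that cancel out the contributions of every chain passing through an $\ell$-subgroup which is either non-abelian or fails to be $e_{\ell}(q)$-closed in the sense of Definition \ref{def:e-closure}. Each such offending chain would be paired with a companion chain obtained by adjoining or removing a canonical subgroup, for example a suitable abelian section together with its $e_{\ell}(q)$-closure; the decisive point is that the normalisers, and hence the Dade counts attached to the fixed block $B$ and defect $d$, agree along the pair so that the two contributions cancel with opposite signs. The resulting identity is the Cancellation Theorem \ref{thm:Cancellation theorem for Dade}, which rewrites Dade's alternating sum as one indexed purely by chains of $e_{\ell}(q)$-closed abelian $\ell$-subgroups.

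Next I would apply Proposition \ref{prop:Replacement theorem for Dade} to identify, $\G^F$-equivariantly, the simplicial complex of $e_{\ell}(q)$-closed abelian $\ell$-subgroups with $\Delta(\CL_{e_{\ell}(q)}^\star(\G,F))$, by sending such an $\ell$-subgroup $A$ to (the connected component of) its centraliser $\c_\G(A)$. The assumptions that $\ell$ is odd, good for $\G$, coprime to $|\z(\G)^F|$, and different from $3$ in type ${^3{\bf D}_4}$ are precisely what is needed to ensure that these centralisers are $F$-stable Levi subgroups of the appropriate $e_{\ell}(q)$-split type and that normalisers together with the Brauer block data match under the bijection. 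Combining the Replacement Proposition with the Cancellation Theorem transforms the rewritten Dade sum into the sum defining \cite[Conjecture C]{Ros-Generalized_HC_theory_for_Dade}, which yields the asserted equivalence for every block $B$ of non-trivial defect and every non-negative integer $d$.

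The main obstacle is the cancellation step: the alternations must be performed in a way that respects the block $B$ and the defect $d$ \emph{on the nose}, not merely at the level of the underlying subgroup lattice. This requires carefully controlling how $\ell$-blocks induce between normalisers of paired chains and how the defects of the relevant characters are detected locally, which in turn relies on Clifford-theoretic input and on the local block theory of finite reductive groups; it is here that the arithmetic hypotheses on $\ell$ are indispensable. Once this reduction to $e_{\ell}(q)$-closed abelian $\ell$-subgroups is established, the identification with $e_{\ell}(q)$-split Levi subgroups is a more geometric statement handled by the Replacement Proposition, and the conclusion then drops out.
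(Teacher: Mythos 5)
Your strategy tracks the paper's quite closely through the cancellation step: the paper indeed constructs the alternations you describe (Lemma \ref{lem:abelian and all subgroups}, Proposition \ref{prop:weakly e-closed and abelian subgroups}, Proposition \ref{prop:e-closed and weakly e-closed subgroups}), stitches them together into Theorem \ref{thm:Cancellation theorem for Dade}, and then identifies chains of $e_{\ell}(q)$-closed abelian $\ell$-subgroups with chains of $e_{\ell}(q)$-split Levi subgroups. (The identification is Lemma \ref{lem:e-split and e-closed subgroups}, not Proposition \ref{prop:Replacement theorem for Dade}, which is the statement of the rewritten sum; note also that the bijection lands in $\Delta(\CL_e(\G,F))$ including the trivial chain $\{\G\}$, not in $\Delta(\CL_e^\star(\G,F))$.) The reason the rewriting respects the block- and defect-graded counts is exactly the point you single out: the alternations preserve stabilisers, so Lemma \ref{lem:Alternation and G-stable functions} transports any $\G^F$-stable function, in particular $\sigma\mapsto\k^d(B_\sigma,\lambda)$.

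However, there is a genuine gap at the end of your argument. After the cancellation and replacement steps, Dade's sum becomes
\[
\sum_{\rho \in \Delta(\CL_e(\G,F))/\G^F}(-1)^{|\rho|}\,\k^d(B_\rho,\lambda)=0,
\]
and separating the trivial chain $\rho_0=\{\G\}$ (which contributes $\k^d(B,\lambda)$) gives
\[
\k^d(B,\lambda)=\sum_{\rho \in \Delta(\CL_e^\star(\G,F))/\G^F}(-1)^{|\rho|+1}\,\k^d(B_\rho,\lambda).
\]
This is not yet Conjecture \ref{conj:Dade reductive}, which reads $\k^d(B,\lambda)=\k^d_{\rm c}(B,\lambda)+\sum_\rho(-1)^{|\rho|+1}\k^d(B_\rho,\lambda)$. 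You still owe the vanishing of the $e$-cuspidal term $\k^d_{\rm c}(B,\lambda)$, and this is precisely where the hypothesis that $B$ has non-trivial defect is used: Lemma \ref{lem:Defect zero characters and cuspidality} shows that every $e_{\ell}(q)$-cuspidal character of $\G^F$ has $\ell$-defect zero, so a block of positive defect contains none and $\k^d_{\rm c}(B,\lambda)=0$. That lemma is why the proof routes through the more general Theorem \ref{thm:Replacement theorem for Dade}, whose hypotheses include $\ell\in\pi(\G^*,F^*)$ and $\ell\nmid|\z(\G^*)^{F^*}|$; the simply connected assumption on $\G$ makes $\G^*$ adjoint, hence $\z(\G^*)=1$, so those dual-side conditions hold automatically. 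Your proposal never engages with the $\k^d_{\rm c}$ term or with the dual group, so the final step does not close as written.
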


A similar argument can be used to prove a cancellation theorem (see Theorem \ref{thm:Cancellation theorem for CTC}) for the Character Triple Conjecture \cite[Definition 6.3]{Spa17}. This is then used to show that the inductive condition for Dade's Conjecture \cite[Definition 6.7]{Spa17} follows from \cite[Conjecture D]{Ros-Generalized_HC_theory_for_Dade} and leads to our final result.

\begin{theoA}
\label{thm:Main replacement theorem for inductive Dade's condition}
Let $(\G,F)$ be a finite reductive group with $\G$ simple of simply connected type and such that $\G^F$ is the universal covering group of $\G^F/\z(\G)^F$. Consider a prime number $\ell\geq 5$ good for $\G$ and not dividing the order of $\z(\G)^F$. If \cite[Conjecture D]{Ros-Generalized_HC_theory_for_Dade} holds for a Brauer $\ell$-block $B$ with non-trivial defect and a non-negative integer $d$, then the inductive condition for Dade's Conjecture (see \cite[Definition 6.7]{Spa17}) holds for $B$ and $d$. 
\end{theoA}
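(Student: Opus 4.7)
The plan is to parallel the proof of Theorem \ref{thm:Main replacement theorem for Dade}, but lifting from character counts to character triples, so as to match the stronger data required by the inductive condition \cite[Definition 6.7]{Spa17}. First I would recall that Späth's inductive condition for $(B,d)$ is, essentially, the Character Triple Conjecture \cite[Definition 6.3]{Spa17} for $\G^F$ with the additional requirement that the bijection between character triples attached to chains of $\ell$-subgroups is equivariant under $\aut(\G^F)_B$ and compatible with central isomorphisms of character triples. The starting point is therefore to reformulate the CTC for $(B,d)$ as a statement about an $\aut(\G^F)_B$-equivariant bijection indexed by (radical) chains in the Brown poset $\mathcal{S}_\ell^\star(\G^F)$.

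Next I would apply the cancellation theorem for CTC (Theorem \ref{thm:Cancellation theorem for CTC}), which is the CTC-level analogue of Theorem \ref{thm:Cancellation theorem for Dade} and is obtained by running the alternations of Definition \ref{def:Alternations} at the level of character triples. Under the hypotheses $\ell\geq 5$, $\ell$ good for $\G$, and $\ell\nmid |\z(\G)^F|$, these alternations pair off chains of $\ell$-subgroups in $\aut(\G^F)_B$-equivariant fashion and induce central isomorphisms of the associated character triples, so that the two sides of CTC cancel on all chains except those consisting of \emph{$e_{\ell}(q)$-closed} abelian $\ell$-subgroups (Definition \ref{def:e-closure}). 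This reduces CTC for $(B,d)$ on $\G^F$ to an equivalent CTC-style statement indexed by the simplicial complex of $e_{\ell}(q)$-closed abelian $\ell$-subgroups.

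Having localised the problem to this smaller complex, I would then invoke the identification already used in the proof of Theorem \ref{thm:Main replacement theorem for Dade} (see Proposition \ref{prop:Replacement theorem for Dade}): the poset of $e_{\ell}(q)$-closed abelian $\ell$-subgroups is $\aut(\G^F)_B$-equivariantly and block-preservingly identified with the poset $\CL_{e_{\ell}(q)}^\star(\G,F)$ of proper $e_{\ell}(q)$-split Levi subgroups, with normalisers and block covers matching in the way required for CTC. Under this identification, the reduced CTC coincides precisely with the assertion of \cite[Conjecture D]{Ros-Generalized_HC_theory_for_Dade}. The assumption that $\G^F$ is the universal covering group of $\G^F/\z(\G)^F$ is what lets me conclude that the central isomorphisms produced by Conjecture D lift to the central isomorphisms demanded by \cite[Definition 6.7]{Spa17}, because in that situation the relevant Schur multipliers and the action of diagonal and graph automorphisms on projective representations behave as required.

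The hardest step is the cancellation theorem for CTC itself. Passing from Dade's numerical conjecture to CTC means that each elementary alternation must produce not merely a sign-reversing pairing of chains but a genuine $\aut(\G^F)_B$-equivariant central isomorphism between the character triples attached to a chain and to its partner, and this has to hold uniformly over all chains of $\ell$-subgroups. Verifying equivariance under $\aut(\G^F)_B$ and the compatibility with central isomorphisms is where the sharper hypothesis $\ell\geq 5$ is used (in place of the odd primes allowed in Theorem \ref{thm:Main replacement theorem for Dade}); it rules out the small-prime pathologies in the action of automorphisms on Sylow $\ell$-subgroups of relevant local subgroups that would otherwise obstruct the construction of such isomorphisms. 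Once the cancellation theorem is established in this stronger form, the remainder of the argument is a direct translation through the replacement proposition into the language of \cite[Conjecture D]{Ros-Generalized_HC_theory_for_Dade}.
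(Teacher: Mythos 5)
Your overall architecture is right: cancel down to chains of $e_{\ell}(q)$-closed abelian $\ell$-subgroups via Theorem~\ref{thm:Cancellation theorem for CTC}, transport them to chains of $e_{\ell}(q)$-split Levi subgroups via Lemma~\ref{lem:e-split and e-closed subgroups} / Proposition~\ref{prop:Replacement theorem for CTC}, invoke Conjecture~D of \cite{Ros-Generalized_HC_theory_for_Dade}, and then descend from $\G^F$ to the quasi-simple quotient $\G^F/\z(\G)^F$ using the universal-covering hypothesis. However, there are two places where your account is factually off, and one of them is a genuine gap.

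First, your explanation of the role of $\ell\geq 5$ is wrong. You attribute it to the construction of the alternations and the equivariance of the resulting central isomorphisms (``small-prime pathologies in the action of automorphisms on Sylow $\ell$-subgroups''). In fact Theorem~\ref{thm:Cancellation theorem for CTC} is proved in the paper for any $\ell\in\pi(\G,F)$ --- the alternations of Lemma~\ref{lem:abelian and all subgroups}, Proposition~\ref{prop:weakly e-closed and abelian subgroups}, and Proposition~\ref{prop:e-closed and weakly e-closed subgroups} are purely group-theoretic and require no bound of the form $\ell\geq 5$. The bound $\ell\geq 5$ enters because the passage from Conjecture~D (stated in terms of quadruples $(\sigma,\M,\ab(\mu),\vartheta)$ involving $e$-cuspidal pairs) to a CTC-type bijection on pairs $(\sigma,\vartheta)$ relies on \cite[Condition~3.3]{Ros-Generalized_HC_theory_for_Dade}, a statement about irreducible constituents of Lusztig induction in the spirit of Cabanes--Enguehard, and \cite[Corollary~3.7]{Ros-Generalized_HC_theory_for_Dade} only verifies that condition for simply connected groups when $\ell\geq 5$ is good.

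Second, and related, you treat the identification ``reduced CTC $=$ Conjecture~D'' as immediate (``coincides precisely''), but it is not: Conjecture~D is indexed on $\CL_e^d(B)$ (quadruples including an $e$-cuspidal pair and its linear twists), while the output of the cancellation-and-transport steps is indexed on $\C_e^d(B)$ (pairs $(\rho,\chi)$ of $e$-chains and characters). Bridging these requires attaching to each $(\rho,\chi)$ a well-defined $e$-cuspidal pair $(\M,\mu)\in\CP_e^\star(B)$ with $\M\leq\L$ (the smallest term of $\rho$) such that $\chi$ lies over the $e$-Harish-Chandra series $\E(\L^F,(\M,\mu))$; this is exactly what \cite[Lemma~5.5]{Ros-Generalized_HC_theory_for_Dade} (under Condition~3.3) provides, together with the fact that $\CP_e(B)=\CP_e^\star(B)$ because $B$ has non-trivial defect (via Lemma~\ref{lem:Defect zero characters and cuspidality}). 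This is the content of Theorem~\ref{thm:Replacement theorem for CTC} in the paper, and it is not a formal consequence of the material you cite. Without this step, the argument does not close.

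The final descent to the quasi-simple group is essentially correct in spirit but compressed: after obtaining the CTC bijection for $\G^F\unlhd \G^F\rtimes\aut_\mathbb{F}(\G^F)$, one applies \cite[Lemma~3.4]{Spa17} to identify the common kernel $Z$ of the restricted characters on $\z(\G)^F$, notes that $|Z|$ is prime to $\ell$, passes the $\G^F$-block isomorphisms through the quotient by $Z$ via \cite[Corollary~4.5]{Spa17}, observes that $\aut_\mathbb{F}(\G^F)$ realises $\aut(\G^F)$ (\cite[1.15]{GLS}), and finally invokes \cite[Lemma~7.6]{Ros-Generalized_HC_theory_for_Dade} to translate this into \cite[Definition~6.7]{Spa17}. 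Your sketch should make this chain explicit rather than appealing to Schur multipliers and graph automorphisms, which is not how the paper's argument runs.
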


The paper is organised as follows. In Section \ref{sec:Background} we introduce the main definitions and collect various results on simplicial complexes, equivariant homotopy equivalences, generic Sylow theory, and centralisers of abelian $\ell$-subgroups in finite reductive groups. Here, we also introduce the notion of \textit{alternation} (see Definition \ref{def:Alternations}) inspired by the work of Kn\"orr and Robinson \cite{Kno-Rob89}. Section \ref{sec:Homotopy equivalence} is devoted to the proof of Theorem \ref{thm:Main homotopy equivalence for ell-subgroups and e-split Levi} and its applications: Corollary \ref{cor:Main generic homological Sylow}, Corollary \ref{cor:Main sharpness}, and Corollary \ref{cor:Main AWC}. We also show that all these results can be stated by replacing the simplicial complex of $e_{\ell}(q)$-split Levi subgroups with that of $\Phi_{e_{\ell}(q)}$-tori (see Proposition \ref{prop:Homotopy equivalence for e-tori}). In Section \ref{sec:Adapting ell-groups to e-split Levi} we introduce and study the notion of \textit{$e$-closure} (see Definition \ref{def:e-closure}) and of \textit{weak $e$-closure} (see Definition \ref{def:Weak e-closure}). This is then used to construct alternations (see Proposition \ref{prop:weakly e-closed and abelian subgroups} and Proposition \ref{prop:e-closed and weakly e-closed subgroups}) inside the Brown complex of a finite reductive group and reduce the study to the $e_{\ell}(q)$-closed abelian $\ell$-subgroups. This leads us, in Section \ref{sec:Dade and CTC}, to cancellation theorems for Dade's Conjecture (see Theorem \ref{thm:Cancellation theorem for Dade}) and the Character Triple Conjecture (see Theorem \ref{thm:Cancellation theorem for CTC}) thanks to which we can state the conjectures by only using chains of $e_{\ell}(q)$-closed abelian $\ell$-subgroups. Finally, after showing that the latter correspond to chains of $e_{\ell}(q)$-split Levi subgroups (see Lemma \ref{lem:e-split and e-closed subgroups}), we are able to recover Dade's Conjecture and its inductive condition, or better the Character Triple Conjecture, from \cite[Conjecture C and Conjecture D]{Ros-Generalized_HC_theory_for_Dade}. This finally yields Theorem \ref{thm:Main replacement theorem for Dade} and Theorem \ref{thm:Main replacement theorem for inductive Dade's condition}.

\section{Background material}
\label{sec:Background}

\subsection{Subgroup complexes}
\label{sec:Notations chains}

For every poset $\mathcal{X}$ we form a simplicial complex $\Delta(\mathcal{X})$ whose $n$-simplices are the totally ordered chains $\sigma$ of cardinality $n+1$ consisting of elements of $\mathcal{X}$, that is, the chains of the form $\sigma=\{x_0<x_1<\dots<x_n\}$ with $x_i\in\mathcal{X}$. The dimension of the simplex $\sigma$ is defined as $|\sigma|:=n$ while the dimension of the simplicial complex $\Delta(X)$ is the largest dimension of one of its simplices. When considering a simplicial complex of the form $\Delta(\mathcal{X})$ we will use interchangeably the terms \textit{chain} and simplex as well as the terms \textit{length} and dimension. Now, if $\phi:\mathcal{X}\to\mathcal{Y}$ is a map of posets (i.e. order preserving), then we get an induced map of simplicial complexes $\Delta(\phi):\Delta(\mathcal{X})\to\Delta(\mathcal{Y})$. In particular, if $G$ is a finite group acting on $\mathcal{X}$ via poset automorphisms, then $\Delta(\mathcal{X})$ is a $G$-simplicial complex as defined in \cite[Definition 6.1.1]{Ben98}. In this situation we say that $\mathcal{X}$ is a $G$-poset. Furthermore a map of $G$-posets $\phi:\mathcal{X}\to \mathcal{Y}$ is a map of posets that is additionally $G$-equivariant. Observe that for every $x\in\mathcal{X}$ the subposet $\mathcal{X}_{\leq x}:=\{x'\in\mathcal{X}\mid x'\leq x\}$ induces a subcomplex $\Delta(\mathcal{X}_{\leq x})$. Furthermore, if $G$ acts on $\mathcal{X}$, then $\Delta(\mathcal{X}_{\leq x})$ is a $G_x$-simplicial complex where $G_x$ denotes the stabiliser of $x$ in $G$. This remark applies to similarly defined subposets $\mathcal{X}_{<x}$, $\mathcal{X}_{\geq x}$ etc.

In this paper we are mostly interested in the case where $\mathcal{X}$ is a poset of subgroups ordered by inclusion. Fix a prime $\ell$ and let $Z$ be a central $\ell$-subgroup of a finite group $G$. We denote by $\mathcal{S}_\ell^\star(G,Z)$ the set of all those $\ell$-subgroups $P$ of $G$ that satisfy $Z<P$ and by $\mathcal{S}_\ell(G,Z)$ the set of those $P$ that satisfy $Z\leq P$. With this notation, we have $\mathcal{S}_\ell(G,Z)=\mathcal{S}_\ell^\star(G,Z)\cup \{Z\}$. These sets of $\ell$-subgroups have a natural structure of $G$-posets under the conjugacy action of $G$ and the usual subgroup inclusion. As explained above, we obtain a $G$-simplicial complex $\Delta(\mathcal{S}_\ell^\star(G,Z))$ consisting of the chains of $\ell$-subgroups $\sigma=\{P_0<P_1<\dots<P_n\}$ with $Z<P_i$. Next, observe that $Z$ is the minimum in the poset $\mathcal{S}_\ell(G,Z)$. In this case, we introduce the convention that every chain $\sigma$ belonging to the simplicial complex $\Delta(\mathcal{S}_\ell(G,Z))$ has $Z$ as its starting term, that is, $\sigma=\{Z=P_0<P_1<\dots<P_n\}$. The complex of $\ell$-chains $\Delta(\mathcal{S}_\ell(G,Z))$ is of fundamental importance in the study of the local-global counting conjectures and has been studied in \cite{Kno-Rob89}, \cite{Dad92} and \cite{Dad94}. The reason why we consider chains starting with a fixed $\ell$-subgroup is described in the discussion given in \cite[p.193]{Dad92}. When $Z=1$, we denote $\mathcal{S}_\ell(G,1)$ and $\mathcal{S}_\ell^\star(G,1)$ simply by $\mathcal{S}_\ell(G)$ and $\mathcal{S}_\ell^\star(G)$ respectively. The simplicial complex $\Delta(\mathcal{S}_\ell^\star(G))$ was first introduced in \cite{Bro75} and \cite{Bro76} and is known as the \textit{Brown complex}. It is important to observe that while the Brown complex is usually denoted by $\mathcal{S}_\ell(G)$, in this paper we use (for technical resons) the notation introduced above and according to which $\mathcal{S}_\ell(G)$ denotes the set of all $\ell$-subgroups of $G$, including the trivial subgroup.

As it is well-known, in many situations one can replace the set of $\ell$-subgroups of $G$ with certain subsets of $\ell$-subgroups such as the set of elementary abelian $\ell$-subgroups or the set of $\ell$-radical $\ell$-subgroups (see, for instance, \cite[Section 2]{Qui78}, \cite{Bou84} and \cite[Proposition 3.3]{Kno-Rob89}). For the purpose of this paper, we will only consider the subset of abelian $\ell$-subgroups. More precisely, we denote by $\ab_{\ell,Z}(G,Z)$ the subset of $\mathcal{S}_\ell(G,Z)$ consisting of those $\ell$-subgroups $P$ of $G$ containing $Z$ and such that $P/Z$ is abelian. We also denote by $\ab_\ell(G,Z)$ the subset of $\mathcal{S}_\ell(G,Z)$ consisting of those $\ell$-subgroups $P$ containing $Z$ and such that $P$ is abelian. Clearly, $\ab_\ell(G,Z)$ is contained in $\ab_{\ell,Z}(G,Z)$ and $Z$ belongs to $\ab_\ell(G,Z)$ since it is abelian. If we consider the same definitions but only considering $\ell$-subgroups $P$ strictly containing $Z$, then we obtain $\ab_{\ell,Z}^\star(G,Z):=\ab_{\ell,Z}(G,Z)\setminus\{Z\}$ and $\ab_\ell^\star(G,Z):=\ab_\ell(G,Z)\setminus\{Z\}$. As before we form the $G$-simplicial complexes $\Delta(\ab_\ell^\star(G,Z))$ and $\Delta(\ab_{\ell,Z}^\star(G,Z))$ corresponding to the $G$-posets $\ab_\ell^\star(G,Z)$ and $\ab_{\ell,Z}^\star(G,Z)$ respectively. Notice that also in this case the $\ell$-subgroup $Z$ is the minimum of the posets $\ab_\ell(G,Z)$ and $\ab_{\ell,Z}(G,Z)$, and therefore we conventionally assume the $\ell$-chains of $\Delta(\ab_\ell(G,Z))$ and $\Delta(\ab_{\ell,Z}(G,Z))$ to start with the term $Z$. When $Z=1$, we simplify the above notation and just write $\ab_\ell(G,1)=\ab_\ell(G)$ and similarly for the other posets. We mention that, although Quillen originally considered the set of elementary abelian $\ell$-subgroups, for the purpose of this paper it is necessary to consider the larger set of all abelian $\ell$-subgroups. The reason for this choice will become apparent in Section \ref{sec:Alternations}.

\subsection{Equivariant homotopy equivalences}
\label{sec:Homotopy}

We refer the reader to \cite{The-Web91} and \cite[Section 6.4]{Ben98} for the main definitions and results on the equivariant homotopy theory of posets. In particular, see \cite[Definition 6.4.1]{Ben98} for the definition of \textit{$G$-homotopy equivalence} that we denote here by $\simeq_G$. In what follows, we say that a $G$-poset $\mathcal{X}$ is \textit{$G$-contractible} if the simplicial complex $\Delta(\mathcal{X})$ is $G$-homotopy equivalent to the trivial $G$-simplicial complex (with trivial $G$-action). We start with the following result due to Quillen and extended by Th\'evenaz and Webb.

\begin{lem}[{{\cite[Proposition 1.6]{Qui78}, \cite[Theorem 1]{The-Web91}}}]
\label{lem:Criterion for G-homotopy}
Let $G$ be a finite group and consider two $G$-posets $\mathcal{X}$ and $\mathcal{Y}$. If $\phi:\mathcal{X}\to\mathcal{Y}$ is a map of $G$-posets such that either
\begin{enumerate}
\item $\phi^{-1}(\mathcal{Y}_{\leq y})$ is $G_y$-contractible for every $y\in\mathcal{Y}$, or
\item $\phi^{-1}(\mathcal{Y}_{\geq y})$ is $G_y$-contractible for every $y\in\mathcal{Y}$,
\end{enumerate}
then the induced map of $G$-simplicial complexes
\[\Delta(\phi):\Delta\left(\mathcal{X}\right)\to\Delta\left(\mathcal{Y}\right)\]
is a $G$-homotopy equivalence.
\end{lem}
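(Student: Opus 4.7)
The plan is to reduce the equivariant statement to the classical non-equivariant Quillen fiber theorem applied to fixed-point subposets. By passing to the opposite poset, condition (ii) is transformed into condition (i), so it suffices to treat case (i).

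I would first invoke the standard characterisation of $G$-homotopy equivalences between $G$-simplicial complexes (Bredon's criterion, as used throughout \cite{The-Web91}): a $G$-map $f : X \to Y$ is a $G$-homotopy equivalence if and only if, for every subgroup $H \leq G$, the restriction $f^H : X^H \to Y^H$ to $H$-fixed subcomplexes is an ordinary homotopy equivalence. The problem is thus reduced to showing that $\Delta(\phi)^H$ is a homotopy equivalence for each $H \leq G$.

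Next, I would use the elementary but essential fact that fixed points commute with the nerve construction for posets: for any $G$-poset $\mathcal{X}$ one has a canonical identification
\[\Delta(\mathcal{X})^H = \Delta\left(\mathcal{X}^H\right),\]
where $\mathcal{X}^H = \{x\in\mathcal{X}\mid h\cdot x = x\text{ for all }h\in H\}$. Consequently $\Delta(\phi)^H = \Delta(\phi^H)$, where $\phi^H : \mathcal{X}^H \to \mathcal{Y}^H$ is the restriction of $\phi$. Quillen's Theorem A for posets \cite[Proposition 1.6]{Qui78} then reduces the claim to verifying, for every $y \in \mathcal{Y}^H$, that the fibre $(\phi^H)^{-1}\bigl((\mathcal{Y}^H)_{\leq y}\bigr)$ is contractible.

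To complete this last step, I would observe that any $y\in \mathcal{Y}^H$ satisfies $H\leq G_y$, so $H$ acts on the $G_y$-poset $\phi^{-1}(\mathcal{Y}_{\leq y})$, and a direct identification yields
\[(\phi^H)^{-1}\left((\mathcal{Y}^H)_{\leq y}\right) = \phi^{-1}(\mathcal{Y}_{\leq y})^H.\]
By hypothesis $\phi^{-1}(\mathcal{Y}_{\leq y})$ is $G_y$-contractible; unfolding this via Bredon's criterion once more, every fixed subcomplex under a subgroup of $G_y$, in particular under $H$, is contractible. Applying Quillen's fibre theorem to $\phi^H$ gives the desired homotopy equivalence, and the proof is complete. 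The only real subtlety — and the step I would double-check carefully — is the formal but delicate bookkeeping that $\Delta(\cdot)^H = \Delta((\cdot)^H)$ and that the preimage construction commutes with fixed points; everything else is a direct unfolding of definitions combined with the non-equivariant Quillen theorem applied subgroup by subgroup.
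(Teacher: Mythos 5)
The paper does not prove this lemma; it simply cites Quillen's Proposition 1.6 and Th\'evenaz--Webb's Theorem 1, and your reconstruction is exactly the standard argument given in the latter reference: reduce to the Bredon criterion for $G$-homotopy equivalences, identify $\Delta(\mathcal{X})^H = \Delta(\mathcal{X}^H)$ and $(\phi^H)^{-1}\big((\mathcal{Y}^H)_{\leq y}\big) = \phi^{-1}(\mathcal{Y}_{\leq y})^H$, and apply Quillen's fibre theorem subgroup by subgroup. Your proof is correct, including the observation that $H \leq G_y$ whenever $y \in \mathcal{Y}^H$, which is exactly why the $G_y$-contractibility hypothesis suffices.
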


The above lemma will be our main tool to construct $G$-homotopy equivalences of simplicial complexes. This will be used together with a standard result that ensures contractibility. A $G$-poset $\mathcal{X}$ is called \textit{canonically $G$-contractible} if there exists an element $x_0\in\mathcal{X}^G$ and a map of $G$-posets $\phi:\mathcal{X}\to\mathcal{X}$ such that $x\leq\phi(x)\geq x_0$ for every $x\in \mathcal{X}$. Here $\mathcal{X}^G$ denotes the subposet of $G$-fixed elements.

\begin{lem}
\label{lem:Criterion for G-contractibility}
Let $\mathcal{X}$ be a $G$-poset. If $\mathcal{X}$ is canonically $G$-contractible, then it is $G$-contractible.
\end{lem}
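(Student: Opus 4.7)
The plan is to reduce the claim to the classical poset-comparison principle in its equivariant version: if $f, g : \mathcal{X} \to \mathcal{Y}$ are maps of $G$-posets with $f(x) \leq g(x)$ for every $x \in \mathcal{X}$, then $\Delta(f)$ and $\Delta(g)$ are $G$-homotopic. This statement belongs to the Th\'evenaz--Webb framework already invoked in this section, so I would simply cite it and then assemble the three self-maps of $\mathcal{X}$ linked by the hypotheses of canonical $G$-contractibility.

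Concretely, alongside $\mathrm{id}_{\mathcal{X}}$ and the given $\phi$, I would introduce the constant map $c_{x_0}: \mathcal{X} \to \mathcal{X}$ sending every $x$ to the fixed element $x_0$. Because $x_0 \in \mathcal{X}^G$, the map $c_{x_0}$ is $G$-equivariant, and it is vacuously order preserving; hence it is a map of $G$-posets. The two inequalities provided by the hypothesis, $\mathrm{id}_{\mathcal{X}}(x) = x \leq \phi(x)$ and $c_{x_0}(x) = x_0 \leq \phi(x)$, then feed directly into the comparison principle to yield
\[
\Delta(\mathrm{id}_{\mathcal{X}}) \;\simeq_G\; \Delta(\phi) \;\simeq_G\; \Delta(c_{x_0}).
\]

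To finish the argument I would factor $\Delta(c_{x_0})$ through the one-point $G$-simplicial complex $\ast$ (with trivial $G$-action) as $i \circ \pi$, where $\pi: \Delta(\mathcal{X}) \to \ast$ is the unique collapse and $i: \ast \to \Delta(\mathcal{X})$ selects the $G$-fixed vertex $x_0$. Both maps are $G$-equivariant. Since $\pi \circ i = \mathrm{id}_{\ast}$ and $i \circ \pi = \Delta(c_{x_0}) \simeq_G \Delta(\mathrm{id}_{\mathcal{X}})$, the pair $(\pi, i)$ witnesses a $G$-homotopy equivalence $\Delta(\mathcal{X}) \simeq_G \ast$, which is precisely $G$-contractibility.

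No serious obstacle is anticipated: the whole argument is a formal consequence of the equivariant poset-comparison lemma. The only point worth double-checking is the $G$-equivariance of $c_{x_0}$, and this is immediate from $x_0$ being $G$-fixed, so the proof should go through cleanly.
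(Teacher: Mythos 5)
Your proposal is correct, but it takes a genuinely different route from the paper's. The paper reduces the claim to the non-equivariant case: it invokes the Bredon-type criterion from \cite[Theorem 6.4.2]{Ben98} (a $G$-map of $G$-simplicial complexes is a $G$-homotopy equivalence if and only if it restricts to an ordinary homotopy equivalence on $H$-fixed subcomplexes for all $H\leq G$), restricts $x_0$ and $\phi$ to each fixed-point poset $\mathcal{X}^H$, and then applies Quillen's non-equivariant criterion \cite[1.5]{Qui78}. You instead stay entirely in the equivariant category: you appeal to an equivariant version of Quillen's comparison principle (pointwise-comparable $G$-poset maps induce $G$-homotopic simplicial maps), deduce $\Delta(\mathrm{id}_{\mathcal{X}})\simeq_G\Delta(\phi)\simeq_G\Delta(c_{x_0})$ via the two inequalities, and then factor the constant map through the point. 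Both arguments are sound. Your route is arguably cleaner and avoids the fixed-point detour; the paper's is perhaps more elementary in that it only needs the classical Quillen lemma rather than its equivariant upgrade. The only thing to tighten in your write-up is the citation: you should pin down a precise reference for the equivariant comparison principle (it is implicit in the Th\'evenaz--Webb framework, as the prism homotopy realising $f\leq g$ is constructed naturally and is therefore automatically $G$-equivariant when $f$ and $g$ are $G$-maps, but it deserves an explicit pointer rather than a vague ``belongs to the framework'').
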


\begin{proof}
According to \cite[Theorem 6.4.2]{Ben98} it suffices to show that the simplicial complex $\Delta(\mathcal{X}^H)$ is contractible for every $H\leq G$. By assumption there exists an element $x_0\in\mathcal{X}^G$ and a map of $G$-posets $\phi:\mathcal{X}\to\mathcal{X}$ satisfying $x\leq \phi(x)\geq x_0$ for every $x\in\mathcal{X}$. In particular, we obtain a map $\phi^H:\mathcal{X}^H\to\mathcal{X}^H$ and an element $x_0\in\mathcal{X}^H$ such that $x\leq \phi^H(x)\geq x_0$ for every $x\in \mathcal{X}^H$. Then, $\Delta(\mathcal{X}^H)$ is canonically contractible thanks to \cite[1.5]{Qui78} and the result follows.
\end{proof}

There is a particularly simple situation in which the condition considered above is met. We say that a $G$-poset $\mathcal{X}$ is \textit{$G$-join-contractible (via $x_0$)} if there exists an element $x_0\in\mathcal{X}^G$ such that the join $x\vee x_0$ exists in $\mathcal{X}$ for every $x\in\mathcal{X}$.

\begin{cor}
\label{cor:Criterion for G-join-contractibility}
Let $\mathcal{X}$ be a $G$-poset. If $\mathcal{X}$ is $G$-join-contractible, then it is (canonically) $G$-contractible.
\end{cor}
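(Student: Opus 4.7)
The plan is to show that $G$-join-contractibility yields a canonical contracting map, after which Lemma \ref{lem:Criterion for G-contractibility} finishes the job. Concretely, suppose $\mathcal{X}$ is $G$-join-contractible via some $x_0 \in \mathcal{X}^G$, so that $x \vee x_0$ exists in $\mathcal{X}$ for every $x \in \mathcal{X}$. I would define
\[
\phi : \mathcal{X} \longrightarrow \mathcal{X}, \qquad \phi(x) := x \vee x_0,
\]
and then check the three properties required by the definition of canonical $G$-contractibility.

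First, $\phi$ is order-preserving: if $x \leq y$ in $\mathcal{X}$, then $y \vee x_0$ is an upper bound of $\{x, x_0\}$, so by the universal property of the join we have $x \vee x_0 \leq y \vee x_0$, i.e.\ $\phi(x) \leq \phi(y)$. Second, $\phi$ is $G$-equivariant: since $G$ acts on $\mathcal{X}$ by poset automorphisms and $x_0$ is fixed by $G$, for any $g \in G$ the element $g \cdot (x \vee x_0)$ is the join of $g\cdot x$ and $g \cdot x_0 = x_0$, hence equals $(g\cdot x) \vee x_0 = \phi(g \cdot x)$. Third, by the very definition of the join we have $x \leq x \vee x_0 = \phi(x)$ and $x_0 \leq x \vee x_0 = \phi(x)$ for every $x \in \mathcal{X}$, which is the condition $x \leq \phi(x) \geq x_0$.

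This shows that $\mathcal{X}$ is canonically $G$-contractible via $x_0$ and $\phi$, and applying Lemma \ref{lem:Criterion for G-contractibility} then yields $G$-contractibility of $\mathcal{X}$. There is no serious obstacle here: the statement is essentially unwinding the definitions, with the only substantive point being the observation that the join is functorial in its first argument when the second is held fixed, together with the $G$-invariance of $x_0$ that transports this functoriality through the group action.
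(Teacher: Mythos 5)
Your proof is correct and follows essentially the same route as the paper: define $\phi(x) := x \vee x_0$, observe it is a $G$-equivariant poset map satisfying $x \leq \phi(x) \geq x_0$, and invoke Lemma \ref{lem:Criterion for G-contractibility}. The paper's version is simply terser, leaving the order-preservation and the inequalities as immediate from the definition of join.
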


\begin{proof}
Consider the map $\phi:\mathcal{X}\to\mathcal{X}$ given by $\phi(x):=x\vee x_0$. Since $x_0\in\mathcal{X}^G$, we deduce that $\phi$ is $G$-equivariant and the result follows immediately from Lemma \ref{lem:Criterion for G-contractibility}.
\end{proof}

\subsection{Alternations for simplicial complexes}

In this section, we introduce the notion of \textit{alternation} for simplicial complexes. This is inspired by the work of Kn\"orr and Robinson and, in particular, by the proof of \cite[Proposition 3.3]{Kno-Rob89} (see also the proof of \cite[Theorem 9.16]{Nav18}). As we will see below, the idea is to provide a way to cancel out the contribution of a set of simplices to certain alternating sums. This often reduces questions to the study of smaller and better behaved simplicial complexes.

\begin{defin}[Alternation]
\label{def:Alternations}
Let $\Delta$ be a simplicial complex and consider a subcomplex $\Delta'\subseteq \Delta$. We say that a map
\[\phi:\Delta\setminus \Delta'\to\Delta\setminus \Delta'\]
is an \textit{alternation} of $\Delta'$ in $\Delta$ if it satisfies $\phi^2(\sigma)=\sigma$ and $|\phi(\sigma)|=|\sigma|\pm 1$ for every $\sigma\in\Delta\setminus \Delta'$. Moreover, if $\Delta$ is a $G$-simplicial complex and $\Delta'$ is a $G$-stable subcomplex of $\Delta$, then we say that $\phi$ is a \textit{$G$-alternation} of $\Delta'$ in $\Delta$ if it is additionally $G$-equivariant.
\end{defin}

In the next lemma we illustrate how the above construction can be used to reduce problems concerning a simplicial complex $\Delta$ to a subcomplex $\Delta'$. Suppose that $\Delta$ is finite. For every non-negative integer $r$ let $\Delta_r$ be the set of simplices of $\Delta$ of dimension $r$. Then, the \textit{Euler characteristic} of $\Delta$ is given by
\[\chi(\Delta):=\sum\limits_{r\geq 0}(-1)^r\left|\Delta_r\right|\]
which is a well-defined integer since $\Delta_r=\emptyset$ for every $r$ large than the (finite) dimension of $\Delta$. We also allow the Euler characteristic to be defined on an empty simplicial complex by setting $\chi(\emptyset):=0$ (this will be relevant in the proof of Corollary \ref{cor:Homological Sylow and e-split Levi}).

\begin{lem}
\label{lem:Alternations and euler characteristics}
Let $\Delta'\subseteq \Delta$ be two finite simplicial complexes. If there exists an alternation of $\Delta'$ in $\Delta$, then $\chi(\Delta)=\chi(\Delta')$.
\end{lem}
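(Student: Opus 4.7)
The plan is to expand the Euler characteristic as an alternating sum over all simplices, note that the alternation $\phi$ is a fixed-point-free involution on $\Delta\setminus\Delta'$, and exploit this to pair up simplices whose contributions to the sum cancel.

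More concretely, I would first observe that by definition
\[
\chi(\Delta)=\sum_{r\geq 0}(-1)^r|\Delta_r|=\sum_{\sigma\in\Delta}(-1)^{|\sigma|},
\]
and similarly $\chi(\Delta')=\sum_{\sigma\in\Delta'}(-1)^{|\sigma|}$. Subtracting and using $\Delta'\subseteq\Delta$, the problem reduces to showing
\[
\sum_{\sigma\in\Delta\setminus\Delta'}(-1)^{|\sigma|}=0.
\]

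For this, I would use the hypothesis that $\phi\colon\Delta\setminus\Delta'\to\Delta\setminus\Delta'$ satisfies $\phi^2=\mathrm{id}$ and $|\phi(\sigma)|=|\sigma|\pm 1$. The dimension-shifting condition immediately forces $\phi(\sigma)\neq\sigma$ for every $\sigma\in\Delta\setminus\Delta'$, so $\phi$ is a fixed-point-free involution. Therefore the orbits of $\langle\phi\rangle$ on $\Delta\setminus\Delta'$ all have size $2$, and they partition $\Delta\setminus\Delta'$ into pairs $\{\sigma,\phi(\sigma)\}$. For any such pair, since $|\phi(\sigma)|=|\sigma|\pm1$,
\[
(-1)^{|\sigma|}+(-1)^{|\phi(\sigma)|}=(-1)^{|\sigma|}\bigl(1+(-1)^{\pm 1}\bigr)=0.
\]
Summing over a set of orbit representatives then yields the vanishing displayed above, hence $\chi(\Delta)=\chi(\Delta')$.

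There is no real obstacle here: the argument is the standard involution-cancellation trick. The only point that needs a sentence of justification is that the dimension-shift condition forces $\phi$ to be fixed-point-free, which is what makes the partition into pairs well-defined; everything else is bookkeeping on the alternating sum, and the finiteness of $\Delta$ guarantees that all sums involved are actually finite.
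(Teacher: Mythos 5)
Your proof is correct and follows essentially the same cancellation argument as the paper: both rely on $\phi$ being an involution that flips dimension parity on $\Delta\setminus\Delta'$. The only cosmetic difference is that you pair individual simplices $\{\sigma,\phi(\sigma)\}$ (after noting $\phi$ is fixed-point-free), while the paper aggregates by even/odd dimension and uses $\phi$ to equate the cardinalities $|\Delta_+\setminus\Delta'_+|$ and $|\Delta_-\setminus\Delta'_-|$; both routes compute the same vanishing sum.
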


\begin{proof}
As above let $\Delta_r$ and $\Delta_r'$ be the sets of simplices of dimension $r$ in $\Delta$ and $\Delta'$ respectively. Since the dimension $d'$ of $\Delta'$ is less than or equal to the dimension $d$ of $\Delta$, we get
\begin{align}
\label{eq:Alternations and euler characteristics, 1}
\chi\left(\Delta\right)&=\sum_{r=0}^d(-1)^r\left|\Delta_r\right|
\\
&=\sum_{r=0}^{d'}(-1)^r\left|\Delta_r'\right|+\sum_{r=0}^{d}(-1)^r\left(\left|\Delta_r\right|-\left|\Delta_r'\right|\right)\nonumber
\\
&=\chi\left(\Delta'\right)+\sum_{r=0}^{d}(-1)^r\left|\Delta_r\setminus \Delta_r'\right|.\nonumber
\end{align}
Next, let $\Delta_+$ and $\Delta_-$ be the set of simplices of even and odd dimension respectively and define $\Delta_+'$ and $\Delta_-'$ analogously. If $\phi$ is an alternation of $\Delta'$ in $\Delta$, then
\[\left|\Delta_+\setminus\Delta'_+\right|=\left|\phi\left(\Delta_+\setminus\Delta'_+\right)\right|=\left|\Delta_-\setminus\Delta'_-\right|\]
and therefore
\begin{equation}
\label{eq:Alternations and euler characteristics, 2}
\sum_{r=0}^{d}(-1)^r\left|\Delta_r\setminus \Delta'_r\right|=\left|\Delta_+\setminus\Delta'_+\right|-\left|\Delta_-\setminus\Delta'_-\right|=0.
\end{equation}
Now, combining \eqref{eq:Alternations and euler characteristics, 1} and \eqref{eq:Alternations and euler characteristics, 2} we conclude that $\chi(\Delta')=\chi(\Delta)$.
\end{proof}

Next, following \cite{Kno-Rob89} we define a \textit{$G$-stable function} to be a map defined on the set of subgroups of $G$, with integer values, and that is constant on $G$-conjugacy classes of subgroups. For instance, the maps given by $f_1(H):=\k(H)$ (the number of conjugacy classes of $H$) and $f_2(H):=l_\ell(H)$ (the number of $\ell$-regular conjugacy classes of $H$) for every subgroup $H$ of $G$ are $G$-stable functions.

\begin{lem}
\label{lem:Alternation and G-stable functions}
Let $f$ be a $G$-stable function for some finite group $G$ and consider a finite dimensional $G$-simplicial complex $\Delta$ and a $G$-stable subcomplex $\Delta'\subseteq \Delta$. If there is a $G$-alternation of $\Delta'$ in $\Delta$, then
\[\sum\limits_{\sigma}(-1)^{|\sigma|}f(G_\sigma)=\sum\limits_{\sigma'}(-1)^{|\sigma'|}f(G_{\sigma'})\]
where $\sigma$ and $\sigma'$ run over representative sets for the action of $G$ on $\Delta$ and $\Delta'$ respectively. 
\end{lem}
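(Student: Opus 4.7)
The plan is to mirror the strategy of Lemma~\ref{lem:Alternations and euler characteristics}, but keep track of stabilisers so that the $G$-stable function $f$ can be evaluated on the nose rather than only up to conjugacy. First I would split the sum indexed by $G$-orbits on $\Delta$ into two pieces, one indexed by $G$-orbits on $\Delta'$ and one indexed by $G$-orbits on the complement $\Delta\setminus \Delta'$: this is legitimate because $\Delta'$ is $G$-stable, so the $G$-orbits of $\Delta$ partition cleanly, and because $f(G_\sigma)$ depends only on the $G$-conjugacy class of $G_\sigma$, which is constant along a $G$-orbit. The target identity then reduces to the claim
\[\sum_{[\tau]}(-1)^{|\tau|}f\!\left(G_\tau\right)=0,\]
where $[\tau]$ ranges over $G$-orbits on $\Delta\setminus \Delta'$.

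Next I would use the $G$-alternation $\phi$ to produce a fixed-point-free involution on the set of $G$-orbits of $\Delta\setminus \Delta'$. Since $\phi$ is $G$-equivariant, whenever $\tau'=g\tau$ we have $\phi(\tau')=g\phi(\tau)$; hence $\phi$ descends to a map $\overline{\phi}$ on orbits. The relation $\phi^2=\mathrm{id}$ passes to $\overline{\phi}^{\,2}=\mathrm{id}$, and $|\phi(\tau)|=|\tau|\pm 1$ forces $\overline{\phi}[\tau]\neq [\tau]$, since simplices in a single $G$-orbit share the same dimension. So $\overline{\phi}$ partitions the orbits into pairs $\{[\tau],[\phi(\tau)]\}$.

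Finally I would check that each such pair contributes $0$ to the sum. The signs cancel, since $|\phi(\tau)|=|\tau|\pm1$ yields $(-1)^{|\phi(\tau)|}=-(-1)^{|\tau|}$. For the stabilisers, the $G$-equivariance of $\phi$ gives
\[g\in G_\tau\iff \phi(g\tau)=\phi(\tau)\iff g\phi(\tau)=\phi(\tau)\iff g\in G_{\phi(\tau)},\]
where the first equivalence uses that $\phi$ is a bijection on $\Delta\setminus\Delta'$ (a consequence of $\phi^2=\mathrm{id}$). Thus $G_\tau=G_{\phi(\tau)}$, and in particular $f(G_\tau)=f(G_{\phi(\tau)})$ (we do not even need $G$-stability of $f$ here, only its well-definedness on conjugacy classes, which was used to make the orbit sum itself well-defined). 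The pair $\{[\tau],[\phi(\tau)]\}$ therefore contributes $(-1)^{|\tau|}f(G_\tau)+(-1)^{|\phi(\tau)|}f(G_{\phi(\tau)})=0$, and summing over all pairs proves the claim.

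The only point requiring a little care is the bookkeeping in the first paragraph: one must verify that choosing orbit representatives of $\Delta\setminus \Delta'$ and orbit representatives of $\Delta'$ together yields a valid set of orbit representatives for $\Delta$, which follows at once from the $G$-stability of $\Delta'$. Beyond that, the argument is a purely combinatorial cancellation, and no new ingredient is needed.
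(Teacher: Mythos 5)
Your proof is correct and follows essentially the same route as the paper's: both split off the contribution from $\Delta'$ (by choosing representatives so that those of $\Delta'$ sit inside those of $\Delta$) and then cancel the contribution of $\Delta\setminus\Delta'$ using the parity flip $|\phi(\sigma)|=|\sigma|\pm1$ together with the on-the-nose stabiliser equality $G_\sigma=G_{\phi(\sigma)}$. The only cosmetic difference is that the paper packages the cancellation as ``$S=-S$, hence $S=0$'' by comparing the two representative sets $\mathcal{T}$ and $\phi(\mathcal{T})$, whereas you explicitly pair up the $G$-orbits under the induced fixed-point-free involution; the underlying facts used are identical.
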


\begin{proof}
Let us fix representative sets $\mathcal{S}$ and $\mathcal{S}'$ for the action of $G$ on $\Delta$ and $\Delta'$ respectively. Without loss of generality we may assume that $\mathcal{S}'$ is contained in $\mathcal{S}$ so that $\mathcal{T}:=\mathcal{S}\setminus \mathcal{S}'$ is a representative set for the action of $G$ on $\Delta\setminus \Delta'$. Now, if $\phi$ is a $G$-alternation of $\Delta'$ in $\Delta$, it follows that also $\phi(\mathcal{T})$ is a representative set for the action of $G$ on $\Delta\setminus\Delta'$. Moreover, observe that $|\phi(\rho)|=|\rho|\pm 1$ and $G_{\rho}=G_{\phi(\rho)}$ for every $\rho\in\mathcal{T}$. Then, since $f$ is constant on $G$-conjugacy classes of subgroups we deduce that
\[\sum\limits_{\rho\in\mathcal{T}}(-1)^{|\rho|}f\left(G_\rho\right)=\sum\limits_{\phi(\rho)\in\phi(\mathcal{T})}(-1)^{|\phi(\rho)|}f\left(G_{\phi(\rho)}\right)=-\sum\limits_{\rho\in\mathcal{T}}(-1)^{|\rho|}f\left(G_\rho\right)\]
and therefore the alternating sum on the left-hand side is zero. We conclude that
\[\sum\limits_{\sigma\in\mathcal{S}}(-1)^{|\sigma|}f(G_\sigma)=\sum\limits_{\sigma'\in\mathcal{S}'}(-1)^{|\sigma'|}f(G_{\sigma'})+\sum\limits_{\rho\in\mathcal{T}}(-1)^{|\rho|}f(G_\rho)=\sum\limits_{\sigma'\in\mathcal{S}'}(-1)^{|\sigma'|}f(G_{\sigma'})\]
as claimed in the statement.
\end{proof}

We refer the reader to Section \ref{sec:Alternations} for the construction of the alternations relevant to this paper and to Section \ref{sec:Cancellation} for further applications.

\subsection{Generic Sylow theory}
\label{sec:Order polynomial}

In \cite{Bro-Mal92} Brou\'e and Malle introduced an analogue of the theory of $\ell$-subgroups in the context of connected reductive groups. The main idea is to replace prime powers with powers of cyclotomic polynomials. To start, observe that for every connected reductive group $\G$ defined over an algebraically closed field $\mathbb{F}$ of characteristic $p$, with a Frobenius endomorphism $F:\G\to \G$ associated with an $\mathbb{F}_q$-structure on $\G$ ($q$ a power of $p$), we can associate a polynomial $P_{(\G,F)}(x)\in\mathbb{Z}[x]$, known as the \textit{order polynomial} of $(\G,F)$, with the property that $P_{(\G,F)}(q)=|\G^F|$ (see \cite[Definition 1.6.10 and Remark 1.6.15]{Gec-Mal20}). Now, for a set of positive integers $E$, we say that an $F$-stable torus $\S$ of $\G$ is a \textit{$\Phi_E$-torus} of $(\G,F)$ if its order polynomial $P_{(\S,F)}(x)$ is the product of $e$-th cyclotomic polynomials $\Phi_e(x)$ with $e\in E$. Centralisers of $\Phi_E$-tori are called \textit{$E$-split Levi subgroups} of $(\G,F)$. When considering the set $E=\{e\}$, we omit parentheses and talk about $\Phi_e$-tori, $e$-split Levi subgroups etc. Moreover, recall that every $F$-stable torus $\T$ contains a unique maximal $\Phi_E$-torus denoted by $\T_{\Phi_E}$. We define $\mathcal{T}_E(\G,F)$ to be the set of $\Phi_E$-tori of $(\G,F)$ containing $\z^\circ(\G)_{\Phi_E}$ and consider its subset $\mathcal{T}_E^\star(\G,F)$ consisting of those $\Phi_E$-tori of $(\G,F)$ strictly containing $\z^\circ(\G)_{\Phi_E}$. Similarly, we write $\CL_E(\G,F)$ to denote the set of $E$-split Levi subgroups of $(\G,F)$ while the subset of proper (that is, strictly contained in $\G$) $E$-split Levi subgroups of $(\G,F)$ is denoted by $\CL_E^\star(\G,F)$. These sets have a natural structure of $\G^F$-posets with respect to the conjugation action of $\G^F$ and the usual inclusion of subgroups. As explained in Section \ref{sec:Notations chains}, we can then form the corresponding simplicial complexes. By convention, we assume that the simplices of $\Delta(\mathcal{T}_E(\G,F))$ always have $\z^\circ(\G)_{\Phi_E}$ (the minimum of the poset $\mathcal{T}_E(\G,F)$) as a starting term. Similarly, we assume that the simplices of $\Delta(\CL_e(\G,F))$ have $\G$ (the maximum of the poset $\CL_E(\G,F)$) as a final term.

We define $e_{\ell}(q)$ to be the order of $q$ modulo $\ell$ (modulo $4$ if $\ell=2$) and $E_{\ell}(q):=\{n\in\mathbb{N}\mid n_{\ell'}=e_{\ell}(q)\}$. The latter coincides with the set of positive integers $n$ such that $\ell$ divides $\Phi_n(q)$. Notice that in this paper, we will often write $A_\ell:=\O_\ell(A)$ whenever $A$ is an abelian finite group.

\begin{lem}
\label{lem:Abelian subgroup of e-split Levi subgroup}
Assume that $\ell$ is good for $\G$ and does not divide $|\z(\G)^F:\z^\circ(\G)^F|$ and consider a subset $E\subseteq E_{\ell}(q)$. If $\L$ is an $E$-split Levi subgroup of $(\G,F)$, then $\L=\c_\G(\z^\circ(\c_\G^\circ(\z(\L)^F_\ell))_{\Phi_E})$.
\end{lem}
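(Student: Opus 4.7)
The plan is to work with three natural reductive subgroups attached to $\L$ and show they all coincide. Write $\S := \z^\circ(\L)_{\Phi_E}$, so that, since $\L$ is $E$-split, $\L = \c_\G(\S)$. Set $A := \z(\L)^F_\ell$ and $\M := \c_\G^\circ(A)$. The goal is to prove $\c_\G(\z^\circ(\M)_{\Phi_E}) = \L$, and I will deduce this from the stronger equality $\M = \L$.

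First I would prove $\L \subseteq \M$. Since $A \subseteq \z(\L)$ and $\L$ is connected, $\L$ is contained in $\c_\G(A)$ and, being connected, in its connected component $\M$. The reverse inclusion is the heart of the argument. The idea is that $A$ already contains enough $\ell$-torsion to detect $\S$: indeed $\S \subseteq \z^\circ(\L) \subseteq \z(\L)$, so $\S^F_\ell \subseteq \z(\L)^F_\ell = A$, which gives $\M = \c_\G^\circ(A) \subseteq \c_\G^\circ(\S^F_\ell)$. I would then invoke the standard fact—which, under the assumption that $\ell$ is good for $\G$, does not divide $|\z(\G)^F:\z^\circ(\G)^F|$, and that $E \subseteq E_\ell(q)$—that a Sylow $\ell$-subgroup of $\S^F$ is \emph{generic} in $\S$, in the sense that $\c_\G^\circ(\S^F_\ell) = \c_\G(\S)$. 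This is precisely the type of statement collected in the background subsection on centralisers of abelian $\ell$-subgroups referred to in the introduction, and it should either already be available there or be an immediate consequence of it. Combining these inclusions yields $\M \subseteq \c_\G(\S) = \L$, hence $\M = \L$.

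Once $\M = \L$ is established, the final step is formal: $\z^\circ(\M) = \z^\circ(\L)$, so $\z^\circ(\c_\G^\circ(A))_{\Phi_E} = \z^\circ(\L)_{\Phi_E} = \S$, and therefore
\[
\c_\G\bigl(\z^\circ(\c_\G^\circ(\z(\L)^F_\ell))_{\Phi_E}\bigr) = \c_\G(\S) = \L.
\]

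The main obstacle is the genericity statement $\c_\G^\circ(\S^F_\ell) = \c_\G(\S)$. Concretely, it requires that the Zariski closure of $\S^F_\ell$ be equal to $\S$ (or at least have the same connected centraliser in $\G$), and this is exactly where the hypotheses on $\ell$ enter: goodness controls the connectedness of centralisers of $\ell$-elements, while the condition on $|\z(\G)^F:\z^\circ(\G)^F|$ ensures that no pathology occurs at the level of the centre. If the cited form is not already on record in the preceding subsection, I would prove it by decomposing $\S$ according to its $\Phi_e$-isotypic parts for $e \in E \subseteq E_\ell(q)$, noting that for each such $e$ the $\ell$-part of $\Phi_e(q)$ is non-trivial and that repeated application of Frobenius on the (finitely many) $F$-stable subtori of $\S$ forces the Zariski closure of $\S^F_\ell$ to exhaust $\S$ under the good-prime hypothesis.
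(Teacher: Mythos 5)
Your structure is sound and, in logical content, mirrors the paper's: both arguments hinge on establishing the equality $\c_\G^\circ\bigl(\z(\L)^F_\ell\bigr)=\L$ (your $\M=\L$) and then substituting into the standard fact $\L=\c_\G\bigl(\z^\circ(\L)_{\Phi_E}\bigr)$. The easy inclusion $\L\subseteq\M$ and the formal final step are fine. The difference is that the paper obtains the key equality by citing Cabanes--Enguehard's Proposition~13.19 directly, whereas you try to reprove it by reducing to the ``genericity'' claim $\c_\G^\circ(\S^F_\ell)=\c_\G(\S)$ for the $\Phi_E$-torus $\S=\z^\circ(\L)_{\Phi_E}$.

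That intermediate claim is indeed the right kind of fact to want here, and versions of it are in the literature (it is essentially the content of the cited Cabanes--Enguehard result, applied through the chain $\S^F_\ell\subseteq\z(\L)^F_\ell$). But your proposed fallback proof has a genuine gap: you argue that the Zariski closure of $\S^F_\ell$ should ``exhaust $\S$.'' This cannot happen: $\S^F_\ell$ is a finite subgroup of the torus $\S$, hence is Zariski closed, so its Zariski closure is $\S^F_\ell$ itself and never equals $\S$ once $\dim\S>0$. Density is not the mechanism. The correct argument is root-theoretic: one shows that any root $\alpha$ of $\G$ which is trivial on $\S^F_\ell$ must already be trivial on $\S$, using that for $e\in E\subseteq E_\ell(q)$ the $\ell$-part of $\Phi_e(q)$ is nontrivial and that $\ell$ is good (so bad torsion does not interfere), and then compares the root subsystems defining the two centralisers. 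If you want a self-contained proof rather than a citation, that is the route to take; as written, the Zariski-closure step would not go through.
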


\begin{proof}
First, under our assumption, \cite[Proposition 13.19]{Cab-Eng04} implies that $\L=\c^\circ_\G(\z(\L)^F_\ell)$. On the other hand, $\L=\c_\G(\z^\circ(\L)_{\Phi_E})$ (see, for instance, \cite[Proposition 3.5.5]{Gec-Mal20} or \cite[Lemma 2.5 (i)]{Ros-Generalized_HC_theory_for_Dade}). Now, the result follows by combining the above two equalities.
\end{proof}

\subsection{Centralisers of abelian $\ell$-subgroups}
\label{sec:Centralisers of abelian subgroups}

In this section we collect some well-known results on centralisers of abelian $\ell$-subgroups (see \cite[Section 2]{Cab-Eng94} and \cite[Section 3]{Cab-Eng99}). Remember that $\ell$ is a fixed prime different from the defining characteristic $p$ of $\G$.

\begin{lem}
\label{lem:Abelian subgroups to Levi subgroups}
Let $A$ be an abelian $\ell$-subgroup of $\G^F$ and assume that $\ell$ is good for $\G$. Then $\c_\G^\circ(A)$ is an $F$-stable Levi subgroup of $\G$. If in addition $\ell$ does not divide $|\z(\G)^F:\z^\circ(\G)^F|$ nor $|\z(\G^*)^F:\z^\circ(\G^*)^F|$, then:
\begin{enumerate}
\item $\c_\G^\circ(A)^F=\c_\G(A)^F$;
\item $A\leq \z^\circ(\c_\G^\circ(A))$; and
\item $\c_\G^\circ(A)$ is an $E_{\ell}(q)$-split Levi subgroup.
\end{enumerate}
\end{lem}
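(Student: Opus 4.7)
The proof naturally splits along the four stated assertions, with the first relying only on the goodness of $\ell$ and the remaining three using both centre hypotheses. The Frobenius $F$ stabilises $\c_\G(A)$ since $A$ is $F$-fixed pointwise, and hence also stabilises $\c_\G^\circ(A)$. That the identity component is a Levi subgroup of $\G$ when $\ell$ is good and distinct from $p$ is the classical result recorded in \cite[Proposition 13.16(i)]{Cab-Eng04}: because $\ell\neq p$, every element of $A$ is semisimple, and the goodness of $\ell$ then delivers the Levi conclusion.

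For (i), the equality $\c_\G(A)^F=\c_\G^\circ(A)^F$ is a standard consequence of the hypothesis $\ell\nmid |\z(\G^*)^F:\z^\circ(\G^*)^F|$, via Steinberg's description of the component group of a semisimple centraliser combined with a Lang--Steinberg argument applied to the connected group $\c_\G^\circ(A)$; I would simply cite this from the surrounding discussion in \cite{Cab-Eng94} or \cite[Chapter 13]{Cab-Eng04}. For (ii), write $\L:=\c_\G^\circ(A)$. By (i) we have $A\leq \L^F$, and since $A$ centralises $\L$ the inclusion $A\leq \z(\L)^F$ is immediate. To upgrade this to $A\leq \z^\circ(\L)$, I would observe that the centre hypothesis on $\G$ is inherited by its Levi subgroups, so $|\z(\L)^F:\z^\circ(\L)^F|$ is coprime to $\ell$; the image of the $\ell$-group $A$ in the finite quotient $\z(\L)^F/\z^\circ(\L)^F$ is then simultaneously $\ell$-torsion and $\ell'$-torsion, hence trivial.

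For (iii), set $\S:=\z^\circ(\L)_{\Phi_{E_{\ell}(q)}}$, which is a $\Phi_{E_{\ell}(q)}$-torus by construction. By (ii), $A$ is an $\ell$-subgroup of the torus $\z^\circ(\L)^F$; the characterisation of $E_{\ell}(q)$ as the set of positive integers $n$ with $\ell\mid\Phi_n(q)$ together with the cyclotomic factorisation of the order of an $F$-stable torus forces $A\leq \S^F$. The inclusion $\S\leq \z(\L)$ yields $\L\leq \c_\G(\S)$, while conversely $\c_\G(\S)\leq \c_\G(A)=\c_\G^\circ(A)=\L$ by (i), whence $\L=\c_\G(\S)$ is $E_{\ell}(q)$-split. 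The main technical hurdle I anticipate is the inheritance of the centre hypothesis from $\G$ to the Levi $\L$ used in step (ii), together with pinpointing the exact reference used for the component-group argument in (i); once these are in place, the remainder is bookkeeping against the standard theory of centralisers of semisimple elements.
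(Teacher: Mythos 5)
Your proof takes essentially the same route as the paper's, which simply delegates the Levi statement and part (i) to Cabanes--Enguehard and parts (ii)--(iii) to Lemma 2.6 of \cite{Ros-Generalized_HC_theory_for_Dade}; you have instead reconstructed the underlying arguments, and the overall structure is sound. However, there is one genuine slip in your treatment of (iii). You write ``$\c_\G(\S)\leq\c_\G(A)=\c_\G^\circ(A)=\L$ by (i)'', but part (i) only asserts equality of the groups of $F$-\emph{fixed points} $\c_\G(A)^F=\c_\G^\circ(A)^F$, not of the algebraic groups $\c_\G(A)$ and $\c_\G^\circ(A)$ themselves, and the latter equality is generally false for centralisers of finite abelian $\ell$-subgroups. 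The conclusion you want is nevertheless correct and salvageable in one line: since $\S$ is a torus, the centraliser $\c_\G(\S)$ is connected, and from $A\leq\S$ one gets $\c_\G(\S)\leq\c_\G(A)^\circ=\c_\G^\circ(A)=\L$, which together with $\L\leq\c_\G(\S)$ (as $\S\leq\z(\L)$) gives $\L=\c_\G(\S)$. So the gap is small but real; invoking (i) at that point is the wrong justification.

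Two further minor remarks. First, you correctly flag the inheritance of the condition $\ell\nmid|\z(\cdot)^F:\z^\circ(\cdot)^F|$ from $\G$ to its $F$-stable Levi subgroups as the main technical point in (ii); this is indeed a standard fact under the goodness hypothesis (it can be extracted from the discussion around \cite[Proposition 13.12]{Cab-Eng04}), but it does need to be cited rather than asserted. Second, the claim that the full $\ell$-torsion of $\z^\circ(\L)^F$ lives in $(\z^\circ(\L)_{\Phi_{E_\ell(q)}})^F$ is correct and is exactly the content of the cyclotomic structure theory from \cite{Bro-Mal92}; your sketch of this step is fine. With the connectedness fix in (iii) your argument is a legitimate unfolding of what the paper compresses into a chain of citations.
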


\begin{proof}
The connected centraliser $\c_\G^\circ(A)$ is a Levi subgroup according to \cite[Proposition 2.1 (ii)]{Cab-Eng94}. Assume then that $\ell$ does not divide $|\z(\G)^F:\z^\circ(\G)^F|$ nor $|\z(\G^*)^F:\z^\circ(\G^*)^F|$. Under this assumption, property (i) follows by the argument used to prove \cite[Proposition 2.1 (iii)]{Cab-Eng94}, while the properties (ii) and (iii) follow immediately from (i) as explained in the proof of \cite[Lemma 2.6 (ii)-(iii)]{Ros-Generalized_HC_theory_for_Dade}.
\end{proof}

\begin{cor}
\label{cor:Abelian subgroups to e-split Levi subgroups}
Let $A$ be an abelian $\ell$-subgroup of $\G^F$ and assume that $\ell$ is good for $\G$. Then:
\begin{enumerate}
\item $\c_\G(\z^\circ(\c_\G^\circ(A))_{\Phi_e})$ is an $e$-split Levi subgroup of $(\G,F)$;
\item if in addition $\ell$ does not divide $|\z(\G)^F:\z^\circ(\G)^F|$ nor $|\z(\G^*)^F:\z^\circ(\G^*)^F|$, then $A\leq \c_\G(\z^\circ(\c_\G^\circ(A))_{\Phi_e})$.
\end{enumerate}
\end{cor}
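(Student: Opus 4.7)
The plan is to reduce both claims directly to Lemma \ref{lem:Abelian subgroups to Levi subgroups} together with the standard construction of $\Phi_e$-tori inside the connected centre of an $F$-stable Levi subgroup. No substantial new input should be required; everything is a short chase through the definitions.

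For (i), the starting point is the first assertion of Lemma \ref{lem:Abelian subgroups to Levi subgroups}: since $\ell$ is good for $\G$, the connected centraliser $\CL:=\c_\G^\circ(A)$ is an $F$-stable Levi subgroup of $\G$. Consequently $\z^\circ(\CL)$ is an $F$-stable torus of $\G$, so it contains a unique maximal $\Phi_e$-torus $\z^\circ(\CL)_{\Phi_e}$. By the very definition of an $e$-split Levi subgroup recalled in Section \ref{sec:Order polynomial}, the centraliser $\c_\G(\z^\circ(\CL)_{\Phi_e})$ is then an $e$-split Levi subgroup of $(\G,F)$, which proves (i).

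For (ii), assume further that $\ell$ does not divide $|\z(\G)^F:\z^\circ(\G)^F|$ nor $|\z(\G^*)^F:\z^\circ(\G^*)^F|$. Then Lemma \ref{lem:Abelian subgroups to Levi subgroups}(ii) applies and yields the inclusion $A\leq \z^\circ(\CL)$. Since $\z^\circ(\CL)$ is a torus, and therefore abelian, every subgroup of it commutes with every other subgroup of it; in particular $A$ centralises the subtorus $\z^\circ(\CL)_{\Phi_e}$. Equivalently $A\leq \c_\G(\z^\circ(\CL)_{\Phi_e})$, which is (ii).

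There is no real obstacle: the whole statement is essentially an unpacking of Lemma \ref{lem:Abelian subgroups to Levi subgroups} combined with the trivial remark that subgroups of a torus commute with each other. The only point one has to be slightly careful about is keeping track of which hypotheses are needed for which half: (i) only requires $\ell$ to be good (so that $\c_\G^\circ(A)$ is a Levi), whereas (ii) needs the extra centre conditions in order to place $A$ inside $\z^\circ(\c_\G^\circ(A))$ in the first place.
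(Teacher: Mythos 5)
Your proof is correct and follows the same route as the paper: part (i) comes straight from Lemma \ref{lem:Abelian subgroups to Levi subgroups} plus the definition of $e$-split Levi subgroup (the existence of a unique maximal $\Phi_e$-subtorus inside an $F$-stable torus being the standard fact from Brou\'e--Malle), and part (ii) comes from $A\leq \z^\circ(\c_\G^\circ(A))$ together with the observation that $A$ therefore centralises the subtorus $\z^\circ(\c_\G^\circ(A))_{\Phi_e}$. The only cosmetic remark is that denoting the Levi subgroup by $\CL$ clashes with the paper's use of $\CL$ for the poset of Levi subgroups; the paper writes $\L$ for the individual Levi.
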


\begin{proof}
Set $\L:=\c_\G^\circ(A)$ and notice that this is an $F$-stable Levi subgroup by Lemma \ref{lem:Abelian subgroups to Levi subgroups}. Then $\z^\circ(\L)$ is an $F$-stable torus and by \cite[Theorem 3.4 (2)]{Bro-Mal92} it contains a unique maximal $\Phi_e$-torus $\z^\circ(\L)_{\Phi_e}$. Then $\c_\G(\z^\circ(\L)_{\Phi_e})$ is an $e$-split Levi subgroup of $(\G,F)$. Suppose now that $\ell$ does not divide $|\z(\G)^F:\z^\circ(\G)^F|$ nor $|\z(\G^*)^F:\z^\circ(\G^*)^F|$. By Lemma \ref{lem:Abelian subgroups to Levi subgroups} we get $A\leq \z^\circ(\c_\G^\circ(A))$ and hence $A$ centralises $\z^\circ(\c_\G^\circ(A))$. From this, we deduce that $A\leq \c_\G(\z^\circ(\c_\G^\circ(A)))\leq \c_\G(\z^\circ(\c_\G^\circ(A))_{\Phi_e})$ as required. 
\end{proof}

\section{Homotopy equivalences and applications}
\label{sec:Homotopy equivalence}

Let $\G$ be a connected reductive group defined over an algebraically closed field $\mathbb{F}$ of characteristic $p$, $F:\G\to \G$ a Frobenius endomorphism defining an $\mathbb{F}_q$-structure on the variety $\G$, and $\ell$ a prime number. The (homotopy type of the) Brown complex $\Delta(\mathcal{S}_\ell^\star(\G^F))$ was described by Quillen in \cite[Theorem 3.1]{Qui78} for the case $\ell=p$. In particular, it was shown that $\Delta(\mathcal{S}_\ell^\star(\G^F))$ is homotopy equivalent to the Tits (spherical) building associated to the set of proper rational parabolic subgroups. In this section we consider the non-defining characteristic case $\ell\neq p$ and show, under suitable hypotheses, that the homotopy type of the Brown complex is controlled by the generic Sylow theory developed by Brou\'e and Malle \cite{Bro-Mal92}. We then consider applications of this result to generic Sylow theory, homology decompositions, and Alperin's Weight Conjecture.

\subsection{Equivariant homotopy equivalence via $e_{\ell}(q)$-split Levi subgroups}

We state a condition on primes that was first considered in the work of Cabanes and Enguehard (see \cite[Condition 22.1]{Cab-Eng04}). This will ensures that non-central abelian $\ell$-subgroups correspond to proper $e_{\ell}(q)$-split Levi subgroups under the map of posets consider in the proof below. Recall that every connected reductive group $\G$ gives rise to a simply connected group $\G_{\rm sc}:=([\G,\G])_{\rm sc}$ as defined in \cite[Example 1.5.3 (b)]{Gec-Mal20}. We freely use the notion of dual group $(\G^*,F^*)$ as in \cite[Definition 1.5.17]{Gec-Mal20}.

\begin{defin}
\label{def:Prime condition}
Let $\pi(\G,F)$ be the set of primes $\ell$ that are good for $\G$, do not divide $2$, $q$ or $|\z(\G_{\rm sc})^F|$, and satisfy $\ell\neq 3$ whenever $(\G,F)$ has a rational component of type ${^3{{\bf D}}_4}$. Then, we define $\pi(\G,F)$ to be the set of primes $\ell\in\pi'(\G,F)$ not diving $|\z(\G)^F:\z^\circ(\G)^F|$ nor $|\z(\G^*)^F:\z^\circ(\G^*)^F|$.
\end{defin}

We now prove Theorem \ref{thm:Main homotopy equivalence for ell-subgroups and e-split Levi}.



\begin{theo}
\label{thm:Homotopy equivalence for e-split Levi}
Suppose that $\ell\in\pi(\G,F)$ does not divide the order of $\z(\G)^F$. Then
\[\Delta\left(\mathcal{S}^\star_\ell\left(\G^F\right)\right)\simeq_{\G^F}\Delta\left(\CL_{e_{\ell}(q)}^\star\left(\G,F\right)\right).\]
\end{theo}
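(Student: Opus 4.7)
The plan is to factor the desired homotopy equivalence through the $\G^F$-poset of non-trivial abelian $\ell$-subgroups, producing
$$\Delta\!\left(\mathcal{S}_\ell^\star\!\left(\G^F\right)\right) \;\simeq_{\G^F}\; \Delta\!\left(\ab_\ell^\star\!\left(\G^F\right)\right) \;\simeq_{\G^F}\; \Delta\!\left(\CL_{e_\ell(q)}^\star(\G,F)\right).$$
For the first step I would use the inclusion $\iota : \ab_\ell^\star(\G^F) \hookrightarrow \mathcal{S}_\ell^\star(\G^F)$ of $\G^F$-posets. For a non-trivial $\ell$-subgroup $P$, the preimage $\iota^{-1}(\mathcal{S}_\ell^\star(\G^F)_{\leq P})$ is the poset of non-trivial abelian $\ell$-subgroups of $P$; it contains the $\n_{\G^F}(P)$-stable non-trivial centre $Z(P)$ and is closed under the join $A \mapsto A \cdot Z(P)$, since every abelian $A \leq P$ commutes with $Z(P)$. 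Hence this preimage is $\n_{\G^F}(P)$-join-contractible, so Corollary \ref{cor:Criterion for G-join-contractibility} together with Lemma \ref{lem:Criterion for G-homotopy}(i) yields the first equivalence.

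For the second step, set $e := e_\ell(q)$ and introduce $\phi : \ab_\ell^\star(\G^F) \to \CL_e^\star(\G,F)$ defined by $\phi(A) := \c_\G(\z^\circ(\c_\G^\circ(A))_{\Phi_e})$. Corollary \ref{cor:Abelian subgroups to e-split Levi subgroups} ensures that $\phi(A)$ is an $e$-split Levi containing $A$; properness $\phi(A) \neq \G$ follows from $\ell \nmid |\z(\G)^F|$ combined with the hypotheses bundled in $\pi(\G,F)$, which force $A \not\leq \z(\G)$ and make $\c_\G^\circ(A)$ a proper Levi whose $\Phi_e$-central torus strictly contains $\z^\circ(\G)_{\Phi_e}$. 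The map $\phi$ is $\G^F$-equivariant and order-reversing: if $A \leq B$ then $\c_\G^\circ(B) \leq \c_\G^\circ(A)$ (both Levi by Lemma \ref{lem:Abelian subgroups to Levi subgroups}), and hence $\z^\circ(\c_\G^\circ(A)) \leq \z^\circ(\c_\G^\circ(B))$ and $\phi(A) \geq \phi(B)$. I regard $\phi$ as an order-preserving map into $\CL_e^\star(\G,F)^{op}$, which is harmless because $\Delta(\mathcal{X}) = \Delta(\mathcal{X}^{op})$ for every poset $\mathcal{X}$. For $\L \in \CL_e^\star(\G,F)$ the relevant fibre is
$$\mathcal{Y}_\L \;=\; \left\{A \in \ab_\ell^\star(\G^F) \,:\, \z^\circ(\c_\G^\circ(A))_{\Phi_e} \leq \z^\circ(\L)_{\Phi_e}\right\},$$
which contains $Z := \z^\circ(\L)^F_\ell$, a non-trivial $\n_{\G^F}(\L)$-stable abelian $\ell$-subgroup (non-triviality follows from $\ell \mid \Phi_e(q)$ and from $\L$ being proper $e$-split). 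I would then show that $\mathcal{Y}_\L$ is $\n_{\G^F}(\L)$-join-contractible via $Z$, so that Corollary \ref{cor:Criterion for G-join-contractibility} and Lemma \ref{lem:Criterion for G-homotopy} applied to the opposite poset complete the argument.

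The main obstacle is the join-contractibility of $\mathcal{Y}_\L$: one must verify both that $[A,Z]=1$ for every $A \in \mathcal{Y}_\L$ and that $A \cdot Z$ again belongs to $\mathcal{Y}_\L$. Commutation hinges on the delicate fact, supplied by $\ell \in \pi(\G,F)$ and $\ell \nmid |\z(\G)^F|$, that the Sylow $\ell$-subgroup $Z$ of $\z^\circ(\L)^F$ sits inside $\z^\circ(\L)_{\Phi_e}$, and the latter is centralised by $A$ by the defining condition of $\mathcal{Y}_\L$. Stability under the join will then follow from Lemma \ref{lem:Abelian subgroup of e-split Levi subgroup} applied to $\c_\G^\circ(A \cdot Z)$, a Levi subgroup of $\L$ whose $\Phi_e$-central torus is sandwiched between $\z^\circ(\L)_{\Phi_e}$ and $\z^\circ(\c_\G^\circ(A))_{\Phi_e}$. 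The technical backbone is provided by the results of Section \ref{sec:Centralisers of abelian subgroups} on centralisers of abelian $\ell$-subgroups.
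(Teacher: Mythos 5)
Your overall strategy matches the paper's: reduce from $\mathcal{S}_\ell^\star(\G^F)$ to $\ab_\ell^\star(\G^F)$, then send an abelian $\ell$-subgroup $A$ to the $e$-split Levi $\phi(A)=\c_\G(\z^\circ(\c_\G^\circ(A))_{\Phi_e})$ and apply Quillen's fibre criterion. Your first step, replacing the citation of Th\'evenaz--Webb with a direct join-contractibility argument via $Z(P)$, is a valid (and slightly more self-contained) alternative.

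However, there are two genuine gaps in your handling of the second equivalence, at exactly the points where the hypothesis $\ell\in\pi(\G,F)$ is doing real work.

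First, you assert that properness of $\phi(A)$ ``follows from $\ell\nmid|\z(\G)^F|$ combined with the hypotheses bundled in $\pi(\G,F)$, which \dots make $\c_\G^\circ(A)$ a proper Levi whose $\Phi_e$-central torus strictly contains $\z^\circ(\G)_{\Phi_e}$.'' That last claim is the whole content of the step and it is not automatic: $\c_\G^\circ(A)$ being a proper Levi does not by itself force its $\Phi_e$-central torus to exceed $\z^\circ(\G)_{\Phi_e}$. The paper's argument is that $\H:=\c_\G^\circ(A)$ is an $E_{\ell}(q)$-split Levi (Lemma \ref{lem:Abelian subgroups to Levi subgroups}~(iii)); if $\z^\circ(\H)_{\Phi_e}\leq\z(\G)$ then \cite[Lemma 22.3]{Cab-Eng04} forces $\z^\circ(\H)_{\Phi_{e\ell^a}}\leq\z(\G)$ for all $a\geq 0$, hence $\z^\circ(\H)_{\Phi_{E_\ell(q)}}\leq\z(\G)$ and $\H=\G$, contradicting $\H<\G$. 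This appeal to \cite[Lemma 22.3]{Cab-Eng04} is the one place where the primes excluded from $\pi(\G,F)$ are actually used, and it cannot be replaced by a formal argument about properness.

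Second, and more seriously, your route to the commutation $[A,A_0]=1$ does not work. You describe the fibre as $\{A: \z^\circ(\c_\G^\circ(A))_{\Phi_e}\leq \z^\circ(\L)_{\Phi_e}\}$, but the inclusion goes the other way (for $e$-split Levis, smaller Levi corresponds to larger central $\Phi_e$-torus); more importantly, you then propose to show that $A_0=\z^\circ(\L)^F_\ell$ lies \emph{inside} the $\Phi_e$-torus $\z^\circ(\L)_{\Phi_e}$ and deduce $[A,A_0]=1$ from $A$ centralising that torus. The containment $\z^\circ(\L)^F_\ell\leq\z^\circ(\L)_{\Phi_e}$ is false in general: for an $e$-split Levi $\L$, the torus $\z^\circ(\L)$ may carry $\Phi_n$-factors with $n\in E_\ell(q)\setminus\{e\}$, and these also contribute to the $\ell$-part of $\z^\circ(\L)^F$. (This is precisely why Cabanes--Enguehard work with $E_\ell(q)$-split Levis in \cite[Proposition 13.19]{Cab-Eng04}, which the paper's Lemma \ref{lem:Abelian subgroup of e-split Levi subgroup} uses.) The paper's argument avoids this entirely: by Corollary \ref{cor:Abelian subgroups to e-split Levi subgroups}~(ii) every $A$ in the fibre satisfies $A\leq \phi_e(A)\leq\L$, while $A_0\leq\z(\L)$, so $[A,A_0]=1$ is immediate. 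You should switch to this cleaner route; once $[A,A_0]=1$ is known, $\phi_e(AA_0)\leq\phi_e(A_0)=\L$ follows from $\phi_e$ being order-reversing together with Lemma \ref{lem:Abelian subgroup of e-split Levi subgroup}, and the rest of your join-contractibility argument goes through.
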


\begin{proof}

Set $e:=e_{\ell}(q)$ and recall that $\ab_\ell^\star(\G^F)$ is the poset of non-trivial abelian $\ell$-subgroups of $\G^F$. Since $\ab_\ell^\star(\G^F)$ contains the poset of non-trivial elementary abelian $\ell$-subgroups considered by Quillen in \cite{Qui78}, we deduce from \cite[Theorem 2]{The-Web91} that the Brown complex $\Delta(\mathcal{S}_\ell^\star(\G^F))$ is $\G^F$-homotopy equivalent to $\Delta(\ab_\ell^\star(\G^F))$. On the other hand, since opposite posets are associated to homeomorphic simplicial complexes, there exists a $\G^F$-equivariant homeomorphism between $\Delta(\CL_e^\star(\G,F))$ and $\Delta(\CL_e^\star(\G,F)^{\rm op})$ and hence it is enough to show that the simplicial complexes $\Delta(\ab_\ell^\star(\G^F))$ and $\Delta((\CL_e^\star(\G,F))^{\rm op})$ are $\G^F$-homotopy equivalent. To prove the latter statement, we apply the results of Section \ref{sec:Homotopy} and Section \ref{sec:Centralisers of abelian subgroups}. Consider the map
\begin{align*}
\phi_e:\ab_\ell^\star\left(\G^F\right)&\to\CL_e^\star\left(\G,F\right) ^{\rm op}
\\
A &\mapsto \c_\G\left(\z^\circ\left(\c_\G^\circ(A)\right)_{\Phi_e}\right)\nonumber
\end{align*}
and observe that this is a well-defined map of $\G^F$-posets. In fact, according to Corollary \ref{cor:Abelian subgroups to e-split Levi subgroups} we know that $\phi_e(A)$ is an $e$-split Levi subgroup and it is enough to show that $\phi_e(A)$ is strictly contained in $\G$. To prove this fact, we first apply Lemma \ref{lem:Abelian subgroups to Levi subgroups} (iii) to deduce that $\H:=\c_\G^\circ(A)$ is an $E_{\ell}(q)$-split Levi subgroup of $(\G,F)$. Moreover, recalling that $\z(\G)^F_\ell=1$, we observe that $A$ cannot be central in $\G$ and therefore that $\H<\G$. If now we assume $\phi_e(A)=\G$, it follows that $\z^\circ(\H)_{\Phi_e}\leq \z(\G)$ and \cite[Lemma 22.3]{Cab-Eng04} (see also the proof of \cite[Theorem 22.2]{Cab-Eng04}) implies that $\z^\circ(\H)_{\Phi_{e\ell^a}}\leq \z(\G)$ for every $a\geq 0$. But then $\z^\circ(\H)_{\Phi_{E_{\ell}(q)}}\leq \z(\G)$ and, since $\H$ is an $E_{\ell}(q)$-split Levi subgroup of $(\G,F)$, we obtain $\H=\G$. This contradiction shows that $\phi_e(A)$ is strictly contained in $\G$ and hence the map $\phi_e$ is well-defined. Next, we show that $\phi_e$ is a map of $\G^F$-posets. By the definition of $\phi_e$ it follows that $\phi_e(A^g)=\phi_e(A)^g$ for every abelian $\ell$-subgroup $A\leq\G^F$ and $g\in\G^F$. Moreover, let $A\leq A'$ be two abelian $\ell$-subgroups and set $\L:=\c_\G^\circ(A)$ and $\K:=\c_\G^\circ(A')$. Since $\K\leq \L$ are Levi subgroups of $(\G,F)$ (see Lemma \ref{lem:Abelian subgroups to Levi subgroups}), we obtain $\z(\L)\leq \z(\K)$ which implies $\z^\circ(\L)_{\Phi_e}\leq\z^\circ(\K)_{\Phi_e}$ and therefore $\phi_e(A')\leq \phi_e(A)$. This shows that $\phi$ is a map of $\G^F$-posets.

We now want to apply Lemma \ref{lem:Criterion for G-homotopy} to show that $\phi_e$ induces a $\G^F$-homotopy equivalence $\Delta(\phi_e)$. For this purpose, fix an $e$-split Levi subgroup $\L$ of $(\G,F)$ and observe that the elements of the fibre $\mathcal{X}_\L:=\phi_e^{-1}(\CL_e^\star(\G,F)^{\rm op}_{\geq \L})$ are those abelian $\ell$-subgroups $A$ of $\G^F$ that satisfy $\phi_e(A)\leq \L$. We need to prove that $\mathcal{X}_\L$ is $\n_\G(\L)^F$-contractible and we do so by showing that $\mathcal{X}_\L$ is $\n_\G(\L)^F$-join-contractible via $A_0:=\z(\L)^F_\ell$ (see Corollary \ref{cor:Criterion for G-join-contractibility}). First, observe that $A_0$ is an element of the fibre $\mathcal{X}_\L$, since $\phi_e(A_0)=\L$ according to Lemma \ref{lem:Abelian subgroup of e-split Levi subgroup}, and that $A_0$ is fixed by the action of $\n_\G(\L)^F$. Moreover, whenever $A\in\mathcal{X}_\L$, Corollary \ref{cor:Abelian subgroups to e-split Levi subgroups} implies that $A\leq \phi_e(A)\leq \L$ and therefore $[A,A_0]=1$ since $A_0$ centralises $\L$. It follows that the product $AA_0$ is a well-defined abelian $\ell$-subgroup of $\G^F$ that satisfies $\phi_e(AA_0)\leq \L$. This shows that $AA_0$, which is the join of $A$ and $A_0$, is defined in the poset $\mathcal{X}_\L$ for every $A\in\mathcal{X}_\L$ and the proof is now complete.
\end{proof}

The homotopy equivalence constructed above can also be stated in terms of $\Phi_{e_{\ell}(q)}$-tori. In fact, it is not hard to show that, for every set of positive integers $E$, the simplicial complex of $E$-split Levi subgroups of $(\G,F)$ is $\G^F$-homotopy equivalent to that of $\Phi_E$-tori of $(\G,F)$. Recall that the notation $\mathcal{T}^\star_E(\G,F)$ is used to denote the set of $\Phi_E$-tori $\S$ of $(\G,F)$ that properly contain $\z(\G)_{\Phi_E}$.

\begin{prop}
\label{prop:Homotopy equivalence for e-tori}
For every set of positive integers $E$ we have
\[\Delta\left(\CL_E^\star\left(\G,F\right)\right)\simeq_{\G^F}\Delta\left(\mathcal{T}_E^\star\left(\G,F\right)\right).\]
\end{prop}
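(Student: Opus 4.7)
The plan is to apply Lemma \ref{lem:Criterion for G-homotopy} to the map of $\G^F$-posets
\[\phi_E \colon \mathcal{T}_E^\star(\G,F) \longrightarrow \CL_E^\star(\G,F)^{\rm op}, \qquad \S \mapsto \c_\G(\S).\]
Since a poset and its opposite have the same nerve, there is a canonical $\G^F$-equivariant homeomorphism $\Delta(\CL_E^\star(\G,F)) \cong \Delta(\CL_E^\star(\G,F)^{\rm op})$, so it is enough to show that $\Delta(\phi_E)$ is a $\G^F$-homotopy equivalence.

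First I would check that $\phi_E$ is well-defined, $\G^F$-equivariant and monotone. For $\S \in \mathcal{T}_E^\star(\G,F)$ the centraliser $\c_\G(\S)$ is an $E$-split Levi subgroup by definition; if it were equal to $\G$ then $\S \leq \z(\G)$, and since $\S$ is a connected $\Phi_E$-torus this would force $\S \leq \z^\circ(\G)_{\Phi_E}$, contradicting $\S \in \mathcal{T}_E^\star(\G,F)$. Equivariance is immediate, and the order-reversing behaviour of $\c_\G(-)$ becomes order-preserving after passing to the opposite poset.

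To apply Lemma \ref{lem:Criterion for G-homotopy}(i), fix $\L \in \CL_E^\star(\G,F)$ and note that in the opposite order the down-set of $\L$ is $\{\M : \L \leq \M\}$, so the relevant fibre is
\[\mathcal{X}_\L := \bigl\{\S \in \mathcal{T}_E^\star(\G,F) \,:\, \L \leq \c_\G(\S)\bigr\}.\]
The core identification I would establish is
\[\mathcal{X}_\L = \bigl\{\S \in \mathcal{T}_E^\star(\G,F) \,:\, \S \leq \z^\circ(\L)_{\Phi_E}\bigr\}.\]
The inclusion $\supseteq$ is immediate, since any $\S \leq \z^\circ(\L)_{\Phi_E}$ centralises $\L$. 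For $\subseteq$, if $\L \leq \c_\G(\S)$ then $\S \leq \z(\c_\G(\S)) \leq \z(\L)$, and being a connected $\Phi_E$-torus this places $\S$ inside $\z^\circ(\L)_{\Phi_E}$.

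Once the fibre has this shape, contractibility comes for free. The standard identity $\L = \c_\G(\z^\circ(\L)_{\Phi_E})$ together with $\L < \G$ forces $\z^\circ(\L)_{\Phi_E}$ to properly contain $\z^\circ(\G)_{\Phi_E}$, so it lies in $\mathcal{T}_E^\star(\G,F)$ and is the maximum of $\mathcal{X}_\L$; it is clearly $\n_\G(\L)^F$-stable. Consequently $\mathcal{X}_\L$ is $\n_\G(\L)^F$-join-contractible via $\z^\circ(\L)_{\Phi_E}$ (every join with the maximum equals the maximum), and Corollary \ref{cor:Criterion for G-join-contractibility} gives $\n_\G(\L)^F$-contractibility, so Lemma \ref{lem:Criterion for G-homotopy}(i) produces the desired $\G^F$-homotopy equivalence. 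The only real source of friction is the bookkeeping with the opposite order and the correct reading of the fibre; once the fibre is recognised as a principal down-set with a canonical $\n_\G(\L)^F$-fixed maximum, no genuine obstacle remains.
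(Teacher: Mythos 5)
Your proof is correct, but it takes a genuinely different route through Lemma \ref{lem:Criterion for G-homotopy} than the paper does. You invoke variant (i), so your fibre over $\L$ in the opposite poset is $\{\S : \L \leq \c_\G(\S)\}$, which you then identify as the principal down-set $\{\S \in \mathcal{T}_E^\star(\G,F) : \S \leq \z^\circ(\L)_{\Phi_E}\}$; contractibility is then automatic from the existence of the $\n_\G(\L)^F$-fixed maximum $\z^\circ(\L)_{\Phi_E}$. The paper instead uses variant (ii), so its fibre is $\{\S : \c_\G(\S) \leq \L\}$, and contractibility is proved by exhibiting $\S_0 = \z^\circ(\L)_{\Phi_E}$ as a join-contracting element: for any $\S$ in the fibre one has $\S \leq \c_\G(\S) \leq \L$, hence $[\S,\S_0]=1$, hence $\S\S_0$ is again a $\Phi_E$-torus lying in the fibre. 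Both arguments are valid. Your identification of the fibre as a down-set requires the extra observation that $\L \leq \c_\G(\S)$ forces $\z(\c_\G(\S)) \leq \z(\L)$ for the two Levi subgroups involved (which the paper does use elsewhere, in the proof of Lemma \ref{lem:Properties of e-closure}(i), and is standard for nested Levis), whereas the paper's fibre admits join-contractibility directly without any such identification. What the paper's choice buys is uniformity: it is the same style of argument used to prove the central Theorem \ref{thm:Homotopy equivalence for e-split Levi}, where the fibre is not so easily recognisable as a principal down-set and only the join-contractibility device is available. What your choice buys is a shorter, more transparent proof for this particular proposition, since a poset with a $G$-fixed maximum is $G$-contractible essentially by inspection.
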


\begin{proof}
Arguing as in the proof of Theorem \ref{thm:Homotopy equivalence for e-split Levi}, it is enough to show that the simplicial complex $\Delta(\mathcal{T}_E^\star(\G,F))$ is $\G^F$-homotopy equivalent to $\Delta(\CL_E^\star(\G,F)^{\rm op})$. To construct this equivalence, we consider the map of $\G^F$-poset $\phi:\mathcal{T}_E^\star(\G,F)\to\CL_E^\star(\G,F)^{\rm op}$ that sends a $\Phi_E$-torus $\S$ to its centraliser $\c_\G(\S)$. Observe that $\c_\G(\S)$ is an $E$-split Levi subgroup of $(\G,F)$ and is a proper subgroup of $\G$ because $\z(\G)_{\Phi_E}$ is properly contained in $\S$. Now let us fix an $E$-split Levi subgroup $\L$ of $(\G,F)$, set $\S_0:=\z^\circ(\L)_{\Phi_E}$ and denote by $\mathcal{X}_\L$ the fibre $\phi^{-1}(\CL_E^\star(\G,F)_{\geq \L}^{\rm op})$. If $\S\in\mathcal{X}_\L$, then $\S\leq \c_\G(\S)\leq \L$ and $[\S,\S_0]=1$ since $\S_0$ is central in $\L$. In particular, $\S\S_0$ is a $\Phi_E$-torus of $(\G,F)$ satisfying $\phi(\S\S_0)\leq \L$ and it follows from Corollary \ref{cor:Criterion for G-join-contractibility} that $\mathcal{X}_\L$ is $\n_\G(\L)^F$-contractible. Now the result follows from Lemma \ref{lem:Criterion for G-homotopy}.
\end{proof}


\subsection{Homological generic Sylow theorem}

The Brown complex of a finite group was first studied by K. S. Brown in \cite{Bro75} where he proved a homological version of the third Sylow theorem. More precisely, he showed that for every prime number $\ell$ dividing the order of a finite group $G$, the Euler characteristic $\chi(\Delta(\mathcal{S}_\ell^\star(G)))$ is congruent to $1$ modulo the order of a Sylow $\ell$-subgroup. On the other hand, Boru\'e and Malle developed a generic version of the Sylow theorems for finite reductive groups. In this section, by exploiting the homotopy equivalence constructed in Theorem \ref{thm:Homotopy equivalence for e-split Levi}, we obtain an analogous congruence involving the euler characteristic of the simplicial complex of $e$-split Levi subgroups when considering large primes. This gives evidence for a homological generic Sylow theorem.

We say that a prime $\ell$ is \textit{large} for $(\G,F)$ if there exists a unique positive integer $e$ such that $\Phi_{e}(x)$ divides the order polynomial $P_{(\G,F)}(x)$ and $\ell$ divides $\Phi_{e}(q)$ (see \cite[Definition 5.1]{Bro-Mal-Mic93}). In this case, we also say that $\ell$ is \textit{$(\G, F, e)$-adapted} (see \cite[Definition 5.3]{Bro-Mal-Mic93}). Observe that if $\ell$ is large for $(\G,F)$, then $\ell$ is good for $\G$ and does not divide $|\z(\G)^F:\z^\circ(\G)^F|$ nor $|\z(\G^*)^F:\z^\circ(\G^*)^F|$ according to \cite[Proposition 5.2]{Bro-Mal-Mic93}. Before proving our next result, we point out that Theorem \ref{thm:Homotopy equivalence for e-split Levi} holds for large primes $\ell$ not dividing the order of $\z(\G)^F$.

\begin{rmk}
\label{rmk:Large primes}
In the proof of Theorem \ref{thm:Homotopy equivalence for e-split Levi} we use the hypothesis $\ell\in\pi'(\G,F)$ only when invoking \cite[Lemma 22.3]{Cab-Eng04}. This lemma is used to show that if $\H$ is an $F$-stable Levi subgroup of $\G$ satisfying $\z^\circ(\H)_{\Phi_e}\leq \z(\G)$, then we get $\z^\circ(\H)_{\Phi_{e\ell^a}}\leq \z(\G)$ for every $a\geq 0$. However, if $\ell$ is large for $(\G,F)$ and $(\G,F,e)$-adapted, then we get $\z^\circ(\H)_{\Phi_{e}}=\z^\circ(\H)_{\Phi_{e\ell^a}}$ for every $a \geq 0$ and the implication above trivially holds.
\end{rmk}

\begin{cor}
\label{cor:Homological Sylow and e-split Levi}
Suppose that $\ell$ is large for $(\G,F)$ and does not divide the order of $\z(\G)^F$. Then
\[\chi\left(\Delta\left(\CL_{e_{\ell}(q)}^\star\left(\G,F\right)\right)\right)\equiv 1 \enspace\left({\rm mod}\enspace\left(\Phi_{e_{\ell}(q)}(q)^a\right)_\ell\right)\]
where $a$ is the $\Phi_{e_{\ell}(q)}$-valuation of the order polynomial of $(\G,F)$.
\end{cor}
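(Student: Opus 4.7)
The plan is to combine Brown's classical homological Sylow theorem with the homotopy equivalence given by Theorem \ref{thm:Homotopy equivalence for e-split Levi}, using Remark \ref{rmk:Large primes} to justify that the latter applies under the weaker largeness hypothesis.

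First I would verify the applicability of Theorem \ref{thm:Homotopy equivalence for e-split Levi} in the setting of the corollary. By \cite[Proposition 5.2]{Bro-Mal-Mic93}, the assumption that $\ell$ is large for $(\G,F)$ already implies that $\ell$ is good for $\G$ and coprime to $|\z(\G)^F\colon\z^\circ(\G)^F|$ and $|\z(\G^*)^F\colon\z^\circ(\G^*)^F|$. Combined with the extra hypothesis $\ell\nmid |\z(\G)^F|$, this gives every input needed in the proof of Theorem \ref{thm:Homotopy equivalence for e-split Levi} except possibly the step invoking \cite[Lemma 22.3]{Cab-Eng04}, which is exactly the point addressed by Remark \ref{rmk:Large primes}: when $\ell$ is $(\G,F,e)$-adapted one has $\z^\circ(\H)_{\Phi_e}=\z^\circ(\H)_{\Phi_{e\ell^a}}$ for every $a\geq 0$, so the statement required from loc.\ cit.\ becomes a tautology. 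Hence there is a $\G^F$-homotopy equivalence
\[\Delta\left(\mathcal{S}^\star_\ell(\G^F)\right)\simeq_{\G^F}\Delta\left(\CL_{e_{\ell}(q)}^\star(\G,F)\right),\]
and in particular the two sides share the same Euler characteristic.

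Next I would invoke Brown's homological Sylow theorem \cite[Corollary 2]{Bro75}, according to which
\[\chi\left(\Delta\left(\mathcal{S}^\star_\ell(\G^F)\right)\right)\equiv 1 \pmod{|\G^F|_\ell}.\]
It remains to identify $|\G^F|_\ell$ with $(\Phi_{e_{\ell}(q)}(q)^a)_\ell$. Writing the order polynomial as $P_{(\G,F)}(x)=x^N\prod_{e'}\Phi_{e'}(x)^{a(e')}$, we have $|\G^F|=q^N\prod_{e'}\Phi_{e'}(q)^{a(e')}$. Since $\ell$ is large for $(\G,F)$ and $(\G,F,e_{\ell}(q))$-adapted, $\ell$ divides none of the factors $\Phi_{e'}(q)$ for $e'\neq e_{\ell}(q)$ with $\Phi_{e'}\mid P_{(\G,F)}$, and $\ell\nmid q$ because $\ell\mid\Phi_{e_{\ell}(q)}(q)$. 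Consequently
\[|\G^F|_\ell=\left(\Phi_{e_{\ell}(q)}(q)^a\right)_\ell,\]
where $a=a(e_{\ell}(q))$ is the $\Phi_{e_{\ell}(q)}$-valuation of $P_{(\G,F)}$. Combining the three displays and using $\chi(\Delta(\mathcal{S}^\star_\ell(\G^F)))=\chi(\Delta(\CL_{e_{\ell}(q)}^\star(\G,F)))$ yields the stated congruence.

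There is essentially no hard step here: the only nuance is to check that Theorem \ref{thm:Homotopy equivalence for e-split Levi} does hold under the largeness assumption (where Remark \ref{rmk:Large primes} supplies the missing input) and to keep track of the edge case where $\Delta(\mathcal{S}^\star_\ell(\G^F))$ is empty, which with the convention $\chi(\emptyset)=0$ forces $|\G^F|_\ell=1$, and the congruence becomes trivial; in every other case both Euler characteristics agree and Brown's theorem supplies the desired modular identity.
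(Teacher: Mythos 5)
Your overall route is the same as the paper's: combine the $\G^F$-homotopy equivalence of Theorem \ref{thm:Homotopy equivalence for e-split Levi} (justified under the largeness hypothesis via Remark \ref{rmk:Large primes}) with Brown's congruence, and identify $|\G^F|_\ell$ with $(\Phi_{e_{\ell}(q)}(q)^a)_\ell$. The identification you give by factoring the order polynomial is essentially \cite[Corollary 3.13 (ii)]{Bro-Mal92}, so there is no substantive difference there. There is, however, a real gap.

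Throughout your argument you assume, without justification, that $\ell$ is $(\G,F,e_{\ell}(q))$-adapted. The hypothesis only says that $\ell$ is large for $(\G,F)$, which means $\ell$ is $(\G,F,e_0)$-adapted for a \emph{unique} $e_0$ with $\Phi_{e_0}\mid P_{(\G,F)}$ and $\ell\mid\Phi_{e_0}(q)$. Since the integers $n$ with $\ell\mid\Phi_n(q)$ are exactly those of the form $e_{\ell}(q)\ell^a$ (for $\ell$ odd), a priori $e_0$ could equal $e_{\ell}(q)\ell^a$ for some $a\geq 1$, in which case $\Phi_{e_{\ell}(q)}\nmid P_{(\G,F)}$. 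In that situation Remark \ref{rmk:Large primes} does not supply the needed input to the proof of Theorem \ref{thm:Homotopy equivalence for e-split Levi} (the Remark is stated for $\ell$ that is $(\G,F,e_{\ell}(q))$-adapted), your identification $|\G^F|_\ell=(\Phi_{e_{\ell}(q)}(q)^a)_\ell$ fails because $a=0$ while $\ell$ does divide $|\G^F|$, and $\Delta(\CL_{e_{\ell}(q)}^\star(\G,F))$ is empty even though $\Delta(\mathcal{S}_\ell^\star(\G^F))$ is not. The paper addresses exactly this by a preliminary reduction: either $e_0=e_{\ell}(q)$, or else $a=0$, both sides are trivial (the Euler characteristic is $0$ by the convention $\chi(\emptyset)=0$ and the modulus is $1$), and one does not invoke the theorem at all. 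You need that reduction, or else a proof that $e_0=e_{\ell}(q)$ always holds under the stated hypotheses.

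Relatedly, the edge case you flag at the end is the wrong one. If $\ell$ is large for $(\G,F)$, there exists by definition some $e_0$ with $\Phi_{e_0}\mid P_{(\G,F)}$ and $\ell\mid\Phi_{e_0}(q)$, so $\ell$ divides $|\G^F|$ and $\Delta(\mathcal{S}_\ell^\star(\G^F))$ is never empty. The degenerate case to worry about is $\Delta(\CL_{e_{\ell}(q)}^\star(\G,F))=\emptyset$, which is precisely the $e_0\neq e_{\ell}(q)$ scenario above.
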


\begin{proof}
Set $e:=e_{\ell}(q)$. We may assume without loss of generality that our prime $\ell$ is $(\G,F,e)$-adapted. In fact, since $\ell$ divides $\Phi_e(q)$, if $\ell$ is $(\G,F,e_0)$-adapted for some positive integer $e_0$ then either $e=e_0$ or $\Phi_e$ does not divide the order polynomial of $(\G,F)$. In the latter case, we conclude that $a(e)=0$ and there are no non-trivial $\Phi_e$-subgroups of $(\G,F)$. Thus $\chi(\mathcal{T}_e(\G,F))=0$, $\Phi^{a(e)}_e(q)_\ell=1$ and the result follows trivially. Next, observe that the Euler characteristic $\chi(\Delta)$ of a simplicial complex $\Delta$ only depends on the homology of $\Delta$ and hence it is a homotopy invariant. Therefore, applying Theorem \ref{thm:Homotopy equivalence for e-split Levi} (which holds under our assumption thanks to Remark \ref{rmk:Large primes}) we get
\begin{equation}
\label{eq:Homological Sylow and e-split Levi, 1}
\chi\left(\Delta\left(\CL_e^\star(\G,F)\right)\right)=\chi\left(\Delta\left(\mathcal{S}_\ell^\star\left(\G^F\right)\right)\right).
\end{equation}
On the other hand, we know from \cite[Corollary 3.13 (ii)]{Bro-Mal92} that
\begin{equation}
\label{eq:Homological Sylow and e-split Levi, 2}
\left|\G^F\right|_\ell=\Phi_e^{a}(q)_\ell
\end{equation}
where $a$ is the $\Phi_{e_{\ell}(q)}$-valuation of the order polynomial of $(\G,F)$. The result now follows from \eqref{eq:Homological Sylow and e-split Levi, 1} and \eqref{eq:Homological Sylow and e-split Levi, 2} as a consequence of Brown's theorem \cite[Corollary 2]{Bro75}.
\end{proof}

Observe that by using Proposition \ref{prop:Homotopy equivalence for e-tori} we immediately deduce that the congruence of the above corollary could also be stated in terms of the simplicial complex of $\Phi_{e_{\ell}(q)}$-tori as
\[\chi\left(\Delta\left(\mathcal{T}_{e_{\ell}(q)}^\star\left(\G,F\right)\right)\right)\equiv 1 \enspace\left({\rm mod}\enspace\left(\Phi_{e_{\ell}(q)}(q)^a\right)_\ell\right).\]
We conclude this section with an alternative proof of the above corollary which involves the reduced Lefschetz module (an analogue of the Steinberg module) and is basically due to Quillen.

\begin{rmk}
The above congruence can also be obtained as a consequence of Quillen's result on the projectivity of the generalised Steinberg module. For this purpose, set $e=e_{\ell}(q)$ and define the \textit{reduced Lefschetz module} (see \cite[Definition 6.3.2]{Ben98}) of $\Delta(\CL_e^\star(\G,F))$ over an algebraically closed field $\overline{\mathbb{F}}_\ell$ of characteristic $\ell$
\[{\rm St}_e(\G,F):=\tilde{L}_{\G^F}\left(\Delta\left(\CL_e^\star(\G,F)\right),\overline{\mathbb{F}}_\ell\right)\]
whose dimension is $\chi(\Delta(\CL_e^\star(\G,F)))-1$. If we now assume that $\ell$ is large for $(\G,F)$ and does not divide the order of $\z(\G)^F$, then Theorem \ref{thm:Homotopy equivalence for e-split Levi} (together with Remark \ref{rmk:Large primes}) implies that ${\rm St}_e(\G,F)$ coincides with the \textit{generalised Steinberg module} ${\rm St}_\ell(\G^F)$ as defined in \cite[Definition 6.7.1]{Ben98}. On the other hand, by applying \cite[Corollary 4.3]{Qui78} we deduce that ${\rm St}_\ell(\G^F)$ is a virtual projective module and hence its dimension is a multiple of $|\G^F|_\ell$. This shows that that the order of a Sylow $\ell$-subgroup of $\G^F$ divides $\chi(\CL_e^\star(\G,F))-1$. Corollary \ref{cor:Homological Sylow and e-split Levi} now follows from \cite[Corollary 3.13 (ii)]{Bro-Mal92} which shows that \eqref{eq:Homological Sylow and e-split Levi, 2} holds.
\end{rmk}

\subsection{Sharp homology decompositions}
\label{sec:Homology decompositions}

The idea of decomposing the classifying space of a finite group $G$, up to mod $\ell$ homology, in terms of classifying spaces of subgroups of $G$ was systematically studied by Dwyer in \cite{Dwy97} building on previous works of Brown \cite{Bro79}, Webb \cite{Web87}, Jackowski--McClure \cite{JM92}, and Jackowski--McClure--Oliver \cite{JMO92}. The exact definition of a \textit{homology decomposition} can be found in \cite{Dwy97} and is related to the notion of \textit{ampleness} (see \cite[Definition 1.2]{Dwy97}). Here, we consider an extension of Dwyer's definition to more general simplicial complexes.

\begin{defin}[{{\cite[Definition 5.6.6]{Ben-Smi08}}}]
Let $G$ be a finite group and $\Delta$ a $G$-simplicial complex. Denote by $\star$ the one point space with trivial $G$-action and by $(-)_{\rm hG}$ the Borel construction \cite[Definition 2.5.9]{Ben-Smi08}. We say that $\Delta$ is \textit{ample} (with respect to the prime $\ell$) if the map
\[(\Delta)_{\rm hG}\to(\star)_{\rm hG}\]
given by $\Delta\to \star$ induces an isomorphism on mod $\ell$ homology.
\end{defin}

For every ample $G$-simplicial complex we get three homology decompositions: the \textit{centralizer decomposition}, the \textit{subgroup decomposition}, and the \textit{normalizer decomposition} (see \cite[Theorems 1.4, 1.6, 1.8]{Dwy97} or \cite[Theorem 5.6.10]{Ben-Smi08}). Moreover, observe that the notion of ampleness is a homotopy invariant of the simplicial complex $\Delta$ (see, for instance, \cite[Lemma 5.6.8]{Ben-Smi08}). Since the Brown complex $\Delta(\mathcal{S}_\ell^\star(G))$ is ample, with respect to the prime $\ell$, according to \cite[Theorem 5.1.3 and Proposition 5.6.7]{Ben-Smi08}, we get the following consequence of Theorem \ref{thm:Homotopy equivalence for e-split Levi}.

\begin{cor}
\label{cor:Ampleness}
Suppose that $\ell\in\pi(\G,F)$ does not divide the order of $\z(\G)^F$. Then the $\G^F$-simplicial complex $\Delta(\CL_{e_{\ell}(q)}^\star(\G,F))$ is ample with respect to the prime $\ell$.
\end{cor}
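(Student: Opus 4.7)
The plan is to obtain ampleness of $\Delta(\CL_{e_{\ell}(q)}^\star(\G,F))$ by transport along the equivariant homotopy equivalence constructed in Theorem \ref{thm:Homotopy equivalence for e-split Levi}. The groundwork is already in place in the paragraph preceding the statement: the Brown complex $\Delta(\mathcal{S}_\ell^\star(\G^F))$ is ample with respect to $\ell$ by \cite[Theorem 5.1.3 and Proposition 5.6.7]{Ben-Smi08}, and ampleness is a homotopy invariant of a $G$-simplicial complex by \cite[Lemma 5.6.8]{Ben-Smi08}.

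The proof would proceed in essentially one step. First, I would invoke Theorem \ref{thm:Homotopy equivalence for e-split Levi}, whose hypotheses match those of the corollary ($\ell\in\pi(\G,F)$ and $\ell\nmid|\z(\G)^F|$), to obtain a $\G^F$-homotopy equivalence
\[
\Delta\left(\mathcal{S}_\ell^\star\left(\G^F\right)\right)\simeq_{\G^F}\Delta\left(\CL_{e_{\ell}(q)}^\star\left(\G,F\right)\right).
\]
Next, since the Borel construction $(-)_{\rm hG}$ preserves $G$-homotopy equivalences and hence also the induced maps to $(\star)_{\rm hG}$ up to homotopy, the map
\[
\Delta\left(\CL_{e_{\ell}(q)}^\star\left(\G,F\right)\right)_{\rm h\G^F}\to(\star)_{\rm h\G^F}
\]
induces the same map on mod $\ell$ homology as the corresponding map for the Brown complex; the latter is an isomorphism by ampleness of $\Delta(\mathcal{S}_\ell^\star(\G^F))$. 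This is precisely the content of \cite[Lemma 5.6.8]{Ben-Smi08}, so one simply cites it.

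There is no real obstacle here: the entire content of the corollary is packaged in the phrase \emph{ampleness is a homotopy invariant}, once the $\G^F$-homotopy equivalence from Theorem \ref{thm:Homotopy equivalence for e-split Levi} has been produced. The only thing to check is that the hypotheses of that theorem are exactly the hypotheses of the corollary, which they are. Accordingly the write-up should be just a couple of sentences chaining the three ingredients together.
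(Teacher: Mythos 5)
Your proposal is correct and matches the paper's argument exactly: the paper proves the corollary by observing, in the preceding paragraph, that ampleness is a homotopy invariant (\cite[Lemma 5.6.8]{Ben-Smi08}), that the Brown complex is ample (\cite[Theorem 5.1.3 and Proposition 5.6.7]{Ben-Smi08}), and then applying the $\G^F$-homotopy equivalence of Theorem \ref{thm:Homotopy equivalence for e-split Levi}. Nothing is missing and no alternative route is taken.
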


Next, recall that a homology decomposition is called \textit{sharp} if the corresponding Bousfield--Kan cohomology spectral sequence of the homotopy colimit satisfies $E^2_{i,j}=0$ for every $i>0$ and every $j$ (see \cite{Dwy98} for further details). A simplicial complex $\Delta$ is called \textit{centraliser sharp}, \textit{subgroup sharp}, or \textit{normalizer sharp} (with respect to $\ell$) if it is ample (with respect to $\ell$) and the corresponding homology decomposition is sharp in the sense described above. This notion was introduced by Dwyer \cite{Dwy98} and further studied by Grodal \cite{Gro02} and Grodal--Smith \cite{Gro-Smi06}. Unlike the notion of ampleness, being centraliser sharp or subgroup sharp is not a homotopy invariant. However, according to \cite[Corollary 7.2]{Gro02} (see also \cite[Theorem 5.8.11]{Ben-Smi08}) this is true in the case of normaliser sharpness. Since the Brown complex $\Delta(\mathcal{S}_\ell^\star(G))$ is normalizer sharp with respect to the primes $\ell$ (see, for instance, \cite[Theorem 5.8.8]{Ben-Smi08}) our homotopy equivalence from Theorem \ref{thm:Homotopy equivalence for e-split Levi} implies the following.

\begin{cor}
\label{cor:Sharpness}
Suppose that $\ell\in\pi(\G,F)$ does not divide the order of $\z(\G)^F$. Then, the $\G^F$-simplicial complex $\Delta(\CL_{e_{\ell}(q)}^\star(\G,F))$ is normaliser sharp with respect to the primes $\ell$.
\end{cor}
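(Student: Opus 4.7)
The plan is to deduce the statement by composing three ingredients that have either just been proved or are already available in the discussion preceding the corollary. The first is the $\G^F$-homotopy equivalence
\[\Delta\left(\mathcal{S}_\ell^\star\left(\G^F\right)\right)\simeq_{\G^F}\Delta\left(\CL_{e_{\ell}(q)}^\star\left(\G,F\right)\right)\]
supplied by Theorem \ref{thm:Homotopy equivalence for e-split Levi} under the current hypotheses on $\ell$. The second is the normaliser sharpness of the Brown complex $\Delta(\mathcal{S}_\ell^\star(\G^F))$ with respect to the prime $\ell$, recorded in \cite[Theorem 5.8.8]{Ben-Smi08}. The third is the fact that, among ample $G$-simplicial complexes, normaliser sharpness is a $G$-homotopy invariant; this is \cite[Corollary 7.2]{Gro02}, restated as \cite[Theorem 5.8.11]{Ben-Smi08}.

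First I would verify that the defining prerequisite of normaliser sharpness, namely ampleness of $\Delta(\CL_{e_{\ell}(q)}^\star(\G,F))$ with respect to $\ell$, is already in hand: this is precisely Corollary \ref{cor:Ampleness}. Next I would invoke the $\G^F$-homotopy equivalence of Theorem \ref{thm:Homotopy equivalence for e-split Levi} and transfer the normaliser sharpness from the Brown complex to $\Delta(\CL_{e_{\ell}(q)}^\star(\G,F))$ by means of the homotopy invariance result cited above. This step is essentially a black box application: the hypotheses of \cite[Corollary 7.2]{Gro02} require an equivariant homotopy equivalence of two ample $G$-simplicial complexes, both of which we now have.

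The main (and effectively only) point worth verifying is that the map $\Delta(\phi_e)$ produced in the proof of Theorem \ref{thm:Homotopy equivalence for e-split Levi} is a $\G^F$-homotopy equivalence in the precise sense demanded by the invariance theorem. Since $\simeq_{\G^F}$ was defined in Section \ref{sec:Homotopy} exactly as equivariant homotopy equivalence of $G$-simplicial complexes in the sense of \cite[Definition 6.4.1]{Ben98}, this compatibility is automatic. Consequently no genuine obstacle is expected, and the corollary follows by direct composition of the three cited ingredients.
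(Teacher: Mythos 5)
Your proposal matches the paper's argument exactly: Theorem \ref{thm:Homotopy equivalence for e-split Levi} supplies the $\G^F$-homotopy equivalence, \cite[Theorem 5.8.8]{Ben-Smi08} gives normaliser sharpness of the Brown complex, and \cite[Corollary 7.2]{Gro02} (also \cite[Theorem 5.8.11]{Ben-Smi08}) provides the needed homotopy invariance of normaliser sharpness for ample complexes, with ampleness coming from Corollary \ref{cor:Ampleness}. Nothing to add.
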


The above result leads to further natural questions. For instance, whether the above simplicial complex is centraliser sharp or subgroup sharp. Moreover, it would be interesting to know if the restrictions on the prime $\ell$ can be relaxed. Finally, we mention that using recent results of Grodal \cite{Gro23}, the $\G^F$-homotopy equivalence of Theorem \ref{thm:Homotopy equivalence for e-split Levi} provides a way to study endotrivial modules for finite reductive groups (in non-defining characteristic $\ell\in\pi(\G,F)$) in terms of $e_{\ell}(q)$-split Levi subgroups (or,equivalently, of $\Phi_{e_{\ell}(q)}$-tori according to Proposition \ref{prop:Homotopy equivalence for e-tori}).

\subsection{Alperin's Weight Conjecture}
\label{sec:AWC}

In \cite{Alp87}, inspired by the weight theory for the modular representations of finite reductive groups in defining characteristic, Alperin introduced his celebrated Weight Conjecture. If $G$ is a finite group and $\ell$ a prime, we call a pair $(Q,\vartheta)$ an \textit{$\ell$-weight} of $G$ if $Q$ is an $\ell$-subgroup of $G$ and $\vartheta$ is an irreducible character of $\n_G(Q)/Q$ of $\ell$-defect zero, that is $\vartheta(1)_\ell=|\n_G(Q):Q|_\ell$.

\begin{alp}
\label{conj:Alperin}
The number of irreducible $\ell$-Brauer characters of $G$ equals the number of conjugacy classes of $\ell$-weights of $G$.
\end{alp}

A major breakthrough towards the understanding of the above statement was made by Kn\"orr and Robinson in \cite{Kno-Rob89} where they presented a reformulation in terms of the Brown complex of $\ell$-subgroups. In particular, this allows one to show that such a conjecture only depends on the homotopy type of the Brown complex. This fact is even more explicit in the treatment of the Kn\"orr--Robinson reformulation given by Webb \cite[Section 6]{Web87}. This can be stated by saying that
\begin{equation}
\label{eq:Alperin Webb}
\k^0(G)=-l_\ell\left(\tilde{\Lambda}(\Delta(\mathcal{S}_\ell^\star(G)))\right)
\end{equation}
where $\k^0(G)$ denotes the number of irreducible characters of $G$ of $\ell$-defect zero and $\tilde{\Lambda}$ is the \textit{reduced Lefschetz invariant} in the Burnside ring $b(G)$ \cite[Definition 6.3.3]{Ben98}. Here, $l_\ell(H)$ denotes the number of irreducible $\ell$-Brauer characters of a finite group $H$ and we regard $l_\ell$ as a function on the Burnside ring $b(G)$ (see \cite[Section 6]{Web87} and \cite[Section 6.9]{Ben98} for further details). Since $\tilde{\Lambda}$ is a homotopy invariant, the term on the right-hand side of \eqref{eq:Alperin Webb} can be equivalently stated in terms of various subgroup complexes. In particular, we can restate Alperin's Weight Conjecture for finite reductive groups in terms of the simplicial complex of $e_{\ell}(q)$-split Levi subgroups.

\begin{cor}
\label{cor:Alperin Webb}
Suppose that $\ell\in\pi(\G,F)$ does not divide the order of $\z(\G)^F$. Then the Kn\"orr--Robinson reformulation of Alperin's Weight Conjecture (in the sense of Webb \eqref{eq:Alperin Webb}) holds for $\G^F$ at the prime $\ell$ if and only if
\[\k^0\left(\G^F\right)=-l_\ell\left(\tilde{\Lambda}\left(\Delta\left(\CL_{e_{\ell}(q)}^\star(\G,F)\right)\right)\right).\]
\end{cor}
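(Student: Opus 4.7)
The plan is to invoke Webb's formulation of the Knörr--Robinson reformulation (equation \eqref{eq:Alperin Webb} in the excerpt) and then transport the identity from the Brown complex to the simplicial complex of $e_\ell(q)$-split Levi subgroups via the $\G^F$-homotopy equivalence of Theorem \ref{thm:Homotopy equivalence for e-split Levi}. The only analytic ingredient I need beyond that theorem is that the reduced Lefschetz invariant $\tilde{\Lambda}_{\G^F}(-)\in b(\G^F)$ is a $\G^F$-homotopy invariant, so both sides of \eqref{eq:Alperin Webb} are unchanged when the Brown complex is replaced by any $\G^F$-homotopy equivalent complex.

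More concretely, I would proceed in three short steps. First, by the hypothesis $\ell\in\pi(\G,F)$ with $\ell\nmid|\z(\G)^F|$, Theorem \ref{thm:Homotopy equivalence for e-split Levi} provides a $\G^F$-homotopy equivalence
\[
\Delta\left(\mathcal{S}_\ell^\star(\G^F)\right)\simeq_{\G^F}\Delta\left(\CL_{e_\ell(q)}^\star(\G,F)\right).
\]
Second, the reduced Lefschetz invariant $\tilde{\Lambda}_{\G^F}$ is an invariant of the $\G^F$-homotopy type of a finite $\G^F$-simplicial complex (this is the standard fact underlying \cite[Definition 6.3.3]{Ben98}, and is exactly the property exploited by Webb in \cite[Section 6]{Web87}), so the two complexes above have the same reduced Lefschetz invariant as elements of the Burnside ring $b(\G^F)$. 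Third, $l_\ell$ is a ring-theoretic function on $b(\G^F)$, so applying it yields
\[
-l_\ell\bigl(\tilde{\Lambda}_{\G^F}(\Delta(\mathcal{S}_\ell^\star(\G^F)))\bigr)=-l_\ell\bigl(\tilde{\Lambda}_{\G^F}(\Delta(\CL_{e_\ell(q)}^\star(\G,F)))\bigr).
\]
Combining this with Webb's reformulation \eqref{eq:Alperin Webb}, which asserts that the Knörr--Robinson version of Alperin's Weight Conjecture for $\G^F$ at $\ell$ is equivalent to $k^0(\G^F)=-l_\ell(\tilde{\Lambda}_{\G^F}(\Delta(\mathcal{S}_\ell^\star(\G^F))))$, gives the claimed equivalence.

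There is essentially no hard step here; the work has been done in Theorem \ref{thm:Homotopy equivalence for e-split Levi}. The only point worth double-checking is that Webb's invariant $\tilde{\Lambda}_{\G^F}$ genuinely descends through the $\G^F$-homotopy equivalence we constructed, which in turn comes down to verifying that the $G$-homotopy invariance of $\tilde{\Lambda}_G$ in \cite[Section 6]{Web87} and \cite[Section 6.3]{Ben98} applies to the concrete equivalence produced by Lemma \ref{lem:Criterion for G-homotopy}. This is automatic because $\tilde{\Lambda}_G$ is defined via the fixed-point data $\chi(\Delta^H)$ over subgroups $H\leq G$, which are preserved by any $G$-homotopy equivalence. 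Once this is noted, the corollary follows by direct substitution and no further calculation is required.
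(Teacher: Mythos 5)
Your proposal is correct and follows the same route the paper takes: apply the $\G^F$-homotopy equivalence from Theorem \ref{thm:Homotopy equivalence for e-split Levi}, use that the reduced Lefschetz invariant $\tilde{\Lambda}$ in the Burnside ring is a $\G^F$-homotopy invariant, and substitute into Webb's formulation \eqref{eq:Alperin Webb}. The paper in fact states the corollary without a separate proof, relying exactly on the observation you spell out, so your added verification (that $\tilde{\Lambda}$ is defined via fixed-point Euler characteristics $\chi(\Delta^H)$ and hence descends along $G$-homotopy equivalences) is a faithful elaboration of the intended argument.
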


Another approach to Alperin's Weight Conjecture was introduced by Th\'evenaz \cite{The93} in relation to the equivariant $K$-theory of the Brown complex. For every finite group $G$ and $G$-simplicial complex $\Delta$, define the \textit{equivariant Euler characteristic} of $\Delta$ as
\[\chi_G(\Delta):=\dim\left(\mathbb{Q}\otimes K_G^0(\Delta)\right)-\dim\left(\mathbb{Q}\otimes K_G^1(\Delta)\right)\]
where $K_G^0$ and $K_G^1$ are the equivariant $K$-theory groups and $\Delta$ is interpreted as a compact $G$-space (see \cite{The93} for further details). Th\'evenaz proved that the Kn\"orr--Robinson reformulation of Alperin's Weight Conjecture can be restated as
\begin{equation}
\label{eq:Alperin Thevenaz}
\k\left(G\right)-\k^0\left(G\right)=\chi_G\left(\Delta\left(\mathcal{S}_\ell^\star(G)\right)\right)
\end{equation}
where $\k(G)$ denotes the number of irreducible characters of $G$. Once again, observe that the right-hand side of the above equation is invariant under equivariant homotopy equivalences. Since the homotopy equivalence constructed in Theorem \ref{thm:Homotopy equivalence for e-split Levi} is a $\G^F$-homotopy equivalence, we can restate the right-hand side of \eqref{eq:Alperin Thevenaz} for finite reductive groups by using the simplicial complex of $e_{\ell}(q)$-split Levi subgroups.

\begin{cor}
\label{cor:Alperin Thevenaz}
Suppose that $\ell\in\pi(\G,F)$ does not divide the order of $\z(\G)^F$. Then the Kn\"orr--Robinson reformulation of Alperin's Weight Conjecture (in the sense of Webb \eqref{eq:Alperin Thevenaz}) holds for $\G^F$ at the prime $\ell$ if and only if
\[\k\left(\G^F\right)-\k^0\left(\G^F\right)=\chi_{\G^F}\left(\Delta\left(\CL_{e_{\ell}(q)}^\star(\G,F)\right)\right).\]
\end{cor}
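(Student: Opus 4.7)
The proof should be a direct consequence of Theorem \ref{thm:Homotopy equivalence for e-split Levi} combined with Th\'evenaz's reformulation \eqref{eq:Alperin Thevenaz} of the Kn\"orr--Robinson version of Alperin's Weight Conjecture. The plan splits into two independent ingredients: transferring the Brown complex to the complex of $e_{\ell}(q)$-split Levi subgroups up to $\G^F$-homotopy, and invoking the $G$-homotopy invariance of the equivariant Euler characteristic. Once both are in hand, the equivalence in the statement is a formal consequence of the known case of \eqref{eq:Alperin Thevenaz}.

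First, the hypotheses $\ell\in\pi(\G,F)$ and $\ell\nmid|\z(\G)^F|$ are exactly those required by Theorem \ref{thm:Homotopy equivalence for e-split Levi}, which therefore provides a $\G^F$-homotopy equivalence
\[\Delta\left(\mathcal{S}_\ell^\star\left(\G^F\right)\right)\simeq_{\G^F}\Delta\left(\CL_{e_{\ell}(q)}^\star\left(\G,F\right)\right).\]
Second, for any finite group $G$ and any $G$-simplicial complex $\Delta$ regarded as a compact $G$-space, the equivariant $K$-theory groups $K_G^i(\Delta)$ depend only on the $G$-homotopy type of $\Delta$; this is a standard feature of equivariant $K$-theory which underlies Th\'evenaz's use of it in \cite{The93}. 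Taking alternating dimensions over $\mathbb{Q}$, the equivariant Euler characteristic $\chi_G$ is itself a $G$-homotopy invariant. Applying this to the equivalence furnished by Theorem \ref{thm:Homotopy equivalence for e-split Levi} yields
\[\chi_{\G^F}\left(\Delta\left(\mathcal{S}_\ell^\star\left(\G^F\right)\right)\right)=\chi_{\G^F}\left(\Delta\left(\CL_{e_{\ell}(q)}^\star\left(\G,F\right)\right)\right).\]

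Finally, applying Th\'evenaz's identity \eqref{eq:Alperin Thevenaz} to the finite group $G=\G^F$ at the prime $\ell$, the Kn\"orr--Robinson reformulation of Alperin's Weight Conjecture is equivalent to $\k(\G^F)-\k^0(\G^F)=\chi_{\G^F}(\Delta(\mathcal{S}_\ell^\star(\G^F)))$, and by the displayed equality this is in turn equivalent to the same relation with the Brown complex replaced by $\Delta(\CL_{e_{\ell}(q)}^\star(\G,F))$. This is precisely the statement of the corollary. There is no real obstacle here: all the substantive geometric work has already been absorbed into Theorem \ref{thm:Homotopy equivalence for e-split Levi}, and the corollary amounts to transporting a homotopy-invariant quantity across the $\G^F$-homotopy equivalence produced there.
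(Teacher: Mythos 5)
Your argument matches the paper's own: the paper observes immediately before Corollary~\ref{cor:Alperin Thevenaz} that the right-hand side of \eqref{eq:Alperin Thevenaz} is invariant under equivariant homotopy equivalences, and then invokes the $\G^F$-homotopy equivalence from Theorem~\ref{thm:Homotopy equivalence for e-split Levi} to replace the Brown complex by $\Delta(\CL_{e_{\ell}(q)}^\star(\G,F))$, exactly as you do. Your proposal is correct and follows the same route.
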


We point out that while Alperin's Weight Conjecture is not equivalent to the Kn\"orr--Robinson reformulation for every fixed finite group, the latter is equivalent to \eqref{eq:Alperin Webb} and \eqref{eq:Alperin Thevenaz} for every fixed finite group according to \cite[Corollary 4.5]{Kno-Rob89} (see also \cite[Corollary 9.23]{Nav18})

Th\'evenaz approach has recently been considered by M{\o}ller to obtain new proofs of Alperin's Weight Conjecture (in the Kn\"orr--Robinson reformulation) for general linear, unitary and symplectic groups with respect to the defining characteristic \cite{Mol19}, \cite{Mol21} and \cite{Mol22}. Corollary \ref{cor:Alperin Thevenaz} together with the geometric nature of the $e$-split Levi subgroups might lead to further developments in this direction for the non-defining characteristics.

To conclude this section, we point out that using the homotopy equivalence constructed in Theorem \ref{thm:Homotopy equivalence for e-split Levi} is not enough to obtain a reformulation for the blockwise version of Alperin's Weight Conjecture in terms of $e_{\ell}(q)$-split Levi subgroups. In fact, the latter does not admit (known) formulations in the form of \eqref{eq:Alperin Webb} and \eqref{eq:Alperin Thevenaz}. Nevertheless, in the following sections we will extend the arguments that led to the proof of Theorem \ref{thm:Homotopy equivalence for e-split Levi} and obtain results that are compatible with block theory and with even stronger conjectures.

\section{Adapting abelian $\ell$-subgroups to $e$-split Levi subgroups}
\label{sec:Adapting ell-groups to e-split Levi}

In the previous section we have constructed a $\G^F$-homotopy equivalence between the Brown complex $\Delta(\mathcal{S}_\ell^\star(\G^F))$ and the simplicial complex $\Delta(\CL_{e_{\ell}(q)}^\star(\G,F))$ whenever $\ell$ is a prime different from the defining characteristic of $\G$ and satisfying suitable hypotheses. This was enough to reformulate Alperin's Weight Conjecture, for the prime $\ell$, in terms of chains belonging to $\Delta(\CL_{e_{\ell}(q)}^\star(\G,F))$. However, in order to obtain a similar reformulation for Dade's Conjecture and its inductive condition, i.e. the Character Triple Conjecture, we need to refine these arguments. More precisely, we identify a distinguished subset of $\ell$-subgroups of $\G^F$ that corresponds to that of $e_{\ell}(q)$-split Levi subgroups of $(\G,F)$ (in a sense specified in Lemma \ref{lem:e-split produces e-closed}) and show that the only chains of $\Delta(\mathcal{S}_\ell(\G^F))$ that contribute to these conjectures are those whose terms lie in this newly defined subset of $\ell$-subgroups. This is done by constructing a series of alternations as introduced in Definition \ref{def:Alternations}. Our approach extends (unpublished) ideas of Broué, Fong and Srinivasan \cite{Bro-Fon-Sri1} that appeared in \cite[Section 7.2]{Ros-Generalized_HC_theory_for_Dade} and were used to settle analogous questions for large primes.

\subsection{The (weak) $e$-closure of an abelian $\ell$-subgroup}

Recall from Section \ref{sec:Notations chains} that $\ab_\ell(\G^F)$ denotes the set of abelian $\ell$-subgroups of $\G^F$. Our first aim is to define the following $e$-closure operator on this class of subgroups. This should be interpreted as an attempt to adapt these subgroups to the class of $e$-split Levi subgroups (see Lemma \ref{lem:e-split produces e-closed} below).

\begin{defin}[$e$-closure]
\label{def:e-closure}
For every positive integer $e$ we define the \textit{$e$-closure} on $\ab_\ell(\G^F)$ as the map
\begin{align*}
\gamma_{\ell,e}:\ab_\ell\left(\G^F\right)&\to\ab_\ell\left(\G^F\right)
\\
A &\mapsto \z^\circ\left(\c_\G\left(\z^\circ\left(\c_\G^\circ(A)\right)_{\Phi_e}\right)\right)^F_\ell
\end{align*}
and we say that an abelian $\ell$-subgroup $A\in\ab_\ell(\G^F)$ is \textit{$e$-closed} if it satisfies $A=\gamma_{\ell,e}(A)$. We denote by $\ab_\ell(\G^F)^{\gamma_{\ell,e}}$ the set of $e$-closed $\ell$-subgroups of $\G^F$, that is, the set of $\gamma_{\ell,e}$-fixed elements of the set $\ab_\ell(\G^F)$.
\end{defin}

If we assume that $\ell$ is good for $\G$, then the connected centraliser of an abelian $\ell$-subgroup is a Levi subgroup according to \cite[Proposition 2.1 (ii)]{Cab-Eng94}. In this case, the map $\gamma_{\ell,e}$ satisfies the following useful properties. Here, we denote by $\aut_\mathbb{F}(\G^F)$ the group of automorphisms of $\G^F$ described in \cite[Section 2.4]{Cab-Spa13}. As usual, recall that $\ell\neq p$.

\begin{lem}
\label{lem:Properties of e-closure}
Let $A$ be an abelian $\ell$-subgroup of $\G^F$ and assume that $\ell$ is good for $\G$. For every positive integer $e$ we have:
\begin{enumerate}
\item $\gamma_{\ell,e}(A)\leq \gamma_{\ell,e}(A')$ for every abelian $\ell$-subgroup $A'\in\ab_\ell(\G^F)$ with $A\leq A'$;
\item $\gamma_{\ell,e}(A^\alpha)=\gamma_{\ell,e}(A)^\alpha$ for every automorphism $\alpha\in\aut_\mathbb{F}(\G^F)$;
\item $\z^\circ(\G)^F_\ell\leq \gamma_{\ell,e}(A)$;
\item $[A,\gamma_{\ell,e}(A)]=1$.
\end{enumerate}
\end{lem}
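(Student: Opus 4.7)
The proof is a routine unwinding of the definition of $\gamma_{\ell,e}$ in terms of a chain of standard constructions. The plan is to introduce intermediate notation: for each abelian $\ell$-subgroup $A \leq \G^F$, set $\L_A := \c_\G^\circ(A)$ (a Levi subgroup of $\G$ by Lemma \ref{lem:Abelian subgroups to Levi subgroups} since $\ell$ is good), $\S_A := \z^\circ(\L_A)_{\Phi_e}$ (a $\Phi_e$-torus of $(\G,F)$), and $\M_A := \c_\G(\S_A)$ (an $e$-split Levi subgroup of $(\G,F)$), so that $\gamma_{\ell,e}(A) = \z^\circ(\M_A)^F_\ell$. Each of the four claims will then be a short deduction using the key fact that for Levi subgroups $\L_1 \leq \L_2$ of $\G$ one has $\z^\circ(\L_2) \leq \z^\circ(\L_1)$, together with the observation that taking $F$-fixed points and $\ell$-parts preserves inclusions.

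For (i), if $A \leq A'$ then $\c_\G(A') \leq \c_\G(A)$ and hence $\L_{A'} \leq \L_A$. Applying the displayed fact about connected centres of nested Levi subgroups gives $\z^\circ(\L_A) \leq \z^\circ(\L_{A'})$, so $\S_A \leq \S_{A'}$, whence $\M_{A'} = \c_\G(\S_{A'}) \leq \c_\G(\S_A) = \M_A$. A second application of the same fact yields $\z^\circ(\M_A) \leq \z^\circ(\M_{A'})$, and passing to $F$-fixed $\ell$-parts gives $\gamma_{\ell,e}(A) \leq \gamma_{\ell,e}(A')$. Claim (iii) is the same fact applied just once: since $\M_A$ is a Levi subgroup of $\G$, we have $\z^\circ(\G) \leq \z^\circ(\M_A)$, and the $F$-fixed $\ell$-part inclusion gives $\z^\circ(\G)^F_\ell \leq \gamma_{\ell,e}(A)$.

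For (ii), the point is that every constructor used to build $\gamma_{\ell,e}(A)$ from $A$ is equivariant under $\aut_\mathbb{F}(\G^F)$. Indeed, an element $\alpha \in \aut_\mathbb{F}(\G^F)$ arises from a bijective morphism of $\G$ compatible with $F$, so it satisfies $\c_\G(A^\alpha) = \c_\G(A)^\alpha$; it permutes connected components, so $\c_\G^\circ(A^\alpha) = \L_A^\alpha$; it commutes with taking connected centres; and since it commutes with $F$ up to the relevant compatibility, it preserves the $\Phi_e$-decomposition of an $F$-stable torus, so $(\z^\circ(\L_A)^\alpha)_{\Phi_e} = \S_A^\alpha$. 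Iterating gives $\gamma_{\ell,e}(A^\alpha) = \gamma_{\ell,e}(A)^\alpha$. Finally, (iv) follows by observing that $A$, being abelian, lies in $\c_\G(A)$ and hence centralises $\L_A = \c_\G^\circ(A)$, its centre, and in particular $\S_A$; so $A \leq \c_\G(\S_A) = \M_A$. Since $\gamma_{\ell,e}(A) \leq \z^\circ(\M_A) \leq \z(\M_A)$, we conclude $[A, \gamma_{\ell,e}(A)] = 1$.

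The only substantive ingredient is the inclusion $\z^\circ(\L_2) \leq \z^\circ(\L_1)$ for nested Levi subgroups $\L_1 \leq \L_2$, but this is standard: $\z^\circ(\L_2)$ is a central torus of $\L_2$ and therefore lies in every maximal torus of $\L_2$, in particular in one contained in $\L_1$; and being central in $\L_2$ it centralises $\L_1$, so it is contained in $\z(\L_1)$ and, being connected, in $\z^\circ(\L_1)$. I expect no serious obstacle; the mildest care is in part (ii), where one must verify that the maps $\aut_\mathbb{F}(\G^F)$ genuinely commute with the $\Phi_e$-torus construction, which is automatic from the compatibility with $F$.
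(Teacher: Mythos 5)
Your argument is correct and follows the same overall strategy as the paper: unwind $\gamma_{\ell,e}$ through the chain $A \mapsto \L_A := \c_\G^\circ(A) \mapsto \S_A := \z^\circ(\L_A)_{\Phi_e} \mapsto \M_A := \c_\G(\S_A) \mapsto \z^\circ(\M_A)^F_\ell$, using reversal of inclusion under passage to connected centres of nested Levi subgroups. Parts (i)--(iii) match the paper's reasoning essentially step for step (in (i) the paper cites \cite[Lemma 3.1 (ii)]{Bro-Mal92} for the monotonicity of the maximal $\Phi_e$-subtorus, which you leave implicit but is easy: $\S_A$ is a $\Phi_e$-subtorus of $\z^\circ(\L_{A'})$, hence lies in its maximal $\Phi_e$-subtorus). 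For (iv) you take a mildly different, equally valid route: you show $A \leq \M_A$ and $\gamma_{\ell,e}(A) \leq \z(\M_A)$, whereas the paper proves the stronger $\gamma_{\ell,e}(A) \leq \c_\G^\circ(A) \leq \c_\G(A)$, using $\L_A = \c_\G(\z^\circ(\L_A)) \leq \M_A$ and then taking connected centres.

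One small logical slip in your (iv): the clause ``$A$, being abelian, lies in $\c_\G(A)$ and hence centralises $\L_A$'' is a non-sequitur. The inclusion $A \leq \c_\G(A)$ tells you nothing about $[A,\L_A]$. The correct (and simpler) justification is that $\L_A = \c_\G^\circ(A) \leq \c_\G(A)$ by definition of centraliser, so every element of $\L_A$ commutes with every element of $A$, i.e. $[A,\L_A]=1$ --- abelianness of $A$ is not needed here at all. Once that is fixed the rest of (iv) goes through exactly as you wrote: $A$ centralises $\S_A \leq \z(\L_A)$, hence $A \leq \M_A$, and $\gamma_{\ell,e}(A) \leq \z^\circ(\M_A) \leq \z(\M_A)$ gives $[A,\gamma_{\ell,e}(A)]=1$.
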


\begin{proof}
Assuming $A\leq A'$ we get the inclusion $\c_\G^\circ(A')\leq \c_\G^\circ(A)$ and, since $\c_\G^\circ(A')$ is a Levi subgroup of $\c_\G^\circ(A)$, it follows that $\z(\c_\G^\circ(A))\leq \z(\c_\G^\circ(A'))$. Now \cite[Lemma 3.1 (ii)]{Bro-Mal92} yields $\z^\circ(\c_\G^\circ(A))_{\Phi_e}\leq \z^\circ(\c_\G^\circ(A'))_{\Phi_e}$ and we obtain an inclusion of ($e$-split) Levi subgroups $\c_\G(\z^\circ(\c_\G^\circ(A'))_{\Phi_e})\leq \c_\G(\z^\circ(\c_\G^\circ(A))_{\Phi_e})$. As before, taking the centre induces a reverse inclusion $\z^\circ(\c_\G(\z^\circ(\c_\G^\circ(A))_{\Phi_e}))\leq \z^\circ(\c_\G(\z^\circ(\c_\G^\circ(A'))_{\Phi_e}))$ from which we deduce $\gamma_{\ell,e}(A)\leq \gamma_{\ell,e}(A')$. This proves (i) while (ii) follows immediately from the fact that $Y_\ell$ is characteristic in $Y$ and $\S_{\Phi_e}$ is characteristic in $\S$ for every abelian finite group $Y$ and every ($F$-stable) torus $\S$. To prove (iii), observe that $\z^\circ(\G)$ is contained in the $e$-split Levi subgroup $\c_\G(\z^\circ(\c_\G^\circ(A))_{\Phi_e})=:\L$ for every abelian $\ell$-subgroup $A$ of $\G^F$. Then we surely have $\z^\circ(\G)^F_\ell\leq \z^\circ(\L)^F_\ell=\gamma_{\ell,e}(A)$. Finally, since $\c_\G^\circ(A)$ is a Levi subgroup of $\G$ we obtain $\c_\G^\circ(A)=\c_\G(\z^\circ(\c_\G^\circ(A)))\leq \c_\G(\z^\circ(\c_\G^\circ(A))_{\Phi_e})$ (see \cite[Proposition 3.4.6]{Dig-Mic20}) and taking the connected center yields $\z^\circ(\c_\G(\z^\circ(\c_\G^\circ(A))_{\Phi_e}))\leq \c_\G^\circ(A)$. Thus $\gamma_{\ell,e}(A)$ is contained in $\c_\G^\circ(A)\leq \c_\G(A)$ and (iv) follows. 
\end{proof}

Next, we provide a characterisation of $e$-closed abelian $\ell$-subgroups. This also explains why the map $\gamma_{\ell,e}$ is called \textit{$e$-closure} and clarifies the reason for its definition.

\begin{lem}
\label{lem:e-split produces e-closed}
Assume that $\ell$ is good for $\G$ and consider a positive integer $e$. If $A$ is an $e$-closed abelian $\ell$-subgroup $A$ of $\G^F$, then $A=\z^\circ(\L)^F_\ell$ for some $e$-split Levi subgroup $\L$ of $(\G,F)$. The converse holds provide that $e=e_{\ell}(q)$ and $\ell$ does not divide $|\z(\G^F):\z^\circ(\G^F)|$.
\end{lem}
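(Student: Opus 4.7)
The forward direction is immediate from the definition of $\gamma_{\ell,e}$: if $A=\gamma_{\ell,e}(A)$, setting $\L:=\c_\G(\z^\circ(\c_\G^\circ(A))_{\Phi_e})$ produces an $e$-split Levi subgroup of $(\G,F)$, being the centraliser of an $F$-stable $\Phi_e$-torus, and $A=\z^\circ(\L)^F_\ell$ is just the definition of $\gamma_{\ell,e}(A)$.

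For the converse, assume $e=e_\ell(q)$, $\ell\nmid|\z(\G)^F:\z^\circ(\G)^F|$, and $A=\z^\circ(\L)^F_\ell$ for some $e$-split Levi subgroup $\L$. The plan is to reduce the proof of $\gamma_{\ell,e}(A)=A$ to the single identity $\c_\G^\circ(A)=\L$. Once this is in hand, the $e$-splitness of $\L$ gives $\c_\G(\z^\circ(\L)_{\Phi_e})=\L$, and unwinding the definition of $\gamma_{\ell,e}$ yields
\[\gamma_{\ell,e}(A)=\z^\circ\bigl(\c_\G(\z^\circ(\c_\G^\circ(A))_{\Phi_e})\bigr)^F_\ell=\z^\circ\bigl(\c_\G(\z^\circ(\L)_{\Phi_e})\bigr)^F_\ell=\z^\circ(\L)^F_\ell=A.\]
The inclusion $\L\leq\c_\G^\circ(A)$ is transparent: $A\leq\z^\circ(\L)$ forces $\L=\c_\G(\z^\circ(\L))\leq\c_\G(A)$, and connectedness of $\L$ places it inside $\c_\G^\circ(A)$.

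The main obstacle is the reverse inclusion $\c_\G^\circ(A)\leq\L$. To establish it I would invoke Lemma \ref{lem:Abelian subgroup of e-split Levi subgroup} (applied with $E=\{e\}\subseteq E_\ell(q)$, which is legitimate under the running hypotheses), whose proof rests on \cite[Proposition 13.19]{Cab-Eng04} to give $\c_\G^\circ(\z(\L)^F_\ell)=\L$. It then suffices to verify the equality $\z(\L)^F_\ell=\z^\circ(\L)^F_\ell$, i.e.\ that the component group $\z(\L)^F/\z^\circ(\L)^F$ has trivial $\ell$-part, a standard consequence of the good prime hypothesis on $\G$ transferred to the Levi $\L$. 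A more direct alternative, which avoids passing through $\z(\L)^F_\ell$, is to set $\S:=\z^\circ(\L)_{\Phi_e}$ and establish $\c_\G^\circ(\S^F_\ell)=\c_\G(\S)=\L$ directly; the inclusion $\S^F_\ell\leq A$ would then force $\c_\G^\circ(A)\leq\c_\G^\circ(\S^F_\ell)=\L$. The key point supporting this alternative is that, since $e=e_\ell(q)$, we have $\ell\mid\Phi_e(q)$, so the $\ell$-torsion of the $\Phi_e$-torus $\S$ is sufficiently rich: every root $\alpha\in\Phi_\G\setminus\Phi_\L$ restricts non-trivially to $\S$ by the $e$-splitness of $\L$, and this non-triviality is detected already on $\S^F_\ell$, so no extra root subgroups enter $\c_\G^\circ(A)$. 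In either route the delicate point—and what I expect to be the only real work—is the passage from $\ell$-torsion inside an $F$-stable torus to a genuine statement about $\G$-centralisers, which is exactly where the two standing numerical hypotheses enter.
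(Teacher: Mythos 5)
Your proposal takes essentially the same route as the paper. The forward direction is word-for-word the paper's argument: take $\L:=\c_\G(\z^\circ(\c_\G^\circ(A))_{\Phi_e})$ and read off $A=\gamma_{\ell,e}(A)=\z^\circ(\L)^F_\ell$. For the converse the paper simply writes ``By Lemma~\ref{lem:Abelian subgroup of e-split Levi subgroup} we deduce that $\L=\c_\G(\z^\circ(\c_\G^\circ(A))_{\Phi_e})$'' and concludes $A=\gamma_{\ell,e}(A)$. Your reduction to the single identity $\c_\G^\circ(A)=\L$ is exactly the content hidden in that one-line invocation, and your Route~1 unwinds it correctly: Lemma~\ref{lem:Abelian subgroup of e-split Levi subgroup} is proved via \cite[Proposition 13.19]{Cab-Eng04}, which gives $\c_\G^\circ(\z(\L)^F_\ell)=\L$, and the paper silently replaces $\z(\L)^F_\ell$ by $A=\z^\circ(\L)^F_\ell$ when applying it. You are right to flag the passage $\z(\L)^F_\ell=\z^\circ(\L)^F_\ell$ as the point at which the standing hypotheses $\ell\nmid|\z(\G)^F:\z^\circ(\G)^F|$ and $\ell$ good are consumed; this is inherited by $E_\ell(q)$-split Levi subgroups and is part of the package around \cite[Proposition 13.19]{Cab-Eng04}, so there is no genuine gap, but your phrasing surfaces a step the paper leaves implicit. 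Your Route~2 (working directly with $\S=\z^\circ(\L)_{\Phi_e}$ and the $\ell$-torsion $\S^F_\ell$, arguing that roots outside $\Phi_\L$ remain non-trivial on $\S^F_\ell$) is not what the paper does, but it is morally the content of the cited Cabanes--Enguehard result and would constitute an alternative, self-contained proof if fully written out; as sketched, however, it is slightly less elementary than simply citing Lemma~\ref{lem:Abelian subgroup of e-split Levi subgroup}, since it reproves part of \cite[Proposition 13.19]{Cab-Eng04}.
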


\begin{proof}
Assume that $A$ is an $e$-closed abelian $\ell$-subgroup of $\G^F$. If we set $\L:=\c_\G(\z^\circ(\c_\G^\circ(A))_{\Phi_e})$, then $\L$ is an $e$-split Levi subgroup of $(\G,F)$ and satisfies $A=\gamma_{\ell,e}(A)=\z^\circ(\L)^F_\ell$. Conversely, assume that $\ell$ does not divide $|\z(\G)^F:\z^\circ(\G)^F|$ and that $e=e_{\ell}(q)$. Let $\L$ be an $e$-split Levi subgroup of $(\G,F)$ and set $A:=\z^\circ(\L)^F_\ell$. By Lemma \ref{lem:Abelian subgroup of e-split Levi subgroup} we deduce that $\L=\c_\G(\z^\circ(\c_\G^\circ(A))_{\Phi_e})$ and hence $A=\gamma_{\ell,e}(\z^\circ(\L)^F_\ell)=\gamma_{\ell,e}(A)$ is $e$-closed.
\end{proof}

An immediate consequence of the above lemma is that the connected centraliser of an $e_{\ell}(q)$-closed abelian $\ell$-subgroup is an $e_{\ell}(q)$-split Levi subgroup.

\begin{cor}
\label{cor:e-closed produces e-split}
Assume that $\ell$ is good for $\G$ and does not divide $|\z(\G^F):\z^\circ(\G^F)|$. If $A$ is an $e_{\ell}(q)$-closed abelian $\ell$-subgroup of $\G^F$, then the connected centraliser $\c_\G^\circ(A)$ is an $e_{\ell}(q)$-split Levi subgroup of $(\G,F)$.
\end{cor}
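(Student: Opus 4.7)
The plan is to identify $\c_\G^\circ(A)$ with the specific $e_{\ell}(q)$-split Levi subgroup $\L := \c_\G(\z^\circ(\c_\G^\circ(A))_{\Phi_e})$ which appears naturally in the definition of the $e$-closure $\gamma_{\ell,e}$ (with $e = e_{\ell}(q)$). Since $A$ is $e$-closed, the forward direction of Lemma \ref{lem:e-split produces e-closed} (which requires only that $\ell$ be good for $\G$, and so is available under our hypotheses) gives $A = \gamma_{\ell,e}(A) = \z^\circ(\L)^F_\ell$, and in particular $A \leq \z^\circ(\L)$.

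First I would establish the inclusion $\c_\G^\circ(A) \leq \L$. By Lemma \ref{lem:Abelian subgroups to Levi subgroups} the connected centraliser $\c_\G^\circ(A)$ is an $F$-stable Levi subgroup of $\G$, hence coincides with the centraliser of its connected centre, that is $\c_\G^\circ(A) = \c_\G(\z^\circ(\c_\G^\circ(A)))$. The obvious inclusion $\z^\circ(\c_\G^\circ(A))_{\Phi_e} \leq \z^\circ(\c_\G^\circ(A))$ yields, after taking centralisers in $\G$, the reverse inclusion
\[
\c_\G^\circ(A) = \c_\G\bigl(\z^\circ(\c_\G^\circ(A))\bigr) \leq \c_\G\bigl(\z^\circ(\c_\G^\circ(A))_{\Phi_e}\bigr) = \L.
\]

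For the reverse inclusion $\L \leq \c_\G^\circ(A)$, I use that $A = \z^\circ(\L)^F_\ell$ is contained in $\z(\L)$, so $\L$ centralises $A$ elementwise, giving $\L \leq \c_\G(A)$. Because $\L$ is a Levi subgroup of $\G$ it is connected and contains the identity, so it must lie inside the identity component of $\c_\G(A)$, namely $\L \leq \c_\G^\circ(A)$. Combining the two inclusions we obtain $\c_\G^\circ(A) = \L$, which is an $e_{\ell}(q)$-split Levi subgroup of $(\G,F)$ by construction.

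There is no serious obstacle; the statement is essentially a one-line consequence of Lemma \ref{lem:e-split produces e-closed} once one observes that $\c_\G^\circ(A)$, being a Levi, is recovered from its connected centre, so the two Levi subgroups $\c_\G^\circ(A)$ and $\L$ squeeze against each other. The only point worth double-checking is that the forward direction of Lemma \ref{lem:e-split produces e-closed} does not require the extra hypothesis on $\z(\G^*)^F$, so that the present hypotheses (namely $\ell$ good for $\G$ and $\ell \nmid |\z(\G)^F : \z^\circ(\G)^F|$) suffice to invoke both it and Lemma \ref{lem:Abelian subgroups to Levi subgroups}.
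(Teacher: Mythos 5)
Your proof is correct, and it follows the same overall strategy as the paper: invoke the forward direction of Lemma~\ref{lem:e-split produces e-closed} to write $A = \z^\circ(\L)^F_\ell$ with $\L$ an $e$-split Levi, then identify $\c_\G^\circ(A)$ with $\L$. The only difference lies in how the second step is carried out. The paper concludes in one line by citing Proposition~13.19 of Cabanes--Enguehard (via Lemma~\ref{lem:Abelian subgroup of e-split Levi subgroup}), which says $\c_\G^\circ(\z(\L)^F_\ell) = \L$ for $e$-split Levi subgroups and is where the hypothesis $\ell \nmid |\z(\G)^F : \z^\circ(\G)^F|$ enters. You instead prove the equality $\c_\G^\circ(A) = \L$ directly by a two-inclusion squeeze: $\c_\G^\circ(A) = \c_\G(\z^\circ(\c_\G^\circ(A))) \leq \c_\G(\z^\circ(\c_\G^\circ(A))_{\Phi_e}) = \L$ since $\c_\G^\circ(A)$ is a Levi (here $\ell$ good suffices), and $\L \leq \c_\G^\circ(A)$ since $A \leq \z^\circ(\L)$ and $\L$ is connected. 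Both inclusions use only elementary facts about Levi subgroups, so your argument is slightly more self-contained and in fact appears not to require the extra hypothesis on the centre quotient at all — a small but genuine improvement in transparency, even if the corollary as stated carries that hypothesis for uniformity with the surrounding results.
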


\begin{proof}
By Lemma \ref{lem:e-split produces e-closed} there exists an $e_{\ell}(q)$-split Levi subgroup $\L$ of $(\G,F)$ such that $A=\z^\circ(\L)^F_\ell$. But then $\c_\G^\circ(A)=\c_\G^\circ(\z^\circ(\L)^F_\ell)=\L$ is an $e_{\ell}(q)$-split Levi subgroup of $(\G,F)$ thanks to \cite[Proposition 13.19]{Cab-Eng04}.
\end{proof}

\begin{rmk}
Observe that the converse of Corollary \ref{cor:e-closed produces e-split} does not hold in general. For instance suppose that the prime $\ell$ is good for $\G$, does not divide $|\z(\G)^F:\z^\circ(\G)^F|$ but does divide the order of $\z^\circ(\G)^F$ (consider, for instance, $\G^F=\GL_n(q)$ and $\ell$ dividing $q-1$). Then, for every $\ell$-subgroup $Q$ strictly contained in $\z^\circ(\G)^F_\ell$ we deduce that $\c_\G^\circ(Q)=\G$ is an $e_{\ell}(q)$-split Levi subgroup of $(\G,F)$ while $Q$ cannot be $e_{\ell}(q)$-closed since $\z^\circ(\G)^F_\ell$ is not contained in $Q$ (see Lemma \ref{lem:Properties of e-closure} (iii)).
\end{rmk}

Given any abelian $\ell$-subgroup $A$ of $\G^F$, we would like to construct a corresponding $e$-closed subgroup of $\G^F$. To do so we first introduce the following notion of \textit{weak} $e$-closure. Recall that given two subgroups $H$ and $K$ of a finite group $G$, the product $HK$ is a subgroup of $G$ if and only if $HK=KH$. The latter condition is definitely satisfied if $[H,K]=1$. Furthermore, in this case, it follows that $HK$ is abelian if and only if so are $H$ and $K$.

\begin{defin}[Weak $e$-closure]
\label{def:Weak e-closure}
Assume that $\ell$ is good for $\G$ and consider a positive integer $e$. We define the \textit{weak $e$-closure} on $\ab_\ell(\G^F)$ as the map
\begin{align*}
\omega_{\ell,e}:\ab_\ell\left(\G^F\right)&\to\ab_\ell\left(\G^F\right)
\\
A &\mapsto A\gamma_{\ell,e}(A)
\end{align*}
which is well-defined according to Lemma \ref{lem:Properties of e-closure} (iii) and the discussion above. We say that an abelian $\ell$-subgroup $A\in\ab_\ell(\G^F)$ is \textit{weakly $e$-closed} if it satisfies $A=\omega_{\ell,e}(A)$ and we denote by $\ab_\ell(\G^F)^{\omega_{\ell,e}}$ the set of weakly $e$-closed $\ell$-subgroups of $\G^F$, that is, the set of $\omega_{\ell,e}$-fixed elements of $\ab_\ell(\G^F)$.
\end{defin}

In the following lemma we collect some basic properties of the weak $e$-closure $\omega_{\ell,e}$ and of the set of weakly $e$-closed $\ell$-subgroups. In particular, for any given abelian $\ell$-subgroup, we show how to produce an $e$-closed abelian $\ell$-subgroup by repeatedly applying the maps $\gamma_{\ell,e}$ and $\omega_{\ell,e}$.

\begin{lem}
\label{lem:Properties of weak e-closure}
Let $A$ be an abelian $\ell$-subgroup of $\G^F$ and assume that $\ell$ is good for $\G$. For every positive integer $e$ we have:
\begin{enumerate}
\item $\omega_{\ell,e}(A)\leq \omega_{\ell,e}(A')$ for every abelian $\ell$-subgroup $A'\in\ab_\ell(\G^F)$ with $A\leq A'$;
\item $\omega_{\ell,e}(A^\alpha)=\omega_{\ell,e}(A)^\alpha$ for every automorphism $\alpha\in\aut_\mathbb{F}(\G^F)$;
\item $A\leq \omega_{\ell,e}(A)$. In particular, there exists a uniquely defined non-negative integer $t_A$ minimal with the property that $\omega_{\ell,e}^{t_A}(A)$ is weakly $e$-closed;
\item if $A$ is weakly $e$-closed, then so is $\gamma_{\ell,e}(A)$. In particular, the $e$-closure restricts to a map
\[\gamma_{\ell,e}:\ab_\ell(\G^F)^{\omega_{\ell,e}}\to\ab_\ell(\G^F)^{\omega_{\ell,e}}\]
and there exists a uniquely defined non-negative integer $r_A$ minimal with the property that $\gamma_{\ell,e}^{r_A}(A)$ is $e$-closed; 
\item if $A$ is $e$-closed, then $A$ is weakly $e$-closed.
\end{enumerate}
\end{lem}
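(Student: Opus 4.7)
The plan is to bootstrap everything off Lemma \ref{lem:Properties of e-closure}. Parts (i) and (ii) are mechanical: from $A\le A'$ we get $\gamma_{\ell,e}(A)\le\gamma_{\ell,e}(A')$ by Lemma \ref{lem:Properties of e-closure}(i), so $\omega_{\ell,e}(A)=A\gamma_{\ell,e}(A)\le A'\gamma_{\ell,e}(A')=\omega_{\ell,e}(A')$, and $\aut_\mathbb{F}(\G^F)$-equivariance transfers verbatim from Lemma \ref{lem:Properties of e-closure}(ii). Part (v) is immediate: if $A=\gamma_{\ell,e}(A)$ then $\omega_{\ell,e}(A)=A\cdot A=A$. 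For (iii), $A\le\omega_{\ell,e}(A)$ is read off the definition, and iterating while invoking (i) produces an ascending chain $A\le\omega_{\ell,e}(A)\le\omega_{\ell,e}^2(A)\le\dots$ in the finite poset $\ab_\ell(\G^F)$, so it must stabilise, giving a uniquely defined minimal $t_A$.

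The real point is (iv): that the $e$-closure sends weakly $e$-closed subgroups to weakly $e$-closed subgroups. Given a weakly $e$-closed $A$, set $\L:=\c_\G(\z^\circ(\c_\G^\circ(A))_{\Phi_e})$, which by construction is an $e$-split Levi subgroup of $(\G,F)$, and put $B:=\gamma_{\ell,e}(A)=\z^\circ(\L)^F_\ell$. I would first record the general fact that any $e$-split Levi $\L=\c_\G(\S)$ satisfies $\L=\c_\G(\z^\circ(\L)_{\Phi_e})$: the connected $\Phi_e$-torus $\S$ sits inside $\z^\circ(\L)_{\Phi_e}$, so $\c_\G(\z^\circ(\L)_{\Phi_e})\le\c_\G(\S)=\L$, and the reverse inclusion is automatic. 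Next observe that $B\le\z^\circ(\L)\le\z(\L)$, so the connected group $\L$ centralises $B$ and is therefore contained in the Levi subgroup $\M:=\c_\G^\circ(B)$ (a Levi by Lemma \ref{lem:Abelian subgroups to Levi subgroups}, since $\ell$ is good for $\G$). Standard properties of nested Levi subgroups give $\z(\M)\le\z(\L)$, whence $\z^\circ(\M)_{\Phi_e}\le\z^\circ(\L)_{\Phi_e}$, and applying $\c_\G(-)$ reverses the inclusion to yield $\L=\c_\G(\z^\circ(\L)_{\Phi_e})\le\c_\G(\z^\circ(\M)_{\Phi_e})$. One last passage to the connected centre followed by restriction to $F$-fixed $\ell$-parts produces
\[\gamma_{\ell,e}(B)=\z^\circ\bigl(\c_\G(\z^\circ(\M)_{\Phi_e})\bigr)^F_\ell\le\z^\circ(\L)^F_\ell=B,\]
so $\omega_{\ell,e}(B)=B\gamma_{\ell,e}(B)=B$ and $B$ is weakly $e$-closed, which also proves that $\gamma_{\ell,e}$ restricts to $\ab_\ell(\G^F)^{\omega_{\ell,e}}$.

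With this in hand, the existence of $r_A$ follows by a finiteness argument: induction on $k$ shows that each $\gamma_{\ell,e}^k(A)$ is weakly $e$-closed (by the displayed step above), and since every weakly $e$-closed subgroup $C$ satisfies $\gamma_{\ell,e}(C)\le C$ by definition, the chain $A\ge\gamma_{\ell,e}(A)\ge\gamma_{\ell,e}^2(A)\ge\dots$ is descending in the finite poset $\ab_\ell(\G^F)$ and must stabilise at an $e$-closed subgroup, yielding the minimal $r_A$. The genuinely delicate step is the chase through connected centres and $\Phi_e$-parts in (iv) — once the identity $\L=\c_\G(\z^\circ(\L)_{\Phi_e})$ is in place and the centre/centraliser duality is set up, the remaining parts are routine bookkeeping with monotonicity and finiteness.
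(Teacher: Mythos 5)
Parts (i), (ii), (iii), (v) match the paper's proof exactly and are correct. The interesting divergence is in (iv), where you prove the inclusion $\gamma_{\ell,e}(\gamma_{\ell,e}(A))\le\gamma_{\ell,e}(A)$ \emph{from scratch} by a geometric chase: pass to the $e$-split Levi $\L$, show $\L=\c_\G(\z^\circ(\L)_{\Phi_e})$, embed $\L$ into $\M=\c^\circ_\G(\gamma_{\ell,e}(A))$, compare $\Phi_e$-parts of connected centres, and read off the inclusion. This is correct, but it never uses the hypothesis that $A$ is weakly $e$-closed, so you actually prove the stronger unconditional statement that $\gamma_{\ell,e}(A)$ is \emph{always} weakly $e$-closed. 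The paper's route is shorter and is a pure consequence of monotonicity: if $A$ is weakly $e$-closed then $A=A\gamma_{\ell,e}(A)$ forces $\gamma_{\ell,e}(A)\le A$, and applying Lemma \ref{lem:Properties of e-closure}(i) to this inclusion immediately gives $\gamma_{\ell,e}(\gamma_{\ell,e}(A))\le\gamma_{\ell,e}(A)$, whence $\omega_{\ell,e}(\gamma_{\ell,e}(A))=\gamma_{\ell,e}(A)$. No Levi subgroups or $\Phi_e$-tori are invoked at all in the paper's argument for this step; the whole content is absorbed into the already-established monotonicity. Your argument buys the stronger statement at the cost of repeating the Levi-theoretic machinery that Lemma \ref{lem:Properties of e-closure}(i) already encapsulates; the paper's buys economy by noticing that the weakly $e$-closed hypothesis hands you exactly the inclusion needed to bootstrap off monotonicity. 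Both are valid.
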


\begin{proof}
If $A\leq A'$ we know from Lemma \ref{lem:Properties of e-closure} (i) that $\gamma_{\ell,e}(A)\leq \gamma_{\ell,e}(A')$ and (i) follows. Similarly, (ii) follows from \ref{lem:Properties of e-closure} (ii). Next, observe that $A\leq A\gamma_{\ell,e}(A)=\omega_{\ell,e}(A)$ for every abelian $\ell$-subgroup and we get an ascending chain of $\ell$-subgroups $\{\omega_{\ell,e}^i(A)\}_{i\geq 0}$ of the finite group $\G^F$. Since this chain must stabilise, there exists a minimal index $t_A\geq 0$ such that $\omega_{\ell,e}^{t_A}(A)=\omega_{\ell,e}^t(A)$ for every $t\geq t_A$. Then, the subgroup $\omega_{\ell,e}^{t_A}(A)$ is weakly $e$-closed by definition.

Assume now that $A$ is weakly $e$-closed. This means that $A=\omega_{\ell,e}(A)=A\gamma_{\ell,e}(A)$ and therefore that $\gamma_{\ell,e}(A)\leq A$. Then, Lemma \ref{lem:Properties of e-closure} (i) implies $\gamma_{\ell,e}(\gamma_{\ell,e}(A))\leq \gamma_{\ell,e}(A)$ and hence $\omega_{\ell,e}(\gamma_{\ell,e}(A))=\gamma_{\ell,e}(A)\gamma_{\ell,e}(\gamma_{\ell,e}(A))=\gamma_{\ell,e}(A)$. This shows that $\gamma_{\ell,e}(A)$ is weakly $e$-closed. As before, we can form a descending chain of $\ell$-subgroups $\{\gamma_{\ell,e}^i(A)\}_{i\geq 0}$ which must eventually stabilise. We can then find a minimal index $r_A\geq 0$ such that $\gamma_{\ell,e}^{r_A}(A)=\gamma_{\ell,e}^t(A)$ for every $t\geq r_A$ and then surely $\gamma_{\ell,e}^{r_A}(A)$ is $e$-closed. This proves (iv) while (v) follows immediately from the definition.
\end{proof}

\subsection{Alternations via $\gamma_{\ell,e}$ and $\omega_{\ell,e}$}
\label{sec:Alternations}

It is well-known that different sets of $\ell$-chains can be considered as equivalent when dealing with the alternating sums appearing in Dade's Conjecture. When the alternating sum is a homotopy invariant, for instance if it is realised as the Euler characteristic of a simplicial complex, these results follow from the existence of certain homotopy equivalences. This approach was first considered by Webb \cite[Section 6]{Web87} and builds on previous results of Bouc, Quillen and Th\'evenaz. A more elementary approach was introduced by Kn\"orr and Robinson in \cite{Kno-Rob89} and is based, implicitly, on the construction of certain alternations in the sense of Definition \ref{def:Alternations} (see the proof of \cite[Proposition 3.3]{Kno-Rob89}). These are then used to show that the alternating sums under consideration can be equivalently stated in terms of different simplicial complexes \cite[Corolary 3.4]{Kno-Rob89}. While this second approach might not offer a conceptual explanation, it is much more flexible and can be used to obtain more precise results (see, for instance, \cite{Rob96}, \cite{Rob00}, \cite{Rob02I}, \cite{Rob02II}, \cite{KLLS} and \cite{Ros-CTC}).

In this section, following these ideas, we construct precise alternations that will be used to obtain cancellation theorems for Dade's Conjecture and the Character Triple Conjecture in the case of finite reductive groups. By using the properties of $\gamma_{\ell,e}$ and $\omega_{\ell,e}$ discussed in the previous section, we are able to remove the contribution of all $\ell$-chains that are not abelian and $e$-closed. This is done in subsequent steps by considering the inclusions
\[\ab_\ell(\G^F)^{\gamma_{\ell,e}}\hookrightarrow\ab_\ell(\G^F)^{\omega_{\ell,e}}\hookrightarrow\ab_\ell(\G^F)\hookrightarrow\mathcal{S}_\ell(\G^F)\]
and assuming suitable hypotheses. In the next section, we then construct a bijection between the remaining $\ell$-chains and the simplicial complex of $e$-split Levi subgroups
\[\Delta\left(\ab_\ell(\G^F)^{\gamma_{\ell,e}}\right)\longleftrightarrow\Delta\left(\CL_e(\G,F)\right)\]
and we reformulate Dade's Conjecture and the Character Triple Conjecture in terms of $e$-split Levi subgroups. Then, we recover these reformulations by the conjectures proposed by the author in \cite{Ros-Generalized_HC_theory_for_Dade}. Thanks to this result we can then apply the machinery of Deligne--Lusztig theory and generalised Harish-Chandra theory to the study of Dade's Conjecture and the Character Triple Conjecture for finite reductive groups in non-defining characteristics.

\begin{lem}
\label{lem:abelian and all subgroups}
Let $G$ be a finite group, $\ell$ a prime dividing the order of $G$ and $Z$ a central $\ell$-subgroup of $G$. For every finite group of automorphisms $A$ of $G$ stabilising $Z$ there is an $A$-alternation of $\Delta(\ab_{\ell,Z}(G,Z))$ in $\Delta(\mathcal{S}_\ell(G,Z))$.
\end{lem}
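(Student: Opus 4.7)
The plan is to construct an explicit alternation in the Knörr--Robinson style, adapted to the condition ``abelian modulo $Z$.'' Given a chain $\sigma=\{Z=P_0<P_1<\cdots<P_n\}$ lying in $\Delta(\mathcal{S}_\ell(G,Z))$ but not in $\Delta(\ab_{\ell,Z}(G,Z))$, I would first isolate two canonical indices: the smallest $i=i(\sigma)\geq 1$ such that $P_i/Z$ is non-abelian, and the smallest $j^*=j^*(\sigma)$ with $Z(P_i/Z)\leq P_{j^*}/Z$. The latter exists and lies in $\{1,\dots,i\}$, because $Z(P_i/Z)$ is a nontrivial characteristic subgroup of the nontrivial $\ell$-group $P_i/Z$ while $P_0/Z=1$.

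The key device is the subgroup
\[
Q(\sigma) := \text{preimage in } P_i \text{ of } (P_{j^*-1}/Z)\cdot Z(P_i/Z).
\]
Since $P_{j^*-1}/Z$ is abelian (by minimality of $i$) and $Z(P_i/Z)$ is central in $P_i/Z$, the product is abelian, hence $Q(\sigma)/Z$ is abelian. Minimality of $j^*$ forces the strict inclusion $P_{j^*-1}<Q(\sigma)$, non-abelianness of $P_i/Z$ forces $Q(\sigma)<P_i$, and clearly $Q(\sigma)\leq P_{j^*}$. I would then define
\[
\phi(\sigma)=\begin{cases}\sigma\cup\{Q(\sigma)\},&\text{if } Q(\sigma)<P_{j^*},\\ \sigma\setminus\{P_{j^*}\},&\text{if } Q(\sigma)=P_{j^*}.\end{cases}
\]
This changes the dimension by $\pm 1$, and since $P_i$ (with non-abelian quotient) persists in both branches---note that in the deletion case $j^*<i$ because $Q(\sigma)=P_{j^*}$ has abelian quotient---the image lies in $\Delta(\mathcal{S}_\ell(G,Z))\setminus\Delta(\ab_{\ell,Z}(G,Z))$. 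The $A$-equivariance is automatic, since $A$ acts by automorphisms of $G$ fixing $Z$ and every ingredient in the construction (smallest non-abelian index, centre of a quotient, product of subgroups, preimage) commutes with such automorphisms.

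The main obstacle is verifying $\phi^2=\mathrm{id}$. The decisive observation is that the formula for $Q(\sigma)$ depends only on $P_{j^*-1}/Z$ and on the characteristic subgroup $Z(P_i/Z)$ of $P_i/Z$, neither of which is disturbed by the local modification at position $j^*$. After an insertion, the new element at position $j^*$ of $\phi(\sigma)$ is $Q(\sigma)$, its image dominates $Z(P_i/Z)$, the predecessor is still $P_{j^*-1}$, so recomputing yields $Q(\phi(\sigma))=Q(\sigma)$; this triggers the deletion branch and restores $\sigma$. After a deletion, the element at position $j^*$ of $\phi(\sigma)$ becomes $P_{j^*+1}$, whose image still contains $Z(P_i/Z)$ (because it contains $P_{j^*}/Z$), the predecessor is again $P_{j^*-1}$, and the same formula returns $Q(\phi(\sigma))=P_{j^*}$; this triggers the insertion branch and restores $\sigma$. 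The technical heart of the argument is therefore to verify, in each scenario, that the invariants $i$ and $j^*$ are preserved under the local operation, which boils down to the fact that inserting or deleting a subgroup with abelian quotient at position $j^*$ does not change the smallest non-abelian term nor the smallest term whose image dominates the characteristic subgroup $Z(P_i/Z)$.
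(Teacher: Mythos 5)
Your construction is correct and genuinely different from the paper's, though both live squarely in the Kn\"orr--Robinson tradition of pairing by inserting or deleting a subgroup determined by a characteristic ``pivot.'' The paper observes that the largest term $P_n$ of a bad chain must be the one with $P_n/Z$ non-abelian (since abelianness-mod-$Z$ is inherited by subgroups containing $Z$), takes $[P_n,P_n]$ as the pivot, locates the unique index $m$ where $[P_n,P_n]$ first sits inside $P_m$, and inserts/deletes $[P_n,P_n]P_{m-1}$ at that spot; the top term survives because $[P_n,P_n]P_{n-1}$ lies in a maximal subgroup of $P_n$. You instead anchor on the \emph{smallest} term $P_i$ with non-abelian quotient, take the centre $Z(P_i/Z)$ of that quotient as the pivot, locate the first index $j^*$ whose quotient contains it, and insert/delete the preimage of $(P_{j^*-1}/Z)\cdot Z(P_i/Z)$. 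Your bookkeeping for $\phi^2=\mathrm{id}$ is sound: both $P_i$ and $Z(P_i/Z)$ are untouched by the local surgery at position $j^*$ (in the deletion branch because $j^*<i$, which you correctly deduce from $P_{j^*}/Z$ being abelian), so the recomputed pivot and transition index land back where they came from. The paper's version is marginally leaner because it needs no quotients or preimages and the choice of $P_n$ requires no argument, whereas yours needs the observation that $Z(P_i/Z)$ is non-trivial and the small lemma that a subgroup times a central subgroup of an abelian times a central subgroup is abelian; on the other hand, your approach is more symmetric in spirit with the construction in Proposition \ref{prop:e-closed and weakly e-closed subgroups} of the paper, which also anchors on a \emph{smallest} offending term, so it arguably fits the global architecture of Section \ref{sec:Alternations} more uniformly.
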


\begin{proof}
Fix an $\ell$-chain $\sigma=\{Z=P_0<P_1<\dots<P_n\}$ belonging to the set $\Delta(\mathcal{S}_\ell(G,Z))$ but not to $\Delta(\ab_{\ell,Z}(G,Z))$ and observe that the quotient $P_n/Z$ is not abelian, that is, $[P_n,P_n]$ is not contained in $Z$. Moreover, notice that if $Q$ is a maximal subgroup of $P_n$ containing $P_{n-1}$, then $Q$ is normal in $P_n$ and the quotient $P_n/Q$ is abelian. This implies that $[P_n,P_n]\leq Q$ and hence $[P_n,P_n]P_{n-1}\leq Q<P_n$. Next, since $[P_n,P_n]$ is not contained in $Z=P_0$, we can find a unique index $1\leq m\leq n$ such that $[P_n,P_n]$ is contained in $P_m$ but not in $P_{m-1}$. In particular, we have $P_{m-1}<[P_n,P_n]P_{m-1}\leq P_m$. We can then define $\phi(\sigma)$ as the $\ell$-chain obtained from $\sigma$ by adding the term $[P_n,P_n]P_{m-1}$, if $[P_n,P_n]P_{m-1}<P_m$, or by removing the term $P_m$, if $[P_n,P_n]P_{m-1}=P_m$. Observe that the new $\ell$-chain $\phi(\sigma)$ belongs to $\Delta(\mathcal{S}_\ell(G,Z))$ but not to $\Delta(\ab_{\ell,Z}(G,Z))$. In fact, $\phi(\sigma)$ starts with $Z=P_0$ because $m\geq 1$ while the final term of $\phi(\sigma)$ is $P_n$ because $[P_n,P_n]P_{n-1}<P_n$. Therefore, we have shown that the assignment $\sigma\mapsto \phi(\sigma)$ defines a map of $\Delta(\mathcal{S}_\ell(G,Z))\setminus \Delta(\ab_{\ell,Z}(G,Z))$ to itself. Moreover, by the above construction and recalling that $[P_n,P_n]$ is a characteristic subgroup of $P_n$, we further deduce that $\phi(\phi(\sigma))=\sigma$ and that $\phi(\sigma)^\alpha=\phi(\sigma^\alpha)$ for all $\alpha\in A$, while it is clear that $|\phi(\sigma)|=|\sigma|\pm 1$. This shows that $\phi$ is an $A$-alternation of $\Delta(\ab_{\ell,Z}(G,Z))$ in $\Delta(\mathcal{S}_\ell(G,Z))$.
\end{proof}

Now let $\G$, $F$, $q$ and $\ell$ as in the previous sections. We now consider $\ell$-chains whose terms are abelian but not necessarily weakly $e$-closed.

\begin{prop}
\label{prop:weakly e-closed and abelian subgroups}
Assume that $\ell$ is good for $\G$ and consider a positive integer $e$. Then there exists an $\aut_\mathbb{F}(\G^F)$-alternation of $\Delta(\ab_\ell(\G^F,\z^\circ(\G)^F_\ell)^{\omega_{\ell,e}})$ in $\Delta(\ab_\ell(\G^F,\z^\circ(\G)^F_\ell)$.
\end{prop}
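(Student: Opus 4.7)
My plan is to construct the alternation using a ``largest bad index'' rule. Take a chain $\sigma=\{Z=P_0<P_1<\cdots<P_n\}$ in $\Delta(\ab_\ell(\G^F,Z))$ that is not entirely contained in the weakly $e$-closed subcomplex, where $Z=\z^\circ(\G)^F_\ell$. I first identify the largest index $m$ for which $P_m$ fails to be weakly $e$-closed. Note that $P_0=Z$ is in fact $e$-closed: since $Z\le \z(\G)$ we have $\c_\G^\circ(Z)=\G$, so $\gamma_{\ell,e}(Z)=\z^\circ(\G)^F_\ell=Z$. Hence $m\geq 1$ is well defined, and by maximality each of $P_{m+1},\ldots,P_n$ is weakly $e$-closed.

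I then set $Q:=\omega_{\ell,e}^{t_{P_m}}(P_m)$, the stabilisation of the iterated weak $e$-closure of $P_m$, which exists and is itself weakly $e$-closed by Lemma \ref{lem:Properties of weak e-closure} (iii)--(iv). The crucial geometric input is that $P_m<Q\leq P_{m+1}$ whenever $m<n$: strict containment on the left is immediate because $P_m$ is not weakly $e$-closed, while the inequality on the right follows by iteratively applying monotonicity (Lemma \ref{lem:Properties of weak e-closure} (i)) to the relation $P_m\leq P_{m+1}$, using $\omega_{\ell,e}(P_{m+1})=P_{m+1}$. When $m=n$ there is no right-hand constraint.

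Now $\phi(\sigma)$ is defined by trichotomy: if $m=n$, append $Q$ to $\sigma$; if $m<n$ and $Q<P_{m+1}$, insert $Q$ strictly between $P_m$ and $P_{m+1}$; if $m<n$ and $Q=P_{m+1}$, delete the term $P_{m+1}$. In all three cases $P_m$ is retained, so $\phi(\sigma)$ still lies outside $\Delta(\ab_\ell(\G^F,Z)^{\omega_{\ell,e}})$, and visibly $|\phi(\sigma)|=|\sigma|\pm 1$. The new chain is well defined inside $\ab_\ell(\G^F,Z)$ because $Q$ is abelian by construction (being a value of $\omega_{\ell,e}$) and contains $Z\le P_m\le Q$.

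The involution property is the main bookkeeping task. The key observation is that in $\phi(\sigma)$ the largest bad index is still $m$: in the first two cases the inserted term $Q$ is weakly $e$-closed by construction, and in the deletion case the terms previously sitting above $m$ were all weakly $e$-closed already. Running the algorithm on $\phi(\sigma)$ therefore reproduces the same $m$ and the same $Q$, and a routine check shows that ``append'' and ``insert'' are matched with ``delete'' in the expected way, giving $\phi^2(\sigma)=\sigma$. Finally, $\aut_\mathbb{F}(\G^F)$-equivariance is immediate from the equivariance of $\gamma_{\ell,e}$ and $\omega_{\ell,e}$ (Lemmas \ref{lem:Properties of e-closure} (ii) and \ref{lem:Properties of weak e-closure} (ii)) together with the fact that the selection rule uses only the poset structure. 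The only potential obstacle is the case $Q=P_{m+1}$, where one must verify that deleting $P_{m+1}$ does not create a bad position above $m$; this is guaranteed by the maximality of $m$ in the original chain.
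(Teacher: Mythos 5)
Your argument is correct and follows the same strategy as the paper: pick the maximal index $m$ at which the chain term fails to be weakly $e$-closed, stabilise its iterated weak $e$-closure, and then append/insert/delete that stabilisation according to whether it equals the next term — with the involution check reducing to the observation that $m$ remains the maximal bad index after the modification. The only difference is notational (your $Q$ is the paper's $P_m$); the geometric input (monotonicity of $\omega_{\ell,e}$, equivariance, and $\gamma_{\ell,e}(Z)=Z$) is used in the same places.
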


\begin{proof}
First observe that every weakly $e$-closed abelian $\ell$-subgroup $A$ contains $Z:=\z^\circ(\G)^F_\ell$ by Lemma \ref{lem:Properties of e-closure} (iii) and that $Z$ is $e$-closed by the definition of $\gamma_{\ell,e}$. Now, let $\sigma=\{Z=A_0<A_1<\dots<A_n\}$ be an $\ell$-chain belonging to $\Delta(\ab_\ell(\G^F,Z))$ but not contained in $\Delta(\ab_\ell(\G^F,Z)^{\omega_{\ell,e}})$. For every $0\leq i\leq n$, define the weakly $e$-closed abelian $\ell$-subgroup $P_i:=\omega_{\ell,e}^{t_{A_i}}(A_i)$ where $t_{A_i}$ is the non-negative integer introduced in Lemma \ref{lem:Properties of weak e-closure} (iii). By the choice of the $\ell$-chain $\sigma$, we know that there exists an index $m$ such that $A_m$ is not weakly $e$-closed, that is, such that $A_m<\omega_{\ell,e}^{t_{A_m}}(A_m)=P_m$. Observe that the above discussion shows in particular that $m\geq 1$. Chose the index $1\leq m$ to be maximal subject to this condition and observe that, whenever $m<n$, the maximality of $m$, together with Lemma \ref{lem:Properties of weak e-closure} (i), implies that $P_m=\omega_{\ell,e}^{t_{A_m}}(A_m)\leq \omega_{\ell,e}^{t_{A_m}}(A_{m+1})=A_{m+1}$ and hence that $A_m<P_m\leq A_{m+1}$. Now, if $m=n$, we define $\phi(\sigma)$ to be the $\ell$-chain obtained by adding the weakly $e$-closed $\ell$-subgroup $P_m$ at the end of the $\ell$-chain $\sigma$. On the other hand, if $m<n$, then we define $\phi(\sigma)$ to be the $\ell$-chain obtain by adding $P_m$ to $\sigma$ if $P_m<A_{m+1}$, and by removing the term $A_{m+1}$ if $P_m=A_{m+1}$. Observe that in any case the new $\ell$-chain $\phi(\sigma)$ has $Z$ as first term and still contains the term $A_m$. Moreover, because $P_m$ is weakly $e$-closed by construction, the index $m$ coincides with the maximal index satisfying the condition in the definition above with respect to the newly defined $\ell$-chain $\phi(\sigma)$. In particular, this shows that $\phi(\sigma)$ is an abelian $\ell$-chain belonging to $\Delta(\ab_\ell(\G^F,Z))$ but not to $\Delta(\ab_{\ell}(\G^F,Z)^{\omega_{\ell,e}})$, and that $\phi(\phi(\sigma))=\sigma$. In addition, Lemma \ref{lem:Properties of weak e-closure} (ii) shows that the map $\phi$ is $\aut_\mathbb{F}(\G^F)$-equivariant while it is clear that $|\phi(\sigma)|=|\sigma|\pm 1$. We can now conclude that $\phi$ is an $\aut_\mathbb{F}(\G^F)$-alternation of $\Delta(\ab_\ell(\G^F,Z)^{\omega_{\ell,e}})$ in $\Delta(\ab_\ell(\G^F,Z))$ as required.
\end{proof}

Before proving the next result we need the following lemma. From now on we assume that $e$ coincides with $e_{\ell}(q)$.

\begin{lem}
\label{lem:e-closed and centre}
Assume that $\ell\in\pi(\G,F)$ and let $e=e_{\ell}(q)$. If $A$ is an abelian $\ell$-subgroup of $\G^F$, then $\gamma_{\ell,e}(A)\leq\z(\G)^F_\ell$ if and only if $A\leq\z(\G)^F_\ell$.
\end{lem}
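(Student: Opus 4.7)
The plan is to prove the two directions of the equivalence separately. The backward direction is straightforward: if $A\leq\z(\G)^F_\ell$ then $A$ is central in $\G$, so $\c_\G^\circ(A)=\G$, giving $\z^\circ(\c_\G^\circ(A))_{\Phi_e}=\z^\circ(\G)_{\Phi_e}\leq\z(\G)$ and hence $\gamma_{\ell,e}(A)=\z^\circ(\G)^F_\ell\leq\z(\G)^F_\ell$.

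For the forward direction, set $\H:=\c_\G^\circ(A)$ and $\L:=\c_\G(\z^\circ(\H)_{\Phi_e})$, so that $\gamma_{\ell,e}(A)=\z^\circ(\L)^F_\ell$. The hypothesis $\ell\in\pi(\G,F)$ ensures $\ell\nmid|\z(\G)^F:\z^\circ(\G)^F|$, so the assumption $\gamma_{\ell,e}(A)\leq\z(\G)^F_\ell$ simplifies to $\z^\circ(\L)^F_\ell\leq\z^\circ(\G)^F_\ell$. Since $\L$ is a Levi subgroup of $\G$ it contains $\z(\G)$, so $\z^\circ(\G)\leq\z^\circ(\L)$ and the reverse inclusion of the $\ell$-parts of $F$-fixed points is automatic; we deduce $\z^\circ(\L)^F_\ell=\z^\circ(\G)^F_\ell$. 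The target is now to prove $\H=\G$, because Lemma \ref{lem:Abelian subgroups to Levi subgroups}(ii) would then force $A\leq\z^\circ(\G)^F_\ell\leq\z(\G)^F_\ell$.

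The core step, and the one I expect to require most care, is to upgrade this equality to genuine information about the underlying tori. Applying the Lang--Steinberg theorem to
\begin{equation*}
1\to\z^\circ(\G)\to\z^\circ(\L)\to\T\to 1
\end{equation*}
with $\T:=\z^\circ(\L)/\z^\circ(\G)$ produces a short exact sequence on $F$-fixed points, so the equality above forces $|\T^F|$ to be coprime to $\ell$. Since $P_{(\T,F)}(x)=\prod_n\Phi_n(x)^{\dim\T_{\Phi_n}}$ and $\ell\mid\Phi_n(q)$ exactly when $n\in E_\ell(q)$, this coprimality forces $\dim\T_{\Phi_n}=0$ for every $n\in E_\ell(q)$. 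Multiplicativity of order polynomials in short exact sequences of tori then yields $\dim\z^\circ(\L)_{\Phi_e}=\dim\z^\circ(\G)_{\Phi_e}$, and being connected of the same dimension the inclusion $\z^\circ(\G)_{\Phi_e}\leq\z^\circ(\L)_{\Phi_e}$ is an equality; in particular
\begin{equation*}
\z^\circ(\H)_{\Phi_e}\leq\z^\circ(\L)_{\Phi_e}=\z^\circ(\G)_{\Phi_e}\leq\z(\G).
\end{equation*}

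To close the argument I would reuse the application of \cite[Lemma 22.3]{Cab-Eng04} already exploited in the proof of Theorem \ref{thm:Homotopy equivalence for e-split Levi} to propagate the inclusion from $\Phi_e$ to every $\Phi_{e\ell^a}$, obtaining $\z^\circ(\H)_{\Phi_{E_\ell(q)}}\leq\z(\G)$. Since $\H$ is $E_\ell(q)$-split by Lemma \ref{lem:Abelian subgroups to Levi subgroups}(iii), this yields $\H=\c_\G(\z^\circ(\H)_{\Phi_{E_\ell(q)}})=\G$, completing the proof.
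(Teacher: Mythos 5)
Your proof is correct, and both directions are handled soundly; the backward direction matches the paper's essentially verbatim. In the forward direction, you and the paper converge on the same pivotal intermediate target, namely proving $\z^\circ(\c_\G^\circ(A))_{\Phi_e}\leq\z(\G)$, and from that point onwards the arguments are identical: apply \cite[Lemma 22.3]{Cab-Eng04} to propagate the inclusion to all $\Phi_{e\ell^a}$, conclude $\c_\G^\circ(A)=\G$ from $E_{\ell}(q)$-splitness (Lemma \ref{lem:Abelian subgroups to Levi subgroups}(iii)), and finish via Lemma \ref{lem:Abelian subgroups to Levi subgroups}(ii). What differs is how you reach that intermediate step. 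The paper dispatches it in one line via \cite[Proposition 13.19]{Cab-Eng04}: since $\L:=\c_\G(\z^\circ(\c_\G^\circ(A))_{\Phi_e})$ is $e$-split, one has $\L=\c_\G^\circ(\z(\L)^F_\ell)=\c_\G^\circ(\z(\G)^F_\ell)=\G$. You instead work directly with the underlying tori: Lang--Steinberg applied to $1\to\z^\circ(\G)\to\z^\circ(\L)\to\T\to 1$ converts the equality of $\ell$-parts of $F$-fixed points into $\ell\nmid|\T^F|$, the cyclotomic factorization of $P_{(\T,F)}$ then forces $\T_{\Phi_e}=1$, and multiplicativity of order polynomials yields $\z^\circ(\L)_{\Phi_e}=\z^\circ(\G)_{\Phi_e}$, which suffices since $\z^\circ(\c_\G^\circ(A))_{\Phi_e}\leq\z^\circ(\L)_{\Phi_e}$. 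Your route is longer but more self-contained, in effect reproving the needed special case of \cite[Proposition 13.19]{Cab-Eng04} by hand using only the order-polynomial calculus of \cite{Bro-Mal92}, whereas the paper's route is terser, leaning on the black-box characterization of $e$-split Levi subgroups as connected centralizers of central $\ell$-subgroups.
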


\begin{proof}
To start, observe that if $A\leq\z(\G)^F_\ell$ then $\c_\G^\circ(A)=\G$ and therefore we immediately obtain $\gamma_{\ell,e}(A)=\z(\G)^F_\ell$. Conversely, assume that $\gamma_{\ell,e}(A)=\z(\G)^F_\ell$ and define $\H:=\c_\G(\z^\circ(\c_\G^\circ(A))_{\Phi_e})$ so that $\z(\G)^F_\ell=\z(\H)^F_\ell$. Now \cite[Proposition 13.19]{Cab-Eng04} implies that the $e$-split Levi subgroup $\H$ of $(\G,F)$ coincides with the centraliser $\c^\circ_\G(\z(\H)^F_\ell)$ so that $\G=\c_\G^\circ(\z(\G)^F_\ell)=\c_\G^\circ(\z(\H)^F_\ell)=\H$ and hence $\z^\circ(\c_\G^\circ(A))_{\Phi_e}\leq \z(\G)$. Then, arguing as in the proof of \cite{Cab-Eng04}, we can apply \cite[Lemma 22.3]{Cab-Eng04} to show that $\z^\circ(\c_\G^\circ(A))_{\Phi_{e\ell^a}}\leq \z(\G)$ for every non-negative integer $a$. This shows that $\G=\c_\G^\circ(A)$ because the centraliser $\c_\G^\circ(A)$ is an $E_{\ell}(q)$-split Levi subgroup of $(\G,F)$ according to Lemma \ref{lem:Abelian subgroups to Levi subgroups} (iii). We finally conclude that $A\leq \z(\G)^F_\ell$ as required above.
\end{proof}

As an immediate consequence we obtain the following corollary for weakly $e$-closed abelian $\ell$-subgroups.

\begin{cor}
\label{cor:e-closed and centre}
Assume that $\ell\in\pi(\G,F)$ and let $e=e_{\ell}(q)$. If $A$ is a weakly $e$-closed abelian $\ell$-subgroup of $\G^F$, then $\gamma_{\ell,e}(A)=\z(\G)^F_\ell$ if and only if $A=\z(\G)^F_\ell$.
\end{cor}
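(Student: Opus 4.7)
My plan is to derive the corollary as a direct consequence of Lemma \ref{lem:e-closed and centre}, combined with two simple structural observations about weakly $e$-closed subgroups.

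First, I would record the two preliminary observations that do all the work. The first is tautological: since $A$ is weakly $e$-closed, $A = \omega_{\ell,e}(A) = A\gamma_{\ell,e}(A)$, so in particular $\gamma_{\ell,e}(A) \leq A$. The second uses the hypothesis $\ell \in \pi(\G,F)$, which by Definition \ref{def:Prime condition} says that $\ell$ does not divide $|\z(\G)^F : \z^\circ(\G)^F|$, so that $\z(\G)^F_\ell = \z^\circ(\G)^F_\ell$. Combined with Lemma \ref{lem:Properties of e-closure} (iii), this gives $\z(\G)^F_\ell \leq \gamma_{\ell,e}(B)$ for every abelian $\ell$-subgroup $B$ of $\G^F$.

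For the forward direction, assume $\gamma_{\ell,e}(A) = \z(\G)^F_\ell$. On the one hand, the first observation above yields $\z(\G)^F_\ell = \gamma_{\ell,e}(A) \leq A$. On the other hand, $\gamma_{\ell,e}(A) = \z(\G)^F_\ell$ means in particular $\gamma_{\ell,e}(A) \leq \z(\G)^F_\ell$, so Lemma \ref{lem:e-closed and centre} gives $A \leq \z(\G)^F_\ell$. The two inclusions combine to $A = \z(\G)^F_\ell$.

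For the converse, assume $A = \z(\G)^F_\ell$. Then $A \leq \z(\G)^F_\ell$, so Lemma \ref{lem:e-closed and centre} yields $\gamma_{\ell,e}(A) \leq \z(\G)^F_\ell$. The reverse inclusion $\z(\G)^F_\ell \leq \gamma_{\ell,e}(A)$ follows from the second observation above, giving equality. No real obstacle arises here; the only subtlety is noticing that the hypothesis $\ell \in \pi(\G,F)$ is what identifies $\z(\G)^F_\ell$ with $\z^\circ(\G)^F_\ell$, allowing Lemma \ref{lem:Properties of e-closure} (iii) to supply the reverse inclusion needed to upgrade Lemma \ref{lem:e-closed and centre} from an inequality-style equivalence to the equality-style equivalence stated here.
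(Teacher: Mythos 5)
Your proof is correct and uses exactly the same two ingredients as the paper, namely Lemma \ref{lem:e-closed and centre} and Lemma \ref{lem:Properties of e-closure}~(iii) (with $\ell\in\pi(\G,F)$ supplying $\z(\G)^F_\ell=\z^\circ(\G)^F_\ell$), together with the observation that weak $e$-closure gives $\gamma_{\ell,e}(A)\leq A$. The paper phrases this slightly more compactly by converting both equalities to one-sided inequalities and then quoting Lemma~\ref{lem:e-closed and centre} once, but the content is the same.
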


\begin{proof}
This follows immediately from Lemma \ref{lem:e-closed and centre} and Lemma \ref{lem:Properties of e-closure} (iii). In fact, since $\z(\G)^F_\ell\leq \gamma_{\ell,e}(A)$, the condition $\gamma_{\ell,e}(A)\leq \z(\G)^F_\ell$ is equivalent to the equality $\gamma_{\ell,e}(A)=\z(\G)^F_\ell$. On the other hand, when $A$ is weakly $e$-closed, we know that $\z(\G)^F_\ell\leq A$ and so the equality $A=\z(\G)^F_\ell$ is equivalent to the condition $A\leq \z(\G)^F_\ell$.
\end{proof}

Finally, we show how to get rid of those abelian $\ell$-subgroups that are weakly $e$-closed but not $e$-closed.

\begin{prop}
\label{prop:e-closed and weakly e-closed subgroups}
Assume that $\ell\in\pi(\G,F)$ and let $e=e_{\ell}(q)$. Then there exists an $\aut_\mathbb{F}(\G^F)$-alternation of $\Delta(\ab_\ell(\G^F,\z(\G)^F_\ell)^{\gamma_{\ell,e}})$ in $\Delta(\ab_\ell(\G^F,\z(\G)^F_\ell)^{\omega_{\ell,e}})$.
\end{prop}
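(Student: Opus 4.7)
The plan is to mimic the alternation constructed in Proposition \ref{prop:weakly e-closed and abelian subgroups}, but with $\gamma_{\ell,e}$ playing the role that $\omega_{\ell,e}$ played there. Write $Z:=\z(\G)^F_\ell$, which under the hypothesis $\ell\in\pi(\G,F)$ coincides with $\z^\circ(\G)^F_\ell$ and is easily checked to be $e$-closed directly from the definition of $\gamma_{\ell,e}$. Given a chain $\sigma=\{Z=A_0<A_1<\dots<A_n\}$ of weakly $e$-closed $\ell$-subgroups that contains at least one non $e$-closed term, I would pick the minimal index $m\geq 1$ such that $A_m$ is not $e$-closed and set $P:=\gamma_{\ell,e}^{r_{A_m}}(A_m)$. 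By the definition of $r_{A_m}$ (Lemma \ref{lem:Properties of weak e-closure} (iv)), $P$ is $e$-closed, and since $A_m$ itself is not $e$-closed, $P$ lies strictly below $A_m$.

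The key monotonicity step is to show $A_{m-1}\leq P<A_m$. The right inequality is immediate; for the left I would iterate Lemma \ref{lem:Properties of e-closure} (i): since $A_{m-1}$ is $e$-closed by minimality of $m$, it is fixed by every power of $\gamma_{\ell,e}$, so $A_{m-1}=\gamma_{\ell,e}^j(A_{m-1})\leq\gamma_{\ell,e}^j(A_m)$ for all $j$, and taking $j=r_{A_m}$ yields the claim. I then define $\phi(\sigma)$ to be the chain obtained by inserting $P$ between $A_{m-1}$ and $A_m$ when $A_{m-1}<P$, and by deleting the term $A_{m-1}$ when $A_{m-1}=P$.

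The main obstacle is the edge case $m=1$ together with $P=A_0=Z$, since deleting $A_0=Z$ would violate the convention that chains of $\Delta(\ab_\ell(\G^F,Z)^{\omega_{\ell,e}})$ start with $Z$. To rule this out I would apply Corollary \ref{cor:e-closed and centre} inductively along the descending sequence $A_1\geq\gamma_{\ell,e}(A_1)\geq\cdots\geq\gamma_{\ell,e}^{r_{A_1}}(A_1)=P$ of weakly $e$-closed subgroups (weakly $e$-closed by Lemma \ref{lem:Properties of weak e-closure} (iv)): if $P=Z$, the corollary forces each intermediate term to equal $Z$, hence in particular $A_1=Z$, contradicting $A_1>A_0=Z$. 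Thus when $m=1$ the insertion branch always applies.

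To finish, I would verify the remaining properties of an alternation. The chain $\phi(\sigma)$ still lies in the weakly $e$-closed complex because $P$ is weakly $e$-closed by Lemma \ref{lem:Properties of weak e-closure} (v), and it still contains the non $e$-closed term $A_m$, so it fails to lie in $\Delta(\ab_\ell(\G^F,Z)^{\gamma_{\ell,e}})$. The involution property $\phi^2=\mathrm{id}$ is verified by tracking the minimal non $e$-closed index: inserting $P$ pushes this index from $m$ to $m+1$ in $\phi(\sigma)$ and triggers the deletion branch (since the associated $e$-closure is again $P$, the term just below $A_m$), while deleting $A_{m-1}$ lowers it from $m$ to $m-1$ and triggers the insertion branch (since $P=A_{m-1}>A_{m-2}$), in both cases recovering $\sigma$. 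Finally, $|\phi(\sigma)|=|\sigma|\pm 1$ is visible from the construction, and $\aut_\mathbb{F}(\G^F)$-equivariance follows from parts (ii) of Lemmas \ref{lem:Properties of e-closure} and \ref{lem:Properties of weak e-closure}.
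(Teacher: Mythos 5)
Your proof is correct and follows the same strategy as the paper's: pick the minimal index $m\geq 1$ at which $A_m$ fails to be $e$-closed, set $P:=\gamma_{\ell,e}^{r_{A_m}}(A_m)$, use Lemma \ref{lem:Properties of e-closure}(i) and $e$-closedness of $A_{m-1}$ to get $A_{m-1}\leq P<A_m$, then insert or delete accordingly, ruling out deletion of $Z$ via Corollary \ref{cor:e-closed and centre}. Your explicit iteration of Corollary \ref{cor:e-closed and centre} along the descending sequence $A_1\geq\gamma_{\ell,e}(A_1)\geq\cdots\geq P$ is actually slightly more careful than the paper's terse appeal to the corollary, and your tracking of the minimal non $e$-closed index for the involution check matches the paper's observation that $\phi$ only adds or removes $e$-closed subgroups strictly below $A_m$.
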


\begin{proof}
Define $Z:=\z(\G)^F_\ell=\z^\circ(\G)^F_\ell$ and let $\sigma=\{Z=A_0<A_1<\dots<A_n\}$ be an $\ell$-chain belonging to the set $\Delta(\ab_\ell(\G^F,Z)^{\omega_{\ell,e}})$. Since each term $A_i$ of the $\ell$-chain $\sigma$ is weakly $e$-closed, applying Lemma \ref{lem:Properties of weak e-closure} (iv) we can find a non-negative integer $r_{A_i}$ such that the abelian $\ell$-subgroup $P_i:=\gamma_{\ell,e}^{r_{A_i}}(A_i)$ is $e$-closed. Notice also that $P_i$ is contained in $A_i$ since the map $\gamma_{\ell,e}$ restricts to the set of weakly $e$-closed abelian $\ell$-subgroups  and satisfies $\gamma_{\ell,e}(Q)\leq Q\gamma_{\ell,e}(Q)=\omega_{\ell,e}(Q)=Q$ for every $Q\in\ab_\ell(\G^F,Z)^{\omega_{\ell,e}}$. Now, assume that the $\ell$-chain $\sigma$ does not belong to $\Delta(\ab_\ell(\G^F,Z)^{\gamma_{\ell,e}})$. In other words, let us assume that there exists some index $m$ such that $\gamma_{\ell,e}(A_m)<A_m$ or, equivalently, such that $P_m<A_m$. We further choose the index $m$ to be minimal with respect to this property. Firstly, by applying Lemma \ref{lem:e-split produces e-closed} with $\L=\G$, notice that $m>0$ since $Z$ is an $e$-closed $\ell$-subgroup. Secondly, it follows by the minimality of the index $m$ that $A_{m-1}$ is $e$-closed and hence $A_{m-1}=\gamma_{\ell,e}^{r_{A_m}}(A_{m-1})\leq \gamma_{\ell,e}^{r_{A_m}}(A_m)=P_m$ according to Lemma \ref{lem:Properties of weak e-closure} (i). Therefore, we conclude that $A_{m-1}\leq P_m<A_m$. Now, we define a new $\ell$-chain $\phi(\sigma)$ by adding the $\ell$-subgroup $P_m$ to the $\ell$-chain $\sigma$ as an intermediate term between $A_{m-1}$ and $A_m$ if $A_{m-1}<P_m$. On the other hand, if $A_{m-1}=P_m$, then we define the new $\ell$-chain $\phi(\sigma)$ by removing the term $A_{m-1}$ from $\sigma$. Observe that this construction produces an $\ell$-chain $\phi(\sigma)$ that belongs to the set $\Delta(\ab_\ell(\G^F,Z)^{\omega_{\ell,e}})$ but not to $\Delta(\ab_\ell(\G^F,Z)^{\gamma_{\ell,e}})$. In fact, each term of the new $\ell$-chain $\phi(\sigma)$ is weakly $e$-closed by Lemma \ref{lem:Properties of weak e-closure} (iv), the starting term of $\phi(\sigma)$ is $Z$ because the occurrence $Z=P_m$ implies that $A_m=Z$ according to Corollary \ref{cor:e-closed and centre}, and the term $A_m$ is not $e$-closed and belongs to $\phi(\sigma)$. Moreover, observe that $|\phi(\sigma)|=|\sigma|\pm 1$ and that $\phi$ is $\aut_\mathbb{F}(\G^F)$-equivariant. To conclude, we need to show that $\phi^2$ coincides with the identity map on $\Delta(\ab_\ell(\G^F,Z)^{\omega_{\ell,e}})\setminus\Delta(\ab_\ell(\G^F,Z)^{\gamma_{\ell,e}})$. To see this, observe that the construction of $\phi(\sigma)$ only modifies the chain $\sigma$ by adding or removing $e$-closed abelian $\ell$-subgroups strictly contained in $A_m$. Therefore, the minimal term of the $\ell$-chain $\phi(\sigma)$ that is not $e$-closed is once again $A_m$ and hence $\phi(\phi(\sigma))=\sigma$. This shows that $\phi$ defines an $\aut_\mathbb{F}(\G^F)$-alternation of $\Delta(\ab_\ell(\G^F,Z)^{\gamma_{\ell,e}})$ in $\Delta(\ab_\ell(\G^F,Z)^{\omega_{\ell,e}})$. 
\end{proof}

\section{Dade's Conjecture and the Character Triple Conjecture}
\label{sec:Dade and CTC}

We now apply the results obtained in Section \ref{sec:Adapting ell-groups to e-split Levi}, and in particular the alternations described in Section \ref{sec:Alternations}, to prove cancellation theorems for Dade's Conjecture and for the Character Triple Conjecture in the case of finite reductive groups. These cancellation theorems allow us to show that, under suitable hypotheses, the conjectures introduced by the author in \cite{Ros-Generalized_HC_theory_for_Dade} imply Dade's Conjecture and its inductive condition in the form of the Character Triple Conjecture. The results of this section improve those of \cite[Section 7.3]{Ros-Generalized_HC_theory_for_Dade} where the simpler case of large primes was considered.

\subsection{Statement of the conjectures} 

Let $G$ be a finite group and consider an $\ell$-block $B$ of $G$, a non-negative integer $d$, and an irreducible character $\lambda$ of a fixed subgroup $Z$ of $\z(G)$. For every chain $\sigma\in\Delta(\mathcal{S}_\ell(G,Z_\ell))$ and every $\ell$-block $b$ of the stabiliser $G_\sigma$, observe that the Brauer induced block $b^G$ is defined according to \cite[Lemma 3.2]{Kno-Rob89}. Moreover, notice that $Z$ stabilises the chain $\sigma$ and hence is contained in $G_\sigma$. We define $\k^d(B_\sigma,\lambda)$ to be the number of irreducible characters $\vartheta$ of $G_\sigma$ lying above $\lambda$, with $\ell$-defect $d$, and such that $\bl(\vartheta)^G=B$. Here $\bl(\vartheta)$ denotes the unique $\ell$-block of $G_\sigma$ that contains $\vartheta$.

\begin{dade}[{{\cite[Conjecture 15.5]{Dad94}}}]
\label{conj:Dade}
Let $G$ be a finite group and consider a subgroup $Z$ of $\z(G)$. Then
\[\sum\limits_{\sigma\in\Delta(\mathcal{S}_\ell(G,Z_\ell))/G}(-1)^{|\sigma|}\k^d(B_\sigma,\lambda)=0\]
for every $\ell$-block $B$ of $G$ whose defect groups strictly contain $Z_\ell$, every non-negative integer $d$, and every irreducible character $\lambda$ of $Z$.
\end{dade}

The ordinary version of Dade's Conjecture can be recovered by setting $Z=1$. Moreover, by using a standard argument on the contractibility of the simplicial complex $\Delta(\mathcal{S}_\ell(G,Z_\ell))$ due to Quillen, the above alternating sum vanishes for trivial reasons unless $Z_\ell$ coincides with $\O_\ell(G)$.

In \cite{Spa17}, Dade's Projective Conjecture \ref{conj:Dade} was reduced to a statement about quasi-simple groups known as the \textit{inductive condition for Dade's Conjecture} (see \cite[Definition 6.7]{Spa17}). This condition is stated in terms of the Character Triple Conjecture that we now describe. Let $G$ be a finite group, $B$ an $\ell$-block of $G$, and $d$ a non-negative integer as above. Assume now that $Z$ is an $\ell$-subgroup central in $G$. Define the set $\C^d(B,Z)$ of pairs $(\sigma,\vartheta)$ where $\sigma$ is a chain belonging to the simplicial complex $\Delta(\mathcal{S}_\ell(G,Z))$ and $\vartheta$ is an irreducible character of the stabiliser $G_\sigma$ with $\ell$-defect $d$ and such that $\bl(\vartheta)^G=B$. We write $\C^d(B)$ to denote $\C^d(B,1)$. Observe that, while in previous papers (see, for instance, \cite{Spa17} and \cite{Ros22}) this notation was reserved for the set $\C^d(B,\O_\ell(G))$, no confusion can arise as we can always assume that $\O_\ell(G)$ is central (see \cite[Lemma 2.3]{Ros22}) and the final results of this paper assume that the order of $\z(G)$ is prime to $\ell$. Next, partition $\C^d(B,Z)$ into its subsets $\C^d(B,Z)_\pm$ consisting of those pairs $(\sigma,\vartheta)$ such that $(-1)^{|\sigma|}=\pm 1$. Furthermore, notice that $G$ acts by conjugation on $\C^d(B,Z)_\pm$ and denote by $\C^d(B,Z)_\pm/G$ the corresponding set of $G$-orbits $\overline{(\sigma,\vartheta)}$. In what follows we freely use the notion of $G$-block isomorphism of character triples, denoted by $\iso{G}$, as introduced in \cite[Definition 3.6]{Spa17}. We refer the reader to that paper for further details.

\begin{ctc}[{{\cite[Conjecture 6.3]{Spa17}}}]
\label{conj:CTC}
Let $G$ be a finite group and consider a central $\ell$-subgroup $Z$ of $G$ and an $\ell$-block $B$ of $G$ with defect groups strictly containing $Z$. Suppose that $G\unlhd X$. Then, for every non-negative integer $d$, there exists an $\n_X(Z)_B$-equivariant bijection
\[\Omega:\C^d(B,Z)_+/G\to\C^d(B,Z)_-/G\]
such that
\[\left(X_{\sigma,\vartheta},G_\sigma,\vartheta\right)\iso{G}\left(X_{\rho,\chi},G_\rho,\chi\right)\]
for every $(\sigma,\vartheta)\in\C^d(B,Z)_+$ and $(\rho,\chi)\in\Omega(\overline{(\sigma,\vartheta)})$.
\end{ctc}

Next, we describe \cite[Conjecture C and Conjecture D]{Ros-Generalized_HC_theory_for_Dade}. As in the previous sections, let $(\G,F)$ be a finite reductive group defined over the finite field $\mathbb{F}_q$ and let $\ell$ be a prime not dividing $q$. From now on, and for the rest of this section, assume that $e=e_{\ell}(q)$ as defined in Section \ref{sec:Order polynomial}. Next, consider an $\ell$-block $B$ of $\G^F$, a non-negative integer $d$, and an irreducible character $\lambda$ of a subgroup $Z$ of $\z(\G)^F$. Let $\k^d(B,\lambda)$ be the number of irreducible characters contained in the $\ell$-block $B$, lying above $\lambda$, and with $\ell$-defect $d$. Similarly, let $\k^d_{\rm c}(B,\lambda)$ denote the number of such characters that are additionally $e$-cuspidal (see \cite[Definition 3.5.19]{Gec-Mal20}). For every $\sigma\in\Delta(\CL_e^\star(\G,F))$, define $\k^d(B_\sigma,\lambda)$ as the number of irreducible characters $\vartheta$ of the stabiliser $\G_\sigma^F$ lying above $\lambda$, with $\ell$-defect $d$, and such that $\bl(\vartheta)^G$ is defined and coincides with $B$. Although the latter condition on block induction differs from the one given in \cite[Section 5.1]{Ros-Generalized_HC_theory_for_Dade}, the two are equivalent under the hypotheses considered below thanks to \cite[Lemma 5.5]{Ros-Generalized_HC_theory_for_Dade}. Furthermore, here we are considering a slightly more general statement by allowing the presence of the central character $\lambda$ as considered in the projective version of Dade's Conjecture.

\begin{conj}[{{\cite[Conjecture C]{Ros-Generalized_HC_theory_for_Dade}}}]
\label{conj:Dade reductive}
Consider an $\ell$-block $B$ of $\G^F$, a non-negative integer $d$, and an irreducible character $\lambda$ of a subgroup $Z$ of $\z(\G)^F$. Then
\[k^d(B,\lambda)=k^d_{\rm c}(B,\lambda)+\sum\limits_{\rho\in\Delta(\CL_e^\star(\G,F))/\G^F}(-1)^{|\rho|+1}k^d(B_\rho,\lambda).\]
\end{conj}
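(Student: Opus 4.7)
The plan is to combine generalised $e$-Harish-Chandra theory with a Möbius-type cancellation on the poset $\CL_e^\star(\G,F)$. Under the hypotheses on $\ell$, the work of Cabanes--Enguehard and Brou\'e--Malle--Michel partitions $\irr(\G^F)$ into $e$-Harish-Chandra series $\mathcal{E}_e(\G^F,(\L,\zeta))$ indexed by $\G^F$-conjugacy classes of $e$-cuspidal pairs, with the $e$-cuspidal characters of $\G^F$ corresponding to pairs with $\L=\G$. Each series is parametrised by $\irr(W_{\G^F}(\L,\zeta))$, where $W_{\G^F}(\L,\zeta)=\n_{\G^F}(\L,\zeta)/\L^F$, and by Bonnaf\'e--Dat--Rouquier's theorem this partition is compatible with the $\ell$-block decomposition.

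I would then decompose the statement series-by-series. Fixing $(\L,\zeta)$, let $k^d(B,\lambda;\L,\zeta)$ count the characters in $\mathcal{E}_e(\G^F,(\L,\zeta))$ that lie above $\lambda$, have defect $d$, and belong to $B$. Analogously, for each chain $\rho=\{\L_0<\dots<\L_n\}$ in $\Delta(\CL_e^\star(\G,F))$, define $k^d(B_\rho,\lambda;\L,\zeta)$ to count those characters $\vartheta$ of $\G_\rho^F$ of defect $d$, above $\lambda$, with $\bl(\vartheta)^{\G^F}=B$, that via Clifford restriction to $\L_0^F$ lie in the series $\mathcal{E}_e(\L_0^F,(\L,\zeta))$. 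This contribution vanishes unless some $\G^F$-conjugate of $\L$ is contained in $\L_0$. The series with $\L=\G$ contributes exactly $k^d_c(B,\lambda)$ and nothing to the chain sum, so the conjecture reduces to proving, for each $e$-cuspidal pair with $\L\subsetneq\G$, the identity
\[
k^d(B,\lambda;\L,\zeta)=\sum_{\rho}(-1)^{|\rho|+1}k^d(B_\rho,\lambda;\L,\zeta).
\]

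To establish this I would exploit that the subposet $\CL_e^\star(\G,F)_{\geq\L}$ of proper $e$-split Levi subgroups containing $\L$ is $\n_{\G^F}(\L,\zeta)$-join-contractible via $\L$ itself, in the sense of Corollary \ref{cor:Criterion for G-join-contractibility}. Translating each summand through the relative Weyl group parametrisation reduces the identity to an equivariant Euler-characteristic vanishing on this contractible subposet, leaving only the contribution of the one-element chain $\{\L\}$, which by construction equals $k^d(B,\lambda;\L,\zeta)$. The main obstacle is the uniform compatibility of the block-induction condition $\bl(\vartheta)^{\G^F}=B$ with the Clifford--Harish-Chandra decomposition at each $\G_\rho^F$: one must track block correspondence along the tower $\L\leq\L_0\leq\G$ and use that centralisers of abelian $\ell$-subgroups are $e$-split Levi subgroups, which is precisely what motivates the hypotheses encoded in $\pi(\G,F)$.
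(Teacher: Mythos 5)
The statement you were asked about is not a theorem of the paper; it is a \emph{conjecture} quoted verbatim from \cite[Conjecture C]{Ros-Generalized_HC_theory_for_Dade}. The paper does not prove it (and at the time of writing it is open in general), so there is no ``paper's own proof'' to compare against. What the paper actually does is (a) state it, (b) note that it follows from \cite[Conjecture D]{Ros-Generalized_HC_theory_for_Dade} by \cite[Theorem E]{Ros-Generalized_HC_theory_for_Dade}, (c) record that it has been reduced to extendibility conditions in \cite{Ros-Clifford_automorphisms_HC} and verified for unipotent characters in types $\mathbf{A},\mathbf{B},\mathbf{C}$ in \cite{Ros-Unip}, and (d) in Theorem~\ref{thm:Replacement theorem for Dade} show that, under the hypotheses of the paper, it is \emph{equivalent} to Dade's Projective Conjecture~\ref{conj:Dade}. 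You should not be presenting a proof of this statement at all.

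Beyond that, the sketch you wrote does not in fact constitute a proof, even heuristically. The series-by-series decomposition via $e$-Harish-Chandra theory is a reasonable and standard first move, but it presupposes the block-theoretic compatibility of $e$-Harish-Chandra series, which is itself a conjecture of Cabanes--Enguehard (encoded in the hypotheses \cite[Condition 3.3 / Conjecture 3.2]{Ros-Generalized_HC_theory_for_Dade}); invoking Bonnaf\'e--Dat--Rouquier does not settle it at the level of defects. More seriously, the final step is circular. You observe that $\CL_e^\star(\G,F)_{\geq\L}$ has $\L$ as a minimum and is therefore contractible, but contractibility alone tells you that the \emph{reduced} Euler characteristic of that subposet vanishes; it does not by itself equate $\sum_\rho(-1)^{|\rho|+1}k^d(B_\rho,\lambda;\L,\zeta)$ to $k^d(B,\lambda;\L,\zeta)$ unless you already know that the counts $k^d(B_\rho,\lambda;\L,\zeta)$ behave uniformly along the poset and that the term for $\rho=\{\L\}$ equals $k^d(B,\lambda;\L,\zeta)$. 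That last identity is a local-global counting statement entirely comparable in depth to the conjecture you are trying to prove; calling it ``by construction'' hides the whole difficulty. The ``main obstacle'' you name at the end --- tracking block induction and defect through the tower $\L\leq\L_0\leq\G$ against the relative Weyl group parametrisation --- is not a detail to be checked but is precisely the open content of the conjecture.
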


Now, define the set $\CP_e(B)$ of $e$-cuspidal pairs $(\M,\mu)$ (see \cite[Definition 3.5.19]{Gec-Mal20}) such that $\bl(\mu)^{\G^F}$ is defined and coincides with $B$, and consider its subset $\CP_e^\star(B)$ consisting of those such pairs such that $\M$ is strictly contained in $\G$. Moreover, let $\ab(\mu)$ be the set of irreducible characters of $\M^F$ of the form $\nu\mu$ for some linear character $\nu$ of $\M^F$. Finally, define the set $\CL^d_e(B)$ of quadruples $(\sigma,\M,\ab(\mu),\vartheta)$ where $\sigma$ is a chain of the simplicial complex $\Delta(\CL_e(\G,F))$ with smallest term $\L$ (in this case $\L$ is contained in each term of $\sigma$ and hence normalizes the chain), $(\M,\mu)$ is an element of the set $\CP^\star_e(B)$ with $\M\leq \L$, and $\vartheta$ is an irreducible character of the stabiliser $\G_\sigma^F$ with $\ell$-defect $d$, whose block $\bl(\vartheta)$ induces to $\G^F$ and satisfies $\bl(\vartheta)^{\G^F}=B$, and lying above some character in an $e$-Harish-Chandra series $\E(\L^F,(\M,\mu'))$ (see \cite[3.5.24]{Gec-Mal20}) with $\mu'\in\ab(\mu)$. We refer the reader to \cite[Section 5.2]{Ros-Generalized_HC_theory_for_Dade}) for further details. As for the Character Triple Conjecture, we partition $\CL_e^d(B)$ into two subsets $\CL_e^d(B)_\pm$ according to the parity of chains and denote by $\CL_e^d(B)_\pm/\G^F$ the corresponding sets of $\G^F$-orbits.

\begin{conj}[{{\cite[Conjecture D]{Ros-Generalized_HC_theory_for_Dade}}}]
\label{conj:CTC reductive}
For every $\ell$-block $B$ of $\G^F$ and every non-negative integer $d$, there exists an $\aut_\mathbb{F}(\G^F)_B$-equivariant bijection
\[\Lambda:\CL_e^d(B)_+/\G^F\to\CL_e^d(B)_-/\G^F\]
such that
\[\left(X_{\sigma,\vartheta},\G^F_\sigma,\vartheta\right)\iso{\G^F}\left(X_{\rho,\chi},\G_\rho^F,\chi\right)\]
for every $(\sigma,\M,\ab(\mu),\vartheta)\in\CL_e^d(B)_+$ and $(\rho,\K,\ab(\kappa),\chi)\in\CL_e^d(B)_-$ whose $\G^F$-orbits correspond under $\Lambda$, and where $X:=\G^F\rtimes \aut_\mathbb{F}(\G^F)$.
\end{conj}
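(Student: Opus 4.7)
The plan is to deduce the inductive condition by first reformulating it as a Character Triple Conjecture statement for $\G^F$, then applying the cancellation theorem for CTC (Theorem \ref{thm:Cancellation theorem for CTC}) to restrict attention to $e_{\ell}(q)$-closed abelian $\ell$-chains, and finally importing the bijection provided by \cite[Conjecture D]{Ros-Generalized_HC_theory_for_Dade} through the identification of such chains with chains of $e_{\ell}(q)$-split Levi subgroups.

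First, I would unfold \cite[Definition 6.7]{Spa17}. Under the hypothesis that $\G^F$ is the universal covering group of the simple quotient $S=\G^F/\z(\G)^F$ and $\ell\nmid|\z(\G)^F|$, the inductive condition reduces to verifying Conjecture \ref{conj:CTC} for $G=\G^F\unlhd X=\G^F\rtimes D$, where $D\leq\aut_\mathbb{F}(\G^F)$ is chosen to induce $\out(S)$. The restrictions $\ell\geq 5$, $\ell$ good, and $\ell\nmid|\z(\G)^F|$ (which subsumes the case ${}^3{\bf D}_4$ with $\ell\neq 3$) ensure that $\ell\in\pi(\G,F)$, that $D$ can indeed be realised inside $\aut_\mathbb{F}(\G^F)$, and that Clifford theory plus the required block-theoretic compatibilities (via \cite[Proposition 13.19]{Cab-Eng04}) are available. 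Note $Z=\z(\G)^F_\ell=1$, so $\n_X(Z)_B=X_B$ throughout.

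Second, I would invoke Theorem \ref{thm:Cancellation theorem for CTC}. Its proof strings together the three $\aut_\mathbb{F}(\G^F)$-equivariant alternations constructed in Lemma \ref{lem:abelian and all subgroups}, Proposition \ref{prop:weakly e-closed and abelian subgroups}, and Proposition \ref{prop:e-closed and weakly e-closed subgroups}. Because each alternation $\phi$ only inserts or deletes intermediate $\ell$-subgroups and satisfies $G_\sigma=G_{\phi(\sigma)}$, pairing characters via $\vartheta\mapsto\vartheta$ yields an $X_B$-equivariant bijection between $\C^d(B)_+/\G^F$ and $\C^d(B)_-/\G^F$ restricted to the complement of chains in $\Delta(\ab_\ell(\G^F)^{\gamma_{\ell,e}})$, while preserving $(X_{\sigma,\vartheta},G_\sigma,\vartheta)\iso{\G^F}(X_{\phi(\sigma),\vartheta},G_{\phi(\sigma)},\vartheta)$ tautologically. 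Hence verifying CTC for $\G^F\unlhd X$ reduces to producing an $X_B$-equivariant $\iso{\G^F}$-preserving bijection between the positive and negative parts of the CTC set indexed by chains of $e_{\ell}(q)$-closed abelian $\ell$-subgroups.

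Third, using Lemma \ref{lem:e-split and e-closed subgroups} and Proposition \ref{prop:Replacement theorem for Dade}, the $\G^F$-poset of $e_{\ell}(q)$-closed abelian $\ell$-subgroups of $\G^F$ is $X$-equivariantly isomorphic, via $A\mapsto\c_\G^\circ(A)$ (order-reversing) and $\L\mapsto\z^\circ(\L)^F_\ell$, to the $\G^F$-poset $\CL_{e_{\ell}(q)}(\G,F)$, with matching stabilisers and compatible block induction. Partitioning the characters $\vartheta$ over a Levi chain $\sigma$ according to the $e_{\ell}(q)$-Harish-Chandra series $\E(\L^F,(\M,\mu'))$ in which they lie, with $\L$ the smallest term of $\sigma$, $(\M,\mu)\in\CP_{e_{\ell}(q)}^\star(B)$, and $\mu'\in\ab(\mu)$, matches $\C^d(B)_\pm/\G^F$ with $\CL_{e_{\ell}(q)}^d(B)_\pm/\G^F$. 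Applying $\Lambda$ from Conjecture D then yields the desired bijection $\Omega$, and its stipulated $\iso{\G^F}$ relation delivers exactly the character-triple condition required by CTC.

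The main obstacle is the faithful transport of the relation $\iso{\G^F}$ across all three reduction steps, particularly in the $e$-Harish-Chandra decomposition where one must show that $\G^F$-block isomorphism at the level of the full Levi chain datum $(\sigma,\M,\ab(\mu),\vartheta)$ implies the plain CTC-level $\iso{\G^F}$ at $(X_{\sigma,\vartheta},G_\sigma,\vartheta)$. This is where the hypotheses $\ell\geq 5$ and $\ell$ good become indispensable: they guarantee the Jordan-type decomposition of blocks of the centralisers $\G_\sigma^F$ developed in \cite{Bro-Mal-Mic93} and \cite{Cab-Eng99}, the extendibility and Clifford-theoretic compatibilities for $e$-Harish-Chandra series needed to parametrise $\vartheta$ by an $e$-cuspidal pair, and the ability to carry the realising automorphism group $D$ entirely inside $\aut_\mathbb{F}(\G^F)$ so that the $\aut_\mathbb{F}(\G^F)_B$-equivariance of $\Lambda$ specialises to the $X_B$-equivariance required by the inductive condition.
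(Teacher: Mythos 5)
The statement you were asked to address is Conjecture \ref{conj:CTC reductive} itself, i.e.\ \cite[Conjecture D]{Ros-Generalized_HC_theory_for_Dade}: the existence of an $\aut_\mathbb{F}(\G^F)_B$-equivariant bijection $\Lambda$ between $\CL_e^d(B)_+/\G^F$ and $\CL_e^d(B)_-/\G^F$ inducing $\G^F$-block isomorphisms of character triples. This is an open conjecture that the paper does not prove; it is imported from \cite{Ros-Generalized_HC_theory_for_Dade}, has been reduced to certain extendibility conditions in \cite{Ros-Clifford_automorphisms_HC}, and is verified only for unipotent characters in types $\mathbf{A}$, $\mathbf{B}$, $\mathbf{C}$ in \cite{Ros-Unip}. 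Your proposal does not prove this statement: in your third step you explicitly ``apply $\Lambda$ from Conjecture D,'' so the object whose existence is to be established is taken as an input. As a proof of the stated conjecture the argument is therefore circular.

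What you have actually sketched is the implication in the opposite direction of the paper's logical flow from the conjecture, namely that Conjecture \ref{conj:CTC reductive} implies the Character Triple Conjecture \ref{conj:CTC} for $\G^F\unlhd\G^F\rtimes\aut_\mathbb{F}(\G^F)$ and hence the inductive condition for Dade's Conjecture. That is the content of Theorem \ref{thm:Replacement theorem for CTC} and Theorem \ref{thm:Main replacement theorem for inductive Dade's condition}, and your outline of it is essentially faithful to the paper: the cancellation via Theorem \ref{thm:Cancellation theorem for CTC}, the chain identification via Lemma \ref{lem:e-split and e-closed subgroups} and Proposition \ref{prop:Replacement theorem for CTC}, and the $e$-Harish-Chandra parametrisation of characters over a Levi chain using \cite[Lemma 5.5]{Ros-Generalized_HC_theory_for_Dade} under \cite[Condition 3.3]{Ros-Generalized_HC_theory_for_Dade}. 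But none of this constitutes a proof of Conjecture \ref{conj:CTC reductive}; producing the bijection $\Lambda$ requires genuinely different (and currently incomplete) input, namely the Deligne--Lusztig-theoretic parametrisations and equivariant extendibility statements for characters of $e$-split Levi subgroups addressed in \cite{Ros-Clifford_automorphisms_HC} and \cite{Ros-Unip}.
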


Before proceeding to the main results of this section, notice that Conjecture \ref{conj:Dade reductive} is a consequence of Conjecture \ref{conj:CTC reductive} (see \cite[Theorem E]{Ros-Generalized_HC_theory_for_Dade}) and that the latter has been reduced to certain extendibility conditions for characters of $e$-split Levi subgroups \cite{Ros-Clifford_automorphisms_HC}. More recently, adaptations of these conjectures to the subset of unipotent characters have been verified in \cite{Ros-Unip} for groups of types $\bf{A}$, $\bf{B}$ and $\bf{C}$.

\subsection{Cancellation theorems for finite reductive groups}
\label{sec:Cancellation}

Combining the alternations constructed in Section \ref{sec:Alternations} we obtain cancellation theorems for Dade's Projective Conjecture \ref{conj:Dade} and the Character Triple Conjecture \ref{conj:CTC}. More precisely, we show that the only chains that contribute to the alternating sums, and bijections, predicted by these conjectures are those whose terms are $e$-closed abelian $\ell$-subgroups. We keep $\G$, $F$, $q$ and $\ell$ as in the previous sections and recall that we are assuming $e=e_{\ell}(q)$. First, we consider the alternating sum from Dade's Projective Conjecture \ref{conj:Dade}.

\begin{theo}
\label{thm:Cancellation theorem for Dade}
Let $(\G,F)$ be a finite reductive group and consider a prime $\ell\in\pi(\G,F)$ not dividing the order of $\z(\G)^F$. Fix an $\ell$-block $B$ of $\G^F$, a non-negative integer $d$, and an irreducible character $\lambda$ of a subgroup $Z$ of $\z(\G)^F$. Then
\begin{equation}
\label{eq:Cancellation theorem for Dade}
\sum\limits_{\sigma\in\Delta(\mathcal{S}_\ell(\G^F))/\G^F}(-1)^{|\sigma|}\k^d(B_\sigma,\lambda)=\sum\limits_{\rho\in\Delta(\ab_\ell(\G^F)^{\gamma_{\ell,e}})/\G^F}(-1)^{|\rho|}\k^d(B_\rho,\lambda).
\end{equation}
\end{theo}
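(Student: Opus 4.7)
The strategy is to telescope through the three alternations established earlier—Lemma \ref{lem:abelian and all subgroups}, Proposition \ref{prop:weakly e-closed and abelian subgroups}, and Proposition \ref{prop:e-closed and weakly e-closed subgroups}—corresponding to the successive inclusions
\[
\Delta\bigl(\ab_\ell(\G^F)^{\gamma_{\ell,e}}\bigr) \hookrightarrow \Delta\bigl(\ab_\ell(\G^F)^{\omega_{\ell,e}}\bigr) \hookrightarrow \Delta\bigl(\ab_\ell(\G^F)\bigr) \hookrightarrow \Delta\bigl(\mathcal{S}_\ell(\G^F)\bigr),
\]
and applying Lemma \ref{lem:Alternation and G-stable functions} at each step to cancel the contribution of the chains outside the smaller subcomplex.

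First I would collect some simplifications coming from the hypothesis $\ell \nmid |\z(\G)^F|$: this forces $\z(\G)^F_\ell = \z^\circ(\G)^F_\ell = 1$, so that the posets appearing in Propositions \ref{prop:weakly e-closed and abelian subgroups} and \ref{prop:e-closed and weakly e-closed subgroups} coincide with $\ab_\ell(\G^F)^{\omega_{\ell,e}}$ and $\ab_\ell(\G^F)^{\gamma_{\ell,e}}$, and chains in all the relevant simplicial complexes begin at the trivial subgroup. In particular the trivial subgroup is itself $e$-closed: since $\c_\G^\circ(1)=\G$ and $\z^\circ(\G)_{\Phi_e}\leq \z(\G)$, we obtain $\gamma_{\ell,e}(1) = \z^\circ(\c_\G(\z^\circ(\G)_{\Phi_e}))^F_\ell = \z^\circ(\G)^F_\ell = 1$, so the RHS of \eqref{eq:Cancellation theorem for Dade} is a well-formed alternating sum.

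Next, consider the function $f(H):= \k^d(B_H,\lambda)$, counting $\vartheta \in \irr(H\mid \lambda)$ of $\ell$-defect $d$ with $\bl(\vartheta)^{\G^F} = B$. By \cite[Lemma 3.2]{Kno-Rob89} the Brauer induction $\bl(\vartheta)^{\G^F}$ is defined whenever $H$ is the stabiliser of an $\ell$-chain in $\G^F$, and $f$ is manifestly invariant under $\G^F$-conjugation of $H$; extending $f$ by zero on other subgroups yields a $\G^F$-stable function in the sense of Section \ref{sec:Alternations}. Crucially, each of the three alternations is constructed by canonical chain manipulations—insertion or deletion of the commutator product $[P_n,P_n]P_{m-1}$, iteration of $\omega_{\ell,e}$, or iteration of $\gamma_{\ell,e}$—and hence preserves stabilisers: $\G^F_\sigma = \G^F_{\phi(\sigma)}$ for every chain $\sigma$ in the complement of the smaller subcomplex. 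Thus Lemma \ref{lem:Alternation and G-stable functions}, applied to $\G^F$-alternations obtained by restricting the $\aut_\mathbb{F}(\G^F)$-alternations above, yields the chain of equalities
\begin{align*}
\sum_{\sigma\in\Delta(\mathcal{S}_\ell(\G^F))/\G^F}(-1)^{|\sigma|}\k^d(B_\sigma,\lambda)
&=\sum_{\sigma\in\Delta(\ab_\ell(\G^F))/\G^F}(-1)^{|\sigma|}\k^d(B_\sigma,\lambda)\\
&=\sum_{\sigma\in\Delta(\ab_\ell(\G^F)^{\omega_{\ell,e}})/\G^F}(-1)^{|\sigma|}\k^d(B_\sigma,\lambda)\\
&=\sum_{\sigma\in\Delta(\ab_\ell(\G^F)^{\gamma_{\ell,e}})/\G^F}(-1)^{|\sigma|}\k^d(B_\sigma,\lambda),
\end{align*}
which is precisely \eqref{eq:Cancellation theorem for Dade}.

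The one delicate bookkeeping point is that $f$ as described is only manifestly defined on stabilisers of $\ell$-chains. This can either be dealt with by the extension-by-zero alluded to above, or by observing that Lemma \ref{lem:Alternation and G-stable functions} only uses the identity $\G^F_\sigma = \G^F_{\phi(\sigma)}$ together with $\G^F$-invariance of the count, so that the proof goes through verbatim for functions defined only on the stabilisers appearing in $\Delta \setminus \Delta'$. Since each alternation preserves both stabilisers and the property of being an $\ell$-chain, the set of characters $\vartheta$ counted by $\k^d(B_\sigma,\lambda)$ literally does not change upon passing to $\phi(\sigma)$, and the three cancellations combine to give the theorem.
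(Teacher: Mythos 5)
Your proof is correct and follows essentially the same approach as the paper: you invoke Lemma \ref{lem:abelian and all subgroups}, Proposition \ref{prop:weakly e-closed and abelian subgroups}, and Proposition \ref{prop:e-closed and weakly e-closed subgroups} to obtain three $\G^F$-alternations, use the hypothesis $\ell\nmid|\z(\G)^F|$ to identify all the intermediate posets with those of the statement, and feed the $\G^F$-stable function $\k^d(B_{-},\lambda)$ into Lemma \ref{lem:Alternation and G-stable functions}. The only cosmetic difference is that you apply Lemma \ref{lem:Alternation and G-stable functions} three times in a telescope, whereas the paper splices $\phi_1,\phi_2,\phi_3$ into a single piecewise alternation and applies the lemma once; these are interchangeable.
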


\begin{proof}
Under the above hypotheses the subgroups $Z$ and $\z(\G)^F_\ell$ are trivial and hence we have $\ab_\ell(\G^F,\z(\G)^F_\ell)=\ab_\ell(\G^F)=\ab_{\ell,Z}(\G^F,Z)$ and $\mathcal{S}_\ell(\G^F,Z)=\mathcal{S}_\ell(\G^F)$. Then, by applying Lemma \ref{lem:abelian and all subgroups}, Proposition \ref{prop:weakly e-closed and abelian subgroups}, and Proposition \ref{prop:e-closed and weakly e-closed subgroups} respectively, we obtain a $\G^F$-alternation $\phi_1$ of the simplicial complex of abelian $\ell$-subgroups $\Delta(\ab_\ell(\G^F))$ in the simplicial complex of all $\ell$-subgroups $\Delta(\mathcal{S}_\ell(\G^F))$, a $\G^F$-alternation $\phi_2$ of the simplicial complex of weakly $e$-closed abelian $\ell$-subgroups $\Delta(\ab_\ell(\G^F)^{\omega_{\ell,e}})$ in $\Delta(\ab_\ell(\G^F))$, and a $\G^F$-alternation $\phi_3$ of the simplicial complex of $e$-closed abelian $\ell$-subgroups $\Delta(\ab_\ell(\G^F)^{\gamma_{\ell,e}})$ in $\Delta(\ab_\ell(\G^F)^{\omega_{\ell,e}})$. Now, combining $\phi_1$, $\phi_2$ and $\phi_3$ we obtain a $\G^F$-alternation of the simplicial complex of $e$-closed abelian $\ell$-subgroups $\Delta(\ab_\ell(\G^F)^{\gamma_{\ell,e}})$ in the simplicial complex of all $\ell$-subgroups $\Delta(\mathcal{S}_\ell(\G^F))$. More precisely, we define the map
\[\phi:\Delta\left(\mathcal{S}_\ell\left(\G^F\right)\right)\setminus \Delta\left(\ab_\ell\left(\G^F\right)^{\gamma_{\ell,e}}\right)\to \Delta\left(\mathcal{S}_\ell\left(\G^F\right)\right)\setminus \Delta\left(\ab_\ell\left(\G^F\right)^{\gamma_{\ell,e}}\right)\]
by setting
\[\phi(\sigma):=\begin{cases}
\phi_1(\sigma), & \text{if }\sigma\in \Delta\left(\mathcal{S}_\ell\left(\G^F\right)\right)\setminus \Delta\left(\ab_\ell\left(\G^F\right)\right)
\\
\phi_2(\sigma), & \text{if }\sigma\in \Delta\left(\ab_\ell\left(\G^F\right)\right)\setminus \Delta\left(\ab_\ell\left(\G^F\right)^{\omega_{\ell,e}}\right)
\\
\phi_3(\sigma), & \text{if }\sigma\in \Delta\left(\ab_\ell\left(\G^F\right)^{\omega_{\ell,e}}\right)\setminus \Delta\left(\ab_\ell\left(\G^F\right)^{\gamma_{\ell,e}}\right)
\end{cases}\]
for every simplex $\sigma$ belonging to $\Delta(\mathcal{S}_\ell(\G^F))$ but outside $\Delta(\ab_\ell(\G^F)^{\gamma_{\ell,e}})$. From this definition it is immediate to verify that the map $\phi$ satisfies the requirements of Definition \ref{def:Alternations}. Next, we define the $\G^F$-stable function
\[f_{B,d,\lambda}(H):=\begin{cases}
\left|\left\lbrace\enspace\psi\in\irr(H)\middle|\enspace \substack{ \bl(\psi)^G \text{ is defined and equal to }B, \\ \psi\text{ has }\ell\text{-defect }d\text{ and lies above }\lambda}\right\rbrace\right|, & \text{if }\z(\G)^F\leq H
\\
0, & \text{if }\z(\G)^F\nleq H
\end{cases}\]
for every subgroup $H$ of $\G^F$. Since the centre $\z(\G)^F$ is contained in every stabiliser $\G^F_\sigma$ and block induction (in the sense of Brauer) is defined from $\G_\sigma^F$ to $\G^F$ (see \cite[Lemma 3.2]{Kno-Rob89}), observe that $f_{B,d,\lambda}(\G^F_\sigma)$ coincides with $\k^d(B_\sigma,\lambda)$ for every $\sigma\in\Delta(\mathcal{S}_\ell(\G^F))$. The result now follows by applying Lemma \ref{lem:Alternation and G-stable functions} with $f=f_{B,d,\lambda}$ and considering the simplicial complexes $\mathcal{X}:=\Delta(\mathcal{S}_\ell(\G^F))$ and $\mathcal{X}':=\Delta(\ab_\ell(\G^F)^{\gamma_{\ell,e}})$.
\end{proof}

Next, we consider the bijection from the Character Triple Conjecture. For this purpose we need to introduce some further notation. More precisely, let $\C^d_{\gamma_{\ell,e}}(B)$ denote the subset of $\C^d(B)$ consisting of those pairs $(\sigma,\vartheta)$ with $\sigma$ belonging to $\Delta(\ab_\ell(\G^F)^{\gamma_{\ell,e}})$. We then define $\C^d_{\gamma_{\ell,e}}(B)^{\rm c}$ to be the complement of $\C^d_{\gamma_{\ell,e}}(B)$ inside $\C^d(B)$. Notice that $\aut_\mathbb{F}(\G^F)_B$ acts on these two sets because $\ab_\ell(\G^F)^{\gamma_{\ell,e}}$ is an $\aut_\mathbb{F}(\G^F)$-stable subset of $\mathcal{S}_\ell(\G^F)$. Then, as usual, we define $\C^d_{\gamma_{\ell,e}}(B)_\pm:=\C^d_{\gamma_{\ell,e}}(B)\cap \C^d(B)_\pm$ and denote by $\C^d_{\gamma_{\ell,e}}(B)_\pm/\G^F$ the corresponding set of $\G^F$-orbits. A similar notation is used for the complement set $\C^d_{\gamma_{\ell,e}}(B)^{\rm c}$. With this notation we have the following theorem.

\begin{theo}
\label{thm:Cancellation theorem for CTC}
Let $(\G,F)$ be a finite reductive group and consider a prime $\ell\in\pi(\G,F)$ not dividing the order of $\z(\G)^F$. Fix an $\ell$-block $B$ of $\G^F$ and a non-negative integer $d$. Then there exists an $\aut_\mathbb{F}(\G^F)_B$-equivariant bijection
\[\Upsilon:\C^d_{\gamma_{\ell,e}}(B)^{\rm c}_+/\G^F\to \C^d_{\gamma_{\ell,e}}(B)^{\rm c}_-/\G^F\]
such that
\[\left(X_{\sigma,\vartheta},\G^F_\sigma,\vartheta\right)\iso{\G^F}\left(X_{\rho,\chi},\G^F_\rho,\chi\right)\]
for every $(\sigma,\vartheta)\in\C^d_{\gamma_{\ell,e}}(B)_+^{\rm c}$ and $(\rho,\chi)\in\Upsilon(\overline{(\sigma,\vartheta)})$ and where $X:=\G^F\rtimes \aut_\mathbb{F}(\G^F)$.
\end{theo}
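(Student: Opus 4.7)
The plan is to construct $\Upsilon$ directly from the combined $\aut_\mathbb{F}(\G^F)$-alternation
\[\phi:\Delta\left(\mathcal{S}_\ell(\G^F)\right)\setminus\Delta\left(\ab_\ell(\G^F)^{\gamma_{\ell,e}}\right)\longrightarrow\Delta\left(\mathcal{S}_\ell(\G^F)\right)\setminus\Delta\left(\ab_\ell(\G^F)^{\gamma_{\ell,e}}\right)\]
built in the proof of Theorem \ref{thm:Cancellation theorem for Dade} by splicing together the three alternations from Lemma \ref{lem:abelian and all subgroups}, Proposition \ref{prop:weakly e-closed and abelian subgroups} and Proposition \ref{prop:e-closed and weakly e-closed subgroups}. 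Under the hypothesis $\ell\in\pi(\G,F)$ with $\ell\nmid|\z(\G)^F|$ we have $\z(\G)^F_\ell=1$, so the three alternations apply with the trivial central subgroup and can be composed on the nose, yielding a single involution $\phi$ on the chains that are not $e$-closed abelian.

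The crucial observation is that each of the three building blocks modifies a chain $\sigma$ by inserting or deleting exactly one term, and that term is obtained from $\sigma$ by a canonical construction: the commutator-type subgroup $[P_n,P_n]P_{m-1}$ in the proof of Lemma \ref{lem:abelian and all subgroups}, the iterated weak closure $\omega_{\ell,e}^{t_{A_m}}(A_m)$ in Proposition \ref{prop:weakly e-closed and abelian subgroups}, and the iterated $e$-closure $\gamma_{\ell,e}^{r_{A_m}}(A_m)$ in Proposition \ref{prop:e-closed and weakly e-closed subgroups}. Each of these operations is equivariant under the action of $X:=\G^F\rtimes\aut_\mathbb{F}(\G^F)$ on subgroups of $\G^F$, by Lemma \ref{lem:Properties of e-closure}(ii), Lemma \ref{lem:Properties of weak e-closure}(ii) and the fact that commutator subgroups are characteristic. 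Consequently, any element $x\in X$ stabilising $\sigma$ also stabilises the inserted or deleted term, so that $X_\sigma=X_{\phi(\sigma)}$ and in particular $\G^F_\sigma=\G^F_{\phi(\sigma)}$.

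With stabiliser preservation in hand, define
\[\Upsilon\left(\overline{(\sigma,\vartheta)}\right):=\overline{(\phi(\sigma),\vartheta)}.\]
Since the character $\vartheta$ and its ambient group are unchanged, its defect $d$, its block $\bl(\vartheta)$, and the induced block $\bl(\vartheta)^{\G^F}=B$ are all preserved, so $\Upsilon$ lands in $\C^d(B)$. The equality $|\phi(\sigma)|=|\sigma|\pm1$ swaps the $+$ and $-$ subsets, and $\phi^2={\rm id}$ makes $\Upsilon$ an involution, hence a bijection. Equivariance under $\aut_\mathbb{F}(\G^F)_B$ follows from the $\aut_\mathbb{F}(\G^F)$-equivariance of $\phi$ together with the natural action on characters of the (unchanged) stabilisers. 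Finally, since $X_{\sigma,\vartheta}=X_{\phi(\sigma),\vartheta}$ and the characters coincide, the triples $(X_{\sigma,\vartheta},\G^F_\sigma,\vartheta)$ and $(X_{\phi(\sigma),\vartheta},\G^F_{\phi(\sigma)},\vartheta)$ are literally identical, so the required relation $\iso{\G^F}$ holds by reflexivity.

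The main technical point to verify is the $X$-equivariance of the three canonical operations and the resulting equality $X_\sigma=X_{\phi(\sigma)}$. This reduces to checking that the inductive choices made inside each $\phi_i$ (for instance, the maximal index $m$ in the construction of $\phi_2$ and the minimal one in the construction of $\phi_3$) transform correctly under automorphisms, so that the term added to or removed from $\sigma^\alpha$ is the image under $\alpha$ of the term added to or removed from $\sigma$. This is the only delicate bookkeeping; once it is carried out, no additional representation-theoretic input beyond what already appears in Theorem \ref{thm:Cancellation theorem for Dade} is required, and the theorem follows.
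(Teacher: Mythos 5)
Your proposal is correct and follows the same route as the paper: take the spliced $\aut_\mathbb{F}(\G^F)$-alternation $\phi$ from the proof of Theorem \ref{thm:Cancellation theorem for Dade}, define $\Upsilon$ by $(\sigma,\vartheta)\mapsto(\phi(\sigma),\vartheta)$ on orbits, and deduce stabiliser preservation and the required $\iso{\G^F}$ (trivially, since the triples agree term-by-term) from the equivariance of $\phi$. The only thing worth noting is that the "delicate bookkeeping" you defer to the end is in fact already discharged in the cited results — Lemma \ref{lem:abelian and all subgroups}, Proposition \ref{prop:weakly e-closed and abelian subgroups}, and Proposition \ref{prop:e-closed and weakly e-closed subgroups} each explicitly establish the $\aut_\mathbb{F}(\G^F)$-equivariance of their alternation, so $X_\sigma=X_{\phi(\sigma)}$ follows immediately from the fact that the conjugation action of $X$ factors through $\aut_\mathbb{F}(\G^F)$; no further verification is required.
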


\begin{proof}
As in the proof of Theorem \ref{thm:Cancellation theorem for Dade} we can construct an $\aut_\mathbb{F}(\G^F)$-alternation $\phi$ of the simplicial complex of $e$-closed abelian $\ell$-subgroups $\Delta(\ab_\ell(\G^F)^{\gamma_{\ell,e}})$ in the simplicial complex of all $\ell$-subgroups $\Delta(\mathcal{S}_\ell(\G^F))$. We define the map
\[\Upsilon':\C^d_{\gamma_{\ell,e}}(B)_+^{\rm c}\to\C^d_{\gamma_{\ell,e}}(B)_-^{\rm c}\]
by setting
\[\Upsilon'(\sigma,\vartheta):=(\phi(\sigma),\vartheta)\]
for every $(\sigma,\vartheta)\in\C_{\gamma_{\ell,e}}^d(B)_+$. Observe that this map is a well-defined $\aut_{\mathbb{F}}(\G^F)_B$-equivariant bijection because of the properties of $\phi$. In fact, since $\phi$ is $\aut_\mathbb{F}(\G^F)$-equivariant (and in particular $\G^F$-equivariant), we know that $\G^F_\sigma=\G^F_{\phi(\sigma)}$ so that $\vartheta$ is an irreducible character of $\G^F_{\phi(\sigma)}$. Moreover, since $|\phi(\sigma)|=|\sigma|\pm 1$, we deduce that the pair $(\phi(\sigma),\vartheta)$ belongs to $\C_{\gamma_{\ell,e}}^d(B)_-^{\rm c}$. Notice also that $\Upsilon'$ is a bijection: the inverse of $\Upsilon'$ is easily described as $(\Upsilon')^{-1}(\phi(\sigma),\vartheta):=(\phi(\phi(\sigma)),\vartheta)=(\sigma,\vartheta)$. In addition, again by using the equivariance properties of $\phi$, observe that $X_\sigma=X_{\phi(\sigma)}$ and therefore that
\[\left(X_{\sigma,\vartheta},\G^F_\sigma,\vartheta\right)\iso{\G^F}\left(X_{\phi(\sigma),\vartheta},\G^F_{\phi(\sigma)},\vartheta\right).\]
If we now define $\Upsilon$ to be the map induced by $\Upsilon'$ on the corresponding sets of $\G^F$-orbits, we obtain a bijection with the properties required in the statement.
\end{proof}

\subsection{Replacing $\ell$-chains with $e$-chains}

Our aim is now to use the cancellation theorems described in the previous section in order to establish a connection between Dade's Projective Conjecture \ref{conj:Dade} and Conjecture \ref{conj:Dade reductive} as well as between the Character Triple Conjecture \ref{conj:CTC} and Conjecture \ref{conj:CTC reductive}. First, by applying the characterisation of $e$-closed abelian $\ell$-subgroups given in Lemma \ref{lem:e-split produces e-closed}, we construct an equivariant bijection between the set of $e$-chains belonging to $\Delta(\CL_e(\G,F))$ and the set of $\ell$-chains belonging to $\Delta(\ab_\ell(\G^F,\z(\G)^F_\ell)^{\gamma_{\ell,e}})$, that is, $\ell$-chains consisting of $e$-closed abelian $\ell$-subgroups. Keep $\G$, $F$, $q$ and $\ell$ as above and assume $e=e_{\ell}(q)$.

\begin{lem}
\label{lem:e-split and e-closed subgroups}
Assume that $\ell$ is good for $\G$ and does not divide $|\z(\G)^F:\z^\circ(\G)^F|$. Then the maps
\begin{align*}
\iota:\Delta\left(\CL_e(\G,F)\right)&\to \Delta\left(\ab_\ell\left(\G^F,\z(\G)^F_\ell\right)^{\gamma_{\ell,e}}\right)
\\
\left\lbrace\L_i\right\rbrace_i &\mapsto \left\lbrace\z(\L_i)^F_\ell\right\rbrace_i
\end{align*}
and 
\begin{align*}
\delta:\Delta\left(\ab_\ell\left(\G^F,\z(\G)^F_\ell\right)^{\gamma_{\ell,e}}\right) &\to \Delta\left(\CL_e(\G,F)\right)
\\
\left\lbrace A_i\right\rbrace_i &\mapsto \left\lbrace\c_\G\left(\z^\circ\left(\c^\circ_\G(A_i)\right)_{\Phi_e}\right)\right\rbrace_i
\end{align*}
are $\aut_\mathbb{F}(\G^F)$-equivariant bijections satisfying $\delta=\iota^{-1}$, and with $|\iota(\sigma)|=|\sigma|$ and $|\delta(\rho)|=|\rho|$ for every $e$-chain $\sigma$ belonging to $\Delta(\CL_e(\G,F))$ and every $\ell$-chain $\rho$ belonging to $\Delta(\ab_\ell(\G^F,\z(\G)^F_\ell)^{\gamma_{\ell,e}})$.
\end{lem}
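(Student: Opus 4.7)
The plan is to verify four claims: well-definedness of $\iota$ and $\delta$ (including the length-preservation), $\aut_\mathbb{F}(\G^F)$-equivariance, and the two identities $\delta\circ\iota=\mathrm{id}$ and $\iota\circ\delta=\mathrm{id}$. The entire argument is assembled from the characterisations of $e$-closed subgroups already established in Lemma \ref{lem:e-split produces e-closed} and Lemma \ref{lem:Abelian subgroup of e-split Levi subgroup}, plus the order-reversal observations used in the proof of Theorem \ref{thm:Homotopy equivalence for e-split Levi}.

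First I would show $\iota$ is well-defined. Given an $e$-chain $\sigma=\{\L_0<\L_1<\dots<\L_n=\G\}$, setting $A_i:=\z(\L_i)^F_\ell$ I claim that (a) each $A_i$ is $e$-closed, and (b) the $A_i$ are pairwise distinct and satisfy $A_0\supsetneq A_1\supsetneq\cdots\supsetneq A_n=\z(\G)^F_\ell$. For (a), under the standing hypotheses each Levi $\L_i$ still satisfies $\ell\nmid|\z(\L_i)^F:\z^\circ(\L_i)^F|$ (a consequence of $\ell$ being good for $\G$ and $\ell\nmid|\z(\G)^F:\z^\circ(\G)^F|$), hence $\z(\L_i)^F_\ell=\z^\circ(\L_i)^F_\ell$, and then Lemma \ref{lem:e-split produces e-closed} identifies it as an $e$-closed abelian $\ell$-subgroup. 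For (b), distinctness uses Lemma \ref{lem:Abelian subgroup of e-split Levi subgroup}, which recovers $\L_i$ as $\c_\G(\z^\circ(\c_\G^\circ(A_i))_{\Phi_e})$, so $A_i=A_j$ forces $\L_i=\L_j$; that $A_n=\z(\G)^F_\ell$ is the smallest term is clear. Reading the resulting chain in increasing order gives a simplex of $\Delta(\ab_\ell(\G^F,\z(\G)^F_\ell)^{\gamma_{\ell,e}})$ of the same dimension as $\sigma$.

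For $\delta$, given $\rho=\{\z(\G)^F_\ell=A_0<A_1<\dots<A_n\}$ with each $A_i$ $e$-closed, set $\L_i:=\c_\G(\z^\circ(\c_\G^\circ(A_i))_{\Phi_e})$. By Corollary \ref{cor:Abelian subgroups to e-split Levi subgroups} each $\L_i$ is an $e$-split Levi; moreover $A_i=\gamma_{\ell,e}(A_i)=\z^\circ(\L_i)^F_\ell$, so distinct $A_i$'s produce distinct $\L_i$'s. The argument in the proof of Theorem \ref{thm:Homotopy equivalence for e-split Levi} (via $\z$ applied to the inclusion $\c_\G^\circ(A_j)\leq\c_\G^\circ(A_i)$ for $A_i\leq A_j$) shows that the map is order-reversing, hence $\L_0\supsetneq\L_1\supsetneq\cdots\supsetneq\L_n$; since $A_0=\z^\circ(\G)^F_\ell$ is central in $\G$ we have $\L_0=\G$, so the chain belongs to $\Delta(\CL_e(\G,F))$ (again with the same dimension).

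The two inverse identities are now essentially bookkeeping. For $\delta\circ\iota$ the $i$-th term is $\c_\G(\z^\circ(\c_\G^\circ(\z(\L_i)^F_\ell))_{\Phi_e})$, which equals $\L_i$ by Lemma \ref{lem:Abelian subgroup of e-split Levi subgroup}. For $\iota\circ\delta$ the $i$-th term is $\z(\L_i)^F_\ell=\z^\circ(\L_i)^F_\ell=\gamma_{\ell,e}(A_i)=A_i$, using the equality $\z(\L_i)^F_\ell=\z^\circ(\L_i)^F_\ell$ from the first paragraph together with $e$-closedness of $A_i$. Equivariance under $\aut_\mathbb{F}(\G^F)$ is immediate since each constituent operation ($\z$, $\z^\circ$, $\c_\G$, $\c_\G^\circ$, $(-)_{\Phi_e}$, and $(-)^F_\ell$) is preserved by automorphisms in $\aut_\mathbb{F}(\G^F)$. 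The only genuine subtlety—and hence the main thing to state carefully—is the transfer of the hypothesis $\ell\nmid|\z(\G)^F:\z^\circ(\G)^F|$ to each Levi $\L$ appearing in the chains, which is exactly what reconciles $\iota$ (defined via $\z$) with $\delta$ (defined via $\z^\circ$ through $\gamma_{\ell,e}$).
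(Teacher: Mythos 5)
Your proposal is correct and follows essentially the same route as the paper: both proofs reduce everything to Lemma \ref{lem:e-split produces e-closed} and Lemma \ref{lem:Abelian subgroup of e-split Levi subgroup}, use the inverse identities to derive strict order reversal, and observe equivariance term by term. The only thing I would tighten is that your assertion $\ell\nmid|\z(\L_i)^F:\z^\circ(\L_i)^F|$ for the Levi subgroups in the chain, while true and indeed the right way to reconcile $\z$ with $\z^\circ$, deserves an explicit citation (it is part of the content of \cite[Proposition 13.19]{Cab-Eng04}, the same reference underlying Lemma \ref{lem:Abelian subgroup of e-split Levi subgroup}), rather than being presented as an unsupported consequence.
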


\begin{proof}
Notice that $\z(\L)^F_\ell=\z^\circ(\L)^F_\ell$ is an $e$-closed abelian $\ell$-subgroup for every $e$-split Levi subgroup $\L$ of $(\G,F)$ as explained in Lemma \ref{lem:e-split produces e-closed}. Thus, given an $e$-chain $\sigma\in\Delta(\CL_e(\G,F))$, and recalling that $\G$ is the initial term of $\sigma$, we deduce that $\iota(\sigma)$ belongs to $\Delta(\ab_\ell(\G^F,\z(\G)^F_\ell)^{\gamma_{\ell,e}})$. On the other hand, the centraliser $\c_\G(\z^\circ(\c_\G^\circ(A))_{\Phi_e})$ is an $e$-split Levi subgroup of $(\G,F)$ for every $\ell$-subgroup $A$ of $\G^F$. Since $\c_\G(\z^\circ(\c_\G^\circ(\z(\G)^F_\ell))_{\Phi_e})=\G$, we deduce that $\delta(\rho)$ belongs to $\Delta(\CL_e(\G,F))$ whenever $\rho$ is an $\ell$-chain of $\G^F$ with starting term $\z(\G)^F_\ell$, and in particular whenever $\rho\in\Delta(\ab_\ell(\G^F,\z(\G)^F_\ell)^{\gamma_{\ell,e}})$. This shows that $\iota$ and $\delta$ are well-defined. It is also immediate to check that both maps commute with the action of $\aut_\mathbb{F}(\G^F)$.

Next, consider an $e$-chain $\sigma\in\Delta(\CL_e(\G,F))$ and observe that the terms of $\delta(\iota(\sigma))$ are given by $\c_\G(\z^\circ(\c_\G^\circ(\z(\L_i)^F_\ell))_{\Phi_e})$. But the latter coincides with $\L_i$ according to Lemma \ref{lem:Abelian subgroup of e-split Levi subgroup}. Conversely, if $\rho$ is an $\ell$-chain belonging to $\Delta(\ab_\ell(\G^F,\z(\G)^F_\ell)^{\gamma_{\ell,e}})$, then the terms of $\iota(\delta(\rho))$ are given by $\z^\circ(\c_\G(\z^\circ(\c_\G^\circ(A_i))_{\Phi_e}))^F_\ell=\gamma_{\ell,e}(A_i)=A_i$ where the last equality follows from the fact that $A_i$ is $e$-closed. This shows that $\delta=\iota^{-1}$.

To conclude, we need to show that both $\iota$ and $\delta$ preserve the length of chains. If $\L$ and $\L'$ are $e$-split Levi subgroups of $(\G,F)$ with $\L<\L'$, then we must have $\z(\L')^F_\ell<\z(\L)^F_\ell$ according to \cite[Proposition 13.19]{Cab-Eng04}. Furthermore, if $A$ and $A'$ are $e$-closed abelian $\ell$-subgroups with $A<A'$, then we have $\c_\G(\z^\circ(\c_\G^\circ(A))_{\Phi_e})>\c_\G(\z^\circ(\c_\G^\circ(A'))_{\Phi_e})$. In fact, notice that $\c_\G(\z^\circ(\c_\G^\circ(A))_{\Phi_e})\geq \c_\G(\z^\circ(\c_\G^\circ(A'))_{\Phi_e})$ and that we cannot have an equality since this would imply $A=\gamma_{\ell,e}(A)=\gamma_{\ell,e}(A')=A'$. It follows that $\iota$ and $\delta$ preserve the length of chains as required.
\end{proof}

In the subsequent proofs we make use of the following generalisation of \cite[Proposition 4.18]{Ros-Generalized_HC_theory_for_Dade}.

\begin{lem}
\label{lem:Defect zero characters and cuspidality}
Let $(\G,F)$ be a finite reductive group with dual $(\G^*,F^*)$ and consider a prime $\ell\in\pi(\G^*,F^*)$ not dividing the order of $\z(\G^*)^{F^*}$. If $\chi$ is an $e$-cuspidal character of $\G^F$, then $\chi$ has $\ell$-defect zero.
\end{lem}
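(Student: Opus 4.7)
The plan is to adapt the strategy of \cite[Proposition 4.18]{Ros-Generalized_HC_theory_for_Dade}, which handles the easier case of large primes, to the weaker hypothesis $\ell\in\pi(\G^*,F^*)$ with $\ell\nmid|\z(\G^*)^{F^*}|$. The route runs through Jordan decomposition of characters on the dual side, combined with the known behaviour of $e$-cuspidal unipotent characters.

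First I would fix a Jordan decomposition $\chi\leftrightarrow(s,\psi)$, where $s\in\G^{*F^*}$ is semisimple and $\psi$ is a unipotent character of $\c_{\G^*}(s)^{F^*}$. The hypothesis $\ell\in\pi(\G^*,F^*)$ ensures that $\ell$ is good for $\G^*$ and that $\ell\nmid|\z(\G^*)^{F^*}:\z^\circ(\G^*)^{F^*}|$, so Jordan decomposition statements apply cleanly and the results of Section~\ref{sec:Centralisers of abelian subgroups}, applied to $\G^*$, are available. The heart of the argument is then to prove that $s$ may be chosen to be an $\ell'$-element. Writing $s=s_\ell s_{\ell'}$ with $s_\ell$ the $\ell$-part, Lemma~\ref{lem:Abelian subgroups to Levi subgroups} applied to the abelian $\ell$-subgroup $\langle s_\ell\rangle\leq\G^{*F^*}$ shows that $\c_{\G^*}^\circ(s_\ell)$ is an $E_\ell(q)$-split Levi subgroup of $(\G^*,F^*)$. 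If $s_\ell\notin\z(\G^*)^{F^*}$, then this Levi subgroup is proper, and the dual side gives rise to a proper $e$-split Levi subgroup $\L<\G$ through which $\chi$ must be Lusztig-induced (because $\chi$ lies in $\E(\G^F,s)$ and the series $\E(\G^F,s)$ is contained in the image of $R_{\L^*}^{\G^*}$ as soon as $s\in\L^{*F^*}$), contradicting the $e$-cuspidality of $\chi$. The hypothesis $\ell\nmid|\z(\G^*)^{F^*}|$ then forces $s_\ell=1$.

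Once $s$ is $\ell'$, the Jordan decomposition degree formula reads
\[
\chi(1)=\bigl[\G^{*F^*}:\c_{\G^*}(s)^{F^*}\bigr]_{p'}\cdot\psi(1),
\]
and the $\ell'$-property of $s$ yields $|\G^F|_\ell=|\c_{\G^*}(s)^{F^*}|_\ell$. Since $e$-cuspidality of $\chi$ translates under Jordan decomposition to $e$-cuspidality of $\psi$ as a unipotent character of $\c_{\G^*}(s)^{F^*}$, the Brou\'e--Malle--Michel theorem on the $\Phi_e$-valuation of degrees of $e$-cuspidal unipotent characters (with $e=e_\ell(q)$ and $\ell$ good, applied inside $\c_{\G^*}(s)^{F^*}$) gives $\psi(1)_\ell=|\c_{\G^*}(s)^{F^*}|_\ell$. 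Combining these equalities yields $\chi(1)_\ell=|\G^F|_\ell$, i.e.\ $\chi$ has $\ell$-defect zero.

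The main obstacle is the step showing that $s$ is an $\ell'$-element. For large primes this follows essentially for free from the uniqueness of $e=e_\ell(q)$ such that $\ell\mid\Phi_e(q)$ and the rigid structure of $E_\ell(q)$-split Levi subgroups; for the broader range $\ell\in\pi(\G^*,F^*)$ one must instead carefully trace how the $\ell$-part of a semisimple element $s$ produces a proper $e$-split Levi subgroup via the centraliser machinery of Section~\ref{sec:Centralisers of abelian subgroups}, and then invoke compatibility of $e$-Harish-Chandra induction with the partition of $\irr(\G^F)$ into rational Lusztig series in order to contradict the $e$-cuspidality of $\chi$.
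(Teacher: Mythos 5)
Your skeleton---Jordan decomposition, kill the $\ell$-part of $s$ by $e$-cuspidality, control the unipotent factor by Brou\'e--Malle--Michel---is close in spirit to the paper's argument (which relies on the first part of \cite[Proposition 4.18]{Ros-Generalized_HC_theory_for_Dade}), but the concrete degree bookkeeping is off and leaves the real difficulty untouched. Your intermediate claim $|\G^F|_\ell=|\c_{\G^*}(s)^{F^*}|_\ell$ does not follow from $s$ being an $\ell'$-element (the $\ell$-part of the class size of an $\ell'$-semisimple element can be nontrivial), though this turns out to be irrelevant to the final computation. The load-bearing error is the unipotent degree formula: for a unipotent $e$-cuspidal character $\psi$ of $\c_{\G^*}(s)^{F^*}$ (with $\H:=\c^\circ_{\G^*}(s)$), what one actually obtains from the Brou\'e--Malle--Michel analysis under these hypotheses is $\psi(1)_\ell=|\c_{\G^*}(s)^{F^*}|_\ell/|\z(\H)^{F^*}|_\ell$, not $|\c_{\G^*}(s)^{F^*}|_\ell$ --- i.e.\ $\psi$ has $\ell$-defect $\nu_\ell(|\z(\H)^{F^*}|)$, not zero. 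Fed through the Jordan decomposition degree formula this gives $\chi(1)_\ell=|\G^F|_\ell/|\z(\H)^{F^*}|_\ell$, and what is actually needed is $\z(\H)^{F^*}_\ell=1$.

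That last equality is strictly stronger than $s_\ell=1$ and is not obtained by the argument you sketch: even for an $\ell'$-element $s$ the group $\z(\c^\circ_{\G^*}(s))^{F^*}$ can have nontrivial $\ell$-torsion. This is exactly the point the paper's proof addresses. It invokes $e$-cuspidality a second time, via \cite[Proposition 1.10]{Cab-Eng99}, to get $\z^\circ(\G^*)_{\Phi_e}=\z^\circ(\H)_{\Phi_e}$ (so $\G^*$ is the only $e$-split Levi containing $\H$), then observes that $\H$ is contained in the $e$-split Levi $\K:=\c_{\G^*}(\z^\circ(\c^\circ_{\G^*}(Z))_{\Phi_e})$ for $Z:=\z(\H)^{F^*}_\ell$, forcing $\K=\G^*$, hence $\gamma_{\ell,e}(Z)=\z(\G^*)^{F^*}_\ell$; Lemma~\ref{lem:e-closed and centre} then pins $Z$ inside $\z(\G^*)^{F^*}_\ell$, which is trivial by hypothesis. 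Your step 1 (extracting a proper $e$-split Levi on the dual side from a nontrivial $\ell$-part) is the right mechanism but must be applied to the group $\z(\H)^{F^*}_\ell$, not merely to the $\ell$-part of the single element $s$; note also that $\c^\circ_{\G^*}(s_\ell)$ is only $E_\ell(q)$-split, so one must pass to the $e$-split envelope $\c_{\G^*}(\z^\circ(\c^\circ_{\G^*}(s_\ell))_{\Phi_e})$ and argue its properness separately (precisely what Lemma~\ref{lem:e-closed and centre} and the machinery of Section~\ref{sec:Adapting ell-groups to e-split Levi} provide).
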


\begin{proof}
Let $s$ be a semisimple element of $\G^{*F^*}$ whose (rational) Lusztig series contains the character $\chi$ (see \cite[Definition 2.6.1 and Theorem 2.6.2]{Gec-Mal20}). Proceeding as in the proof of \cite[Proposition 4.18]{Ros-Generalized_HC_theory_for_Dade}, and noticing that the first part of that argument can be carried out under our assumption, we deduce that $\chi(1)_\ell=|\G^F|_\ell/|\z(\H)^F|_\ell$ where $\H:=\c_{\G^*}^\circ(s)$. It remains to prove that $\ell$ does not divide the order of $\z(\H)^F$, or equivalently, that $Z:=\z(\H)^F_\ell=1$. First, observe that $\z^\circ(\G^*)_{\Phi_e}=\z^\circ(\H)_{\Phi_e}$ according to \cite[Proposition 1.10]{Cab-Eng99} and therefore that $\G^*$ is the unique $e$-split Levi subgroup of $(\G^*,F^*)$ containing $\H$. On the other hand, basic properties of Levi subgroups show that $\H$ is contained in $\K:=\c_{\G^*}(\z^\circ(\c_{\G^*}^\circ(Z))_{\Phi_e})$ and that $\K$ is an $e$-split Levi subgroup of $(\G^*,F^*)$. It follows that $\K=\G^*$ and therefore that $\z(\K)^F_\ell=\z(\G^*)^F_\ell$. Finally, by noticing that $\z(\K)^F_\ell=\gamma_{\ell,e}(Z)$, Lemma \ref{lem:e-closed and centre} implies that $Z$ is contained in $\z(\G^*)_\ell^{F^*}$ and, since the latter is trivial by assumption, we deduce that $Z=1$ as wanted.
\end{proof}

We now apply Theorem \ref{thm:Cancellation theorem for Dade} and Lemma \ref{lem:e-split and e-closed subgroups} to reformulate the alternating sum from Dade's Projective Conjecture \ref{conj:Dade} in terms of chains in the simplicial complex $\Delta(\CL_e(\G,F))$.

\begin{prop}
\label{prop:Replacement theorem for Dade}
Let $(\G,F)$ be finite reductive group and consider a prime $\ell\in\pi(\G,F)$ not dividing the order of $\z(\G)^F$. Fix an $\ell$-block $B$ of $\G^F$, a non-negative integer $d$, and an irreducible character $\lambda$ of a subgroup $Z$ of $\z(\G)^F$. Then 
\begin{equation}
\label{eq:Replacement theorem for Dade}
\sum\limits_{\sigma\in\Delta(\mathcal{S}_\ell(\G^F))/\G^F}(-1)^{|\sigma|}\k^d(B_\sigma,\lambda)=\sum\limits_{\rho\in\Delta(\CL_e(\G,F))/\G^F}(-1)^{|\rho|}\k^d(B_\rho,\lambda)
\end{equation}
\end{prop}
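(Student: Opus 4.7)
The plan is to chain together Theorem \ref{thm:Cancellation theorem for Dade}, which reduces the sum on the left-hand side to one indexed by $\G^F$-orbits of $\ell$-chains of $e$-closed abelian $\ell$-subgroups, with the length-preserving bijection of Lemma \ref{lem:e-split and e-closed subgroups}, which converts such chains into chains of $e$-split Levi subgroups of $(\G,F)$. The residual task will be to verify that the summand $\k^d(B_\bullet,\lambda)$ is unchanged under this bijection.

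First I would apply Theorem \ref{thm:Cancellation theorem for Dade}, whose hypotheses coincide with those of the current proposition, to equate the left-hand side of \eqref{eq:Replacement theorem for Dade} with
\[\sum\limits_{\rho\in\Delta(\ab_\ell(\G^F)^{\gamma_{\ell,e}})/\G^F}(-1)^{|\rho|}\k^d(B_\rho,\lambda).\]
Since $\ell$ does not divide $|\z(\G)^F|$, we have $\z(\G)^F_\ell=1$, which is $\gamma_{\ell,e}$-fixed because $\gamma_{\ell,e}(1)=\z(\G)^F_\ell=1$. Consequently the complex $\Delta(\ab_\ell(\G^F)^{\gamma_{\ell,e}})$ agrees with $\Delta(\ab_\ell(\G^F,\z(\G)^F_\ell)^{\gamma_{\ell,e}})$, so the convention that $\ell$-chains start with $\z(\G)^F_\ell=1$ is consistent with the setup of Lemma \ref{lem:e-split and e-closed subgroups}.

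Next I would apply Lemma \ref{lem:e-split and e-closed subgroups} to obtain the $\aut_\mathbb{F}(\G^F)$-equivariant length-preserving bijection $\iota\colon\Delta(\CL_e(\G,F))\to\Delta(\ab_\ell(\G^F,\z(\G)^F_\ell)^{\gamma_{\ell,e}})$ sending $\{\L_i\}_i$ to $\{\z(\L_i)^F_\ell\}_i$. Since $\iota$ is $\G^F$-equivariant, it induces a bijection between orbit sets and preserves signs $(-1)^{|\sigma|}$. It then suffices to show that $\k^d(B_\sigma,\lambda)=\k^d(B_{\iota(\sigma)},\lambda)$ for every $e$-chain $\sigma\in\Delta(\CL_e(\G,F))$. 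For this I would verify that the $\G^F$-stabilisers agree, namely $\G^F_\sigma=\G^F_{\iota(\sigma)}$. The inclusion $\G^F_\sigma\leq\G^F_{\iota(\sigma)}$ is immediate because $\z(\L_i)^F_\ell$ is characteristic in $\L_i$, while the reverse inclusion uses the characterisation $\L_i=\c_\G(\z^\circ(\c_\G^\circ(\z(\L_i)^F_\ell))_{\Phi_e})$ coming from Lemma \ref{lem:Abelian subgroup of e-split Levi subgroup} (an $e$-closed abelian $\ell$-subgroup determines its associated $e$-split Levi), so any element normalising $\z(\L_i)^F_\ell$ normalises $\L_i$.

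The main delicate point is that the two definitions of $\k^d(B_\bullet,\lambda)$ must count the same characters: for $\ell$-chains one uses $\bl(\vartheta)^{\G^F}=B$, while for $e$-chains one requires additionally that $\bl(\vartheta)^{\G^F}$ be defined. Once we know the stabilisers agree, definedness is automatic from the $\ell$-chain viewpoint by \cite[Lemma 3.2]{Kno-Rob89} (since $\iota(\sigma)$ is an $\ell$-chain), and hence both quantities count the same set of characters of $\G^F_\sigma=\G^F_{\iota(\sigma)}$ of defect $d$ lying above $\lambda$ whose block induces to $B$. Substituting $\iota$ into the cancellation identity therefore yields \eqref{eq:Replacement theorem for Dade}; the hardest step conceptually is just that stabiliser identification, which ultimately hinges on the rigidity of the correspondence $\L\leftrightarrow\z(\L)^F_\ell$ for $e$-closed abelian $\ell$-subgroups established earlier.
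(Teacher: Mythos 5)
Your proposal is correct and follows essentially the same route as the paper: apply Theorem~\ref{thm:Cancellation theorem for Dade} to pass to chains of $e$-closed abelian $\ell$-subgroups, then use the length-preserving, $\aut_\mathbb{F}(\G^F)$-equivariant bijection of Lemma~\ref{lem:e-split and e-closed subgroups} (noting $\z(\G)^F_\ell=1$) to transfer to chains of $e$-split Levi subgroups, with the equality of stabilisers guaranteeing that $\k^d(B_\bullet,\lambda)$ is preserved. Your extra remark that definedness of block induction for $e$-chains is inherited from the $\ell$-chain side via \cite[Lemma 3.2]{Kno-Rob89} is a nice clarification of a point the paper treats implicitly, but it does not change the argument.
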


\begin{proof}
By applying Theorem \ref{thm:Cancellation theorem for Dade} we can replace the alternating sum on the left-hand side of the equality \eqref{eq:Replacement theorem for Dade} with the corresponding sum over $\G^F$-conjugacy classes of chains of $e$-closed abelian $\ell$-subgroups, that is, over the $\G^F$-orbits in the simplicial complex $\Delta(\ab_\ell(\G^F)^{\gamma_{\ell,e}})$ (see \eqref{eq:Cancellation theorem for Dade}). Next, notice that under our assumption we have $\z(\G)^F_\ell=1$ and therefore Lemma \ref{lem:e-split and e-closed subgroups} exhibits a bijection $\delta$ between $\Delta(\ab_\ell(\G^F)^{\gamma_{\ell,e}})$ and the simplicial complex $\Delta(\CL_e(\G,F))$ corresponding to the poset of $e$-split Levi subgroups of $(\G,F)$. Furthermore, the equivariance properties described in Lemma \ref{lem:e-split and e-closed subgroups} imply that the bijection $\delta$ preserves $\G^F$-orbits and stabilisers, i.e. we have $\G^F_\sigma=\G^F_\rho$ for every $\sigma\in\Delta(\ab_\ell(\G^F)^{\gamma_{\ell,e}})$ and $\rho=\delta(\sigma)$. In particular, this shows that $\k^d(B_\sigma,\lambda)$ coincides with $\k^d(B_\rho,\lambda)$ whenever $\delta(\sigma)=\rho$. Since we also know that $|\delta(\sigma)|=|\sigma|$, it follows that the alternating sum over $\G^F$-orbits of chains of abelian $e$-closed $\ell$-subgroups coincides with that over $\G^F$-orbits of chains of $e$-split Levi subgroups of $(\G,F)$ and hence we obtain the equality $\eqref{eq:Replacement theorem for Dade}$.
\end{proof}

Finally, we can show the equivalence of Dade's Projective Conjecture \ref{conj:Dade} and Conjecture \ref{conj:Dade reductive} under suitable hypothesis. Our next result extends \cite[Proposition 7.10]{Ros-Generalized_HC_theory_for_Dade}.

\begin{theo}
\label{thm:Replacement theorem for Dade}
Let $(\G,F)$ be finite reductive group and consider a prime $\ell\in\pi(\G,F)\cap \pi(\G^*,F^*)$ not dividing the order of $\z(\G)^F$ nor that of $\z(\G^*)^{F^*}$. Then the following statements are equivalent for every $\ell$-block $B$ of $\G^F$ with non-trivial defect, every non-negative integer $d$, and every character $\lambda$ of a subgroup $Z$ of $\z(\G)^F$:
\begin{enumerate}
\item Dade's Projective Conjecture \ref{conj:Dade} holds for $B$, $d$, and $\lambda$;
\item Conjecture \ref{conj:Dade reductive} holds for $B$, $d$, and $\lambda$. 
\end{enumerate}
\end{theo}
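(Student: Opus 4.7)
The idea is to run the alternating sum of Dade's Projective Conjecture through Proposition \ref{prop:Replacement theorem for Dade}, which rewrites it as a sum over chains of $e$-split Levi subgroups, and then to separate the contribution of the trivial chain $\{\G\}$ from the rest so that what remains fits the shape of Conjecture \ref{conj:Dade reductive}. The cuspidal correction term $k^d_{\rm c}(B,\lambda)$ will vanish under our hypotheses thanks to Lemma \ref{lem:Defect zero characters and cuspidality}, giving the equivalence.

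\textbf{First, simplify the hypotheses.} The assumption $\ell\nmid|\z(\G)^F|$ forces $Z_\ell = 1$ for every $Z\leq\z(\G)^F$, so $\mathcal{S}_\ell(\G^F,Z_\ell) = \mathcal{S}_\ell(\G^F)$; moreover, since $B$ has non-trivial defect by hypothesis, its defect groups automatically and strictly contain $Z_\ell = 1$, meeting the block-theoretic prerequisite of Dade's statement. Therefore Dade's Projective Conjecture for $(B,d,\lambda)$ reduces to the single identity
\[
\sum_{\sigma\in\Delta(\mathcal{S}_\ell(\G^F))/\G^F}(-1)^{|\sigma|}\k^d(B_\sigma,\lambda) = 0,
\]
and by Proposition \ref{prop:Replacement theorem for Dade} its left-hand side equals $\sum_{\rho\in\Delta(\CL_e(\G,F))/\G^F}(-1)^{|\rho|}\k^d(B_\rho,\lambda)$, an alternating sum over chains of $e$-split Levi subgroups ending at $\G$.

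\textbf{Next, match against Conjecture \ref{conj:Dade reductive}.} In this latter sum, isolate the contribution of the unique $0$-simplex $\{\G\}$, whose stabiliser is all of $\G^F$ and whose contribution equals $\k^d(B,\lambda)$; the positive-dimensional chains $\rho\in\Delta(\CL_e(\G,F))$ are in a natural stabiliser-preserving bijection with the chains $\rho'\in\Delta(\CL_e^\star(\G,F))$, obtained by removing the top term $\G$, which shifts the dimension by one. Substituting, Dade's Projective Conjecture translates into an identity of precisely the shape of the right-hand side of Conjecture \ref{conj:Dade reductive}, up to the cuspidal summand $k^d_{\rm c}(B,\lambda)$.

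\textbf{Finally, eliminate the cuspidal term.} Apply Lemma \ref{lem:Defect zero characters and cuspidality}: the hypotheses $\ell\in\pi(\G^*,F^*)$ and $\ell\nmid|\z(\G^*)^{F^*}|$ force every $e$-cuspidal character of $\G^F$ to have $\ell$-defect zero. Since $B$ has non-trivial defect it contains no defect-zero characters, hence no $e$-cuspidal characters, so $k^d_{\rm c}(B,\lambda) = 0$. Under this vanishing, Conjecture \ref{conj:Dade reductive} coincides with the rewritten form of Dade's Projective Conjecture obtained above, yielding the equivalence. The mathematical depth is entirely carried by Proposition \ref{prop:Replacement theorem for Dade}, which itself combines the cancellation Theorem \ref{thm:Cancellation theorem for Dade} with the identification Lemma \ref{lem:e-split and e-closed subgroups}; the principal remaining obstacle is the careful bookkeeping of chain conventions and signs in the bijection with $\Delta(\CL_e^\star(\G,F))$.
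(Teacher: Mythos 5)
Your proposal is correct and follows essentially the same route as the paper's proof: apply Proposition \ref{prop:Replacement theorem for Dade} to pass to the alternating sum over $\Delta(\CL_e(\G,F))$, isolate the contribution of the chain $\{\G\}$ (which equals $\k^d(B,\lambda)$ since its stabiliser is all of $\G^F$), and then kill $k^d_{\rm c}(B,\lambda)$ by Lemma \ref{lem:Defect zero characters and cuspidality} together with the positive-defect hypothesis. The only cosmetic difference is that you explicitly flag the dimension shift under the ``remove $\G$'' identification between the non-minimal chains of $\Delta(\CL_e(\G,F))$ and $\Delta(\CL_e^\star(\G,F))$, whereas the paper phrases this as a ``partition'' and leaves the bookkeeping implicit; in either formulation the sign conventions must be tracked carefully so that the resulting identity matches the $(-1)^{|\rho|+1}$ appearing in Conjecture \ref{conj:Dade reductive}.
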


\begin{proof}
To start, we notice that under our assumptions $Z_\ell=1$ and hence Dade's Projective Conjecture \ref{conj:Dade} holds for $B$, $d$, and $\lambda$ if and only if the alternating sum on the left-hand side of \eqref{eq:Replacement theorem for Dade} is equal to zero. Therefore, Proposition \ref{prop:Replacement theorem for Dade} shows that (i) is equivalent to the vanishing of the right-hand side of \eqref{eq:Replacement theorem for Dade}. We now separate the contributions to that alternating sum according to the partition $\Delta(\CL_e(\G,F))=\Delta(\CL_e^\star(\G,F))\cup\{\rho_0\}$ and where $\rho_0=\{\G\}$. Since $\G^F_{\rho_0}=\G^F$ it follows that $\k^d(B_{\rho_0},\lambda)=\k^d(B,\lambda)$ and therefore (i) is equivalent to the equality
\[\sum\limits_{\rho\in\Delta(\CL_e^\star(\G,F))/\G^F}(-1)^{|\rho|}\k^d(B_\rho,\lambda)+\k^d(B,\lambda)=0\]
that we rewrite as
\begin{equation}
\label{eq:Replacement theorem for Dade 1}
\k^d(B,\lambda)=\sum\limits_{\rho\in\Delta(\CL_e^\star(\G,F))/\G^F}(-1)^{|\rho|+1}\k^d(B_\rho,\lambda).
\end{equation}
Next, observe that since our $\ell$-block $B$ has positive defect it cannot contain a character of $\ell$-defect zero (see, for instance, \cite[Theorem 3.18]{Nav98}). Then, by applying Lemma \ref{lem:Defect zero characters and cuspidality}, we deduce that the $\ell$-block $B$ does not contain any $e$-cuspidal character, so that $\k_{\rm c}^d(B,\lambda)=0$. This shows that \eqref{eq:Replacement theorem for Dade 1} is equivalent to the equality stated in Conjecture \ref{conj:Dade reductive}, and therefore we conclude that (i) and (ii) are equivalent.
\end{proof}

As an immediate consequence of the above result we get Theorem \ref{thm:Main replacement theorem for Dade}.

\begin{proof}[Proof of Theorem \ref{thm:Main replacement theorem for Dade}]
We now assume that $\G$ is a simple algebraic group of simply connected type and that $\ell$ is an odd prime number good for $\G$ and not dividing the order of $\z(\G)^F$, with $\ell\neq 3$ if $(\G,F)$ has a rational component of type ${^3{\bf D}_4}$. Then surely $\ell\in\pi(\G,F)\cap \pi(\G^*,F^*)$. Furthermore, since $\G^*$ is of adjoint type, \cite[Proposition 2.4.4]{Dig-Mic20} implies that $\z(\G^*)=1$ and hence the hypothesis of Theorem \ref{thm:Replacement theorem for Dade} is satisfied. The result now follows from Theorem \ref{thm:Replacement theorem for Dade} by considering the trivial subgroup $Z=1$, with its trivial character $\lambda=1_Z$, and noticing that $\k^d(B_\sigma)=\k^d(B_\sigma,\lambda)$ for every $\sigma$ belonging to $\Delta(\mathcal{S}_\ell(\G^F))$ or to $\Delta(\CL_e(\G,F))$.
\end{proof}

We now come to the study of the Character Triple Conjecture. In the next proposition we show that, under suitable hypotheses, the pairs $(\sigma,\vartheta)$ of $\C^d(B)$, for an $\ell$-block $B$ of $\G^F$ and a non-negative integer $d$, can be replaced with analogous pairs in which we consider chains belonging to the simplicial complex of $e$-split Levi subgroups $\Delta(\CL_e(\G,F))$. For this purpose, let us define the set $\C^d_e(B)$ consisting of those pairs $(\rho,\chi)$ where $\rho$ belongs to $\Delta(\CL_e(\G,F))$ and $\chi$ is an irreducible character of the stabiliser $\G_\rho^F$ with $\ell$-defect $d$ and such that, if $\bl(\chi)$ denotes the $\ell$-block of $\G_\rho^F$ containing $\chi$, the induced $\ell$-block $\bl(\chi)^{\G^F}$ is defined and coincides with $B$. Observe that it is not clear, in general, whether block induction from $\G_\rho^F$ to $\G^F$ is defined, however in the results presented below this will always be the case.

\begin{prop}
\label{prop:Replacement theorem for CTC}
Let $(\G,F)$ be a finite reductive group and consider a prime $\ell\in\pi(\G,F)$ not dividing the order of $\z(\G)^F$. Fix an $\ell$-block $B$ of $\G^F$ and a non-negative integer $d$. Then there exist $\aut_\mathbb{F}(\G^F)_B$-equivariant bijections
\[\Psi_\pm:\C^d_{\gamma_{\ell,e}}(B)_\pm/\G^F\to \C^d_e(B)_\pm/\G^F\]
such that
\[\left(X_{\sigma,\vartheta},\G^F_\sigma,\vartheta\right)\iso{\G^F}\left(X_{\rho,\chi},\G^F_\rho,\chi\right)\]
for every $(\sigma,\vartheta)\in\C^d_{\gamma_{\ell,e}}(B)_\pm$ and $(\rho,\chi)\in\Psi_\pm(\overline{(\sigma,\vartheta)})$ and where $X:=\G^F\rtimes \aut_\mathbb{F}(\G^F)$.
\end{prop}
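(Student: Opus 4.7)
The plan is to obtain $\Psi_\pm$ as a direct transport of structure along the bijection $\delta$ of Lemma \ref{lem:e-split and e-closed subgroups}. Under the standing hypothesis $\ell\nmid|\z(\G)^F|$ we have $\z(\G)^F_\ell=1$, so the $\aut_\mathbb{F}(\G^F)$-equivariant, length-preserving bijection
\[\delta:\Delta\left(\ab_\ell(\G^F)^{\gamma_{\ell,e}}\right)\to\Delta\left(\CL_e(\G,F)\right),\qquad \iota=\delta^{-1},\]
is available. I will define
\[\Psi_\pm\bigl(\overline{(\sigma,\vartheta)}\bigr):=\overline{(\delta(\sigma),\vartheta)},\qquad (\sigma,\vartheta)\in\C^d_{\gamma_{\ell,e}}(B)_\pm.\]

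The key observation, which makes everything else essentially automatic, is that the $\aut_\mathbb{F}(\G^F)$-equivariance of $\delta$ upgrades to $X$-equivariance (where $X=\G^F\rtimes\aut_\mathbb{F}(\G^F)$) and yields the \emph{literal} equalities of stabilisers
\[\G^F_\sigma=\G^F_{\delta(\sigma)}\qquad\text{and}\qquad X_{\sigma,\vartheta}=X_{\delta(\sigma),\vartheta}.\]
Consequently $\vartheta$ is an irreducible character of the same underlying subgroup on both sides, with the same $\ell$-defect $d$ and the same block $\bl(\vartheta)$. Since $\sigma$ is an $\ell$-chain in the Brown complex, $\bl(\vartheta)^{\G^F}$ is defined by \cite[Lemma 3.2]{Kno-Rob89} and equals $B$ by hypothesis, so $(\delta(\sigma),\vartheta)$ lies in $\C^d_e(B)$; the length-preservation $|\delta(\sigma)|=|\sigma|$ places it in the correct sign class. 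Thus $\Psi_\pm$ is well defined on $\G^F$-orbits and takes values in $\C^d_e(B)_\pm/\G^F$. The inverse map is $\overline{(\rho,\chi)}\mapsto\overline{(\iota(\rho),\chi)}$, well defined by the symmetric argument (note in particular that for $(\rho,\chi)\in\C^d_e(B)$ the character $\chi$ acquires a well-defined block induction to $\G^F$ because $\G^F_\rho=\G^F_{\iota(\rho)}$ is a chain stabiliser for the $\ell$-chain $\iota(\rho)$). Equivariance of $\Psi_\pm$ under $\aut_\mathbb{F}(\G^F)_B$ is inherited directly from the $\aut_\mathbb{F}(\G^F)$-equivariance of $\delta$.

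Finally, the required character-triple relation
\[\left(X_{\sigma,\vartheta},\G^F_\sigma,\vartheta\right)\iso{\G^F}\left(X_{\delta(\sigma),\vartheta},\G^F_{\delta(\sigma)},\vartheta\right)\]
is trivial: by the stabiliser equalities above, the two triples coincide on the nose, so the relation holds via the identity isomorphism. I expect no real obstacle in this argument; the only delicate point is the well-definedness of block induction on the $e$-split Levi side (flagged in the definition of $\C^d_e(B)$), and this is resolved for free by the equality $\G^F_\rho=\G^F_{\iota(\rho)}$, which transfers the Knörr--Robinson defined\-ness on $\ell$-chains to chains of $e$-split Levi subgroups.
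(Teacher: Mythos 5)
Your proposal is correct and takes essentially the same approach as the paper: both define $\Psi_\pm$ by transporting along the bijection $\delta$ of Lemma \ref{lem:e-split and e-closed subgroups}, then use the literal equalities $\G^F_\sigma=\G^F_{\delta(\sigma)}$ and $X_{\sigma,\vartheta}=X_{\delta(\sigma),\vartheta}$ together with length-preservation and $\aut_\mathbb{F}(\G^F)$-equivariance to transfer membership in the sign classes, deduce well-definedness of block induction via Kn\"orr--Robinson, and note that the required $\iso{\G^F}$-relation holds trivially because the character triples coincide. Nothing substantive differs from the paper's argument.
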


\begin{proof}
Since our assumption ensures that $\z(\G)^F_\ell$ is trivial, we can apply Lemma \ref{lem:e-split and e-closed subgroups} to obtain a bijection $\delta$ between the simplicial complexes $\Delta(\ab_\ell(\G^F)^{\gamma_{\ell,e}})$ and $\Delta(\CL_e(\G,F))$. Consider now $\sigma\in\Delta(\ab_\ell(\G^F)^{\gamma_{\ell,e}})$ and $\rho\in\Delta(\CL_e(\G,F))$ with $\delta(\sigma)=\rho$. By the properties of $\delta$ we deduce that these two chains have the same stabiliser $\G_\sigma^F=\G_\rho^F$ and the same length $|\sigma|=|\rho|$. In particular, this shows that for every irreducible character $\chi$ of $\G_\rho^F$ the $\ell$-block $\bl(\chi)$ induces a block of $\G^F$ according to \cite[Lemma 3.2]{Kno-Rob89}. Furthermore, it follows that $(\sigma,\vartheta)$ belongs to $\C^d_{\gamma_{\ell,e}}(B)_\pm$ if and only if $(\rho,\vartheta)$ belongs to $\C^d_e(B)_\pm$ while, by noticing that $X_\sigma=X_\rho$, we obtain
\[\left(X_{\sigma,\vartheta},\G^F_\sigma,\vartheta\right)\iso{\G^F}\left(X_{\rho,\vartheta},\G^F_\rho,\vartheta\right).\]
Then, using once again the equivariance properties of $\delta$, we can define the map $\Psi_\pm$ by sending the $\G^F$-orbit of the pair $(\sigma,\vartheta)\in\C^d_{\gamma_{\ell,e}}(B)_\pm$ to the $\G^F$-orbit of the pair $(\delta(\sigma),\vartheta)\in\C^d_e(B)_\pm$. As explained above, the bijections $\Psi_\pm$ satisfy the required conditions.
\end{proof}

By applying Theorem \ref{thm:Cancellation theorem for CTC} and Proposition \ref{prop:Replacement theorem for CTC} we can show that Conjecture \ref{conj:CTC reductive} implies the Character Triple Conjecture \ref{conj:CTC}. To do so, we use results on $e$-Harish-Chandra theory for $\ell$-singular series introduced in \cite{Ros-Generalized_HC_theory_for_Dade} that require the assumption \cite[Condition 3.3]{Ros-Generalized_HC_theory_for_Dade}. The latter condition provides a description of the irreducible constituents of Luszitg induction in terms of the order relation $\ll_e$ (see \cite[3.5.24]{Gec-Mal20}) and is related to a conjecture of Cabanes and Enguehard (see \cite[Notation 1.11]{Cab-Eng99} and \cite[Conjecture 3.2]{Ros-Generalized_HC_theory_for_Dade}) which is inspired by \cite[Theorem 3.11]{Bro-Mal-Mic93}. For the purpose of this paper, it is enough to know that \cite[Condition 3.3]{Ros-Generalized_HC_theory_for_Dade} holds for simply connected groups whenever $e=e_{\ell}(q)$ and $\ell\geq 5$ is good for $\G$ (see \cite[Corollary 3.7]{Ros-Generalized_HC_theory_for_Dade}). We refer the reader to \cite[Section 3]{Ros-Generalized_HC_theory_for_Dade} for further details.

\begin{theo}
\label{thm:Replacement theorem for CTC}
Let $(\G,F)$ be finite reductive group and consider a prime $\ell\in\pi(\G,F)\cap \pi(\G^*,F^*)$ not dividing the order of $\z(\G)^F$ nor that of $\z(\G^*)^{F^*}$. Assume further that $\ell\geq 5$ and that \cite[Condition 3.3]{Ros-Generalized_HC_theory_for_Dade} holds for $(\G,F)$. If Conjecture \ref{conj:CTC reductive} holds for an $\ell$-block $B$ of $\G^F$ with non-trivial defect and a non-negative integer $d$, then the Character Triple Conjecture \ref{conj:CTC} holds for $B$ and $d$ with respect to the inclusion $\G^F\unlhd \G^F\rtimes \aut_\mathbb{F}(\G^F)$.
\end{theo}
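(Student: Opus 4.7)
The plan is to reduce the Character Triple Conjecture \ref{conj:CTC} to Conjecture \ref{conj:CTC reductive} by chaining together the cancellation theorem of Theorem \ref{thm:Cancellation theorem for CTC}, the replacement bijection of Proposition \ref{prop:Replacement theorem for CTC}, and a forgetful projection from quadruples to pairs. Since $\ell$ does not divide $|\z(\G)^F|$ by hypothesis, we have $\z(\G)^F_\ell=1$, so the only admissible central $\ell$-subgroup $Z$ in the statement of \ref{conj:CTC} is the trivial one; I therefore need to construct an $\aut_\mathbb{F}(\G^F)_B$-equivariant bijection $\Omega:\C^d(B)_+/\G^F\to\C^d(B)_-/\G^F$ realising $\G^F$-block isomorphisms of the relevant character triples, with $X:=\G^F\rtimes\aut_\mathbb{F}(\G^F)$.

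First I would decompose $\C^d(B)=\C^d_{\gamma_{\ell,e}}(B)\sqcup\C^d_{\gamma_{\ell,e}}(B)^{\rm c}$ and invoke Theorem \ref{thm:Cancellation theorem for CTC} to obtain the required bijection $\Upsilon$ on the complementary half $\C^d_{\gamma_{\ell,e}}(B)^{\rm c}_\pm/\G^F$. It then suffices to construct a similar bijection $\Psi:\C^d_{\gamma_{\ell,e}}(B)_+/\G^F\to\C^d_{\gamma_{\ell,e}}(B)_-/\G^F$ on the complementary piece, whose disjoint union with $\Upsilon$ is the desired $\Omega$. Via Proposition \ref{prop:Replacement theorem for CTC} the task reduces, in turn, to constructing an $\aut_\mathbb{F}(\G^F)_B$-equivariant bijection $\Theta:\C^d_e(B)_+/\G^F\to\C^d_e(B)_-/\G^F$ preserving character triple isomorphisms, because the translation furnished by $\Psi_\pm$ already preserves stabilisers and the character triple class $(X_{\sigma,\vartheta},\G_\sigma^F,\vartheta)$.

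The remaining step consists in producing $\Theta$ from the bijection $\Lambda:\CL^d_e(B)_+/\G^F\to\CL^d_e(B)_-/\G^F$ predicted by Conjecture \ref{conj:CTC reductive}, via the forgetful projection $p:\CL^d_e(B)\to\C^d_e(B)$ defined by $(\sigma,\M,\ab(\mu),\vartheta)\mapsto(\sigma,\vartheta)$. This projection is $\aut_\mathbb{F}(\G^F)_B$-equivariant, preserves the parity of chains, and is compatible with the character triple structure since the triple $(X_{\sigma,\vartheta},\G_\sigma^F,\vartheta)$ depends only on the pair $(\sigma,\vartheta)$ and not on the auxiliary data $(\M,\ab(\mu))$. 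To show that $p$ descends to a bijection on $\G^F$-orbits I would exploit \cite[Condition 3.3]{Ros-Generalized_HC_theory_for_Dade} which, under the present hypotheses $\ell\geq 5$ and $\ell$ good for $\G$, guarantees that the $e$-Harish-Chandra series $\E(\L^F,(\M,\mu))$ partition $\irr(\L^F)$ whenever $\L$ is the smallest term of a chain $\sigma\in\Delta(\CL_e(\G,F))$. Applying Clifford theory to the normal inclusion $\L^F\unlhd\G_\sigma^F$, the irreducible constituents of $\vartheta|_{\L^F}$ form a single $\G_\sigma^F$-orbit in $\irr(\L^F)$, and consequently the associated $e$-cuspidal pairs $(\M,\mu)$ form a single $\G_\sigma^F$-orbit of $\L^F$-conjugacy classes; the residual ambiguity due to twists by linear characters of $\M^F$ is exactly absorbed by the equivalence $\ab(\mu)$ built into the definition of $\CL^d_e(B)$. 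This would show that the fibres of $p$ above a fixed $\G^F$-orbit in $\C^d_e(B)$ are single $\G^F$-orbits, so that $\Theta:=p\circ\Lambda\circ p^{-1}$ is well-defined.

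The main obstacle I anticipate is precisely this last identification: verifying that the fibre structure of $p$ is correctly controlled by generalised Harish-Chandra theory, and, crucially, that the block-induction condition $\bl(\mu)^{\G^F}=B$ is preserved throughout. This will require careful tracking of block induction between $\M^F$, $\L^F$ and $\G^F$ along the projection, and the non-triviality of the defect of $B$ combined with Lemma \ref{lem:Defect zero characters and cuspidality} to ensure that $B$ admits no $e$-cuspidal character of $\G^F$ which could interfere. Once these compatibilities are in place, the disjoint union $\Omega:=\Upsilon\sqcup\Psi$ with $\Psi$ derived from $\Theta$ via Proposition \ref{prop:Replacement theorem for CTC} will deliver the bijection demanded by the Character Triple Conjecture \ref{conj:CTC} for $B$, $d$, and the inclusion $\G^F\unlhd X$.
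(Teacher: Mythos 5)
Your proposal follows essentially the same route as the paper's proof: decompose $\C^d(B)$ into the $\gamma_{\ell,e}$-part and its complement, handle the complement via the alternation bijection $\Upsilon$ of Theorem~\ref{thm:Cancellation theorem for CTC}, transport the remaining half to $\C^d_e(B)$ by the bijections $\Psi_\pm$ of Proposition~\ref{prop:Replacement theorem for CTC}, and there lift each pair $(\sigma,\vartheta)$ to a quadruple in $\CL^d_e(B)$ so that $\Lambda$ from Conjecture~\ref{conj:CTC reductive} can be applied and then projected back. The one place where you sketch a fresh argument rather than cite is the well-definedness of the lift $p^{-1}$ on $\G^F$-orbits (your Clifford-theoretic discussion of $e$-Harish--Chandra series partitioning $\irr(\L^F)$); the paper delegates exactly this, together with the block-induction compatibility $\bl(\mu)^{\G^F}=B$ that you correctly flag as the remaining obstacle, to \cite[Proposition 4.8 and Lemma 5.5]{Ros-Generalized_HC_theory_for_Dade}, and uses Lemma~\ref{lem:Defect zero characters and cuspidality} plus non-trivial defect to ensure $\CP_e(B)=\CP^\star_e(B)$, so your anticipated difficulties are resolved by those cited results rather than requiring a new argument.
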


\begin{proof}
By assumption we know that Conjecture \ref{conj:CTC reductive} holds for $B$ and $d$, hence there exists a bijection $\Lambda$ between the $\G^F$-orbits in $\CL_e^d(B)_+$ and those in $\CL_e^d(B)_-$ that is $\aut_\mathbb{F}(\G^F)_B$-equivariant and induces $\G^F$-block isomorphisms of character triples. We want to use $\Lambda$ to construct a bijection $\Omega$ satisfying the conditions required by the Character Triple Conjecture \ref{conj:CTC}. To start, consider a pair $(\sigma,\vartheta)\in\C^d_e(B)_+$ and denote by $\L$ the smallest term of $\sigma$. Before proceeding further, we point out that the results obtained by assuming Hypothesis 4.1 of \cite{Ros-Generalized_HC_theory_for_Dade} are still valid under our assumption and by letting Lusztig induction (eventually) depend on the choice of a parabolic subgroup. We can therefore apply \cite[Lemma 5.5]{Ros-Generalized_HC_theory_for_Dade} to obtain an $e$-cuspidal pair $(\M,\mu)$ of $(\G,F)$ with $\M\leq \L$ and such that $\vartheta$ lies above some character of the $e$-Harish-Chandra series $\E(\L^F,(\M,\mu))$. Next, observe that $(\M,\mu)$ belongs to the set $\CP_e^\star(B)$. In fact, since $B$ has non-trivial defect, Lemma \ref{lem:Defect zero characters and cuspidality} ensures that $\CP_e(B)=\CP^\star_e(B)$ while, by applying \cite[Proposition 4.8 and Lemma 5.5]{Ros-Generalized_HC_theory_for_Dade}, we can use the condition $\bl(\vartheta)^{\G^F}=B$ to show that $(\M,\mu)$ belongs to $\CP_e(B)$. In particular, this shows that $(\sigma,\M,\ab(\mu),\vartheta)$ is an element of $\CL_e^d(B)_+$. Now, if $\Lambda$ maps the $\G^F$-orbit of $(\sigma,\M,\ab(\mu),\vartheta)\in\CL_e^d(B)_+$ to that of $(\sigma',\M',\ab(\mu'),\vartheta')\in\CL_e^d(B)_-$, arguing as above we conclude that $(\sigma',\vartheta')$ belongs to $\C^d_e(B)_-$. By mapping the $\G^F$-orbit of $(\sigma,\vartheta)$ to that of $(\sigma',\vartheta')$, we obtain an $\aut_\mathbb{F}(\G^F)_B$-equivariant bijection
\[\Phi:\C_e^d(B)_+/\G^F\to\C_e^d(B)_-/\G^F\]
that induces $\G^F$-block isomorphisms of character triples (this is because the $\G^F$-block isomorphisms induced by $\Lambda$ only depend on $(\sigma,\vartheta)$ and $(\sigma',\vartheta')$). Next, let $\Psi_\pm$ be the bijections introduced in Proposition \ref{prop:Replacement theorem for CTC} and define the map
\[\Theta:\C_{\gamma_{\ell,e}}^d(B)_+/\G^F\to \C_{\gamma_{\ell,e}}^d(B)_-/\G^F\]
given by $\Theta:=\Psi_-^{-1}\circ\Phi\circ\Psi_+$. It is immediate to show that $\Theta$ is $\aut_\mathbb{F}(\G^F)_B$-equivariant and, using the transitivity of $\iso{\G^F}$ (see \cite[Lemma 3.8 (a)]{Spa17}), that it induces $\G^F$-block isomorphisms of character triples. Finally, let $\Upsilon$ be the map given by Theorem \ref{thm:Cancellation theorem for CTC} and define
\[\Omega\left(\overline{(\sigma,\vartheta)}\right):=\begin{cases}
\Theta\left(\overline{(\sigma,\vartheta)}\right), &\text{if }(\sigma,\vartheta)\in\C_{\gamma_{\ell,e}}^d(B)_+
\\
\Upsilon\left(\overline{(\sigma,\vartheta)}\right), &\text{if }(\sigma,\vartheta)\in\C_{\gamma_{\ell,e}}^d(B)_+^{\rm c}.
\end{cases}\]
Since $\C^d(B)_\pm$ is partitioned into the $\aut_\mathbb{F}(\G^F)_B$-invariant subsets $\C_{\gamma_{\ell,e}}^d(B)_\pm$ and $\C_{\gamma_{\ell,e}}^d(B)_\pm^{\rm c}$, we deduce that $\Omega$ defines an $\aut_\mathbb{F}(\G^F)_B$-equivariant bijection between $\C^d(B)_+/\G^F$ and $\C^d(B)_-/\G^F$. Furthermore, $\Omega$ satisfies the condition on $\G^F$-block isomorphisms of character triples since this is also satisfied by $\Theta$ and $\Upsilon$.
\end{proof}

As a corollary of the above theorem we get Theorem \ref{thm:Main replacement theorem for inductive Dade's condition} that shows how to recover the inductive condition for Dade's Conjecture from Conjecture \ref{conj:CTC reductive}.

\begin{proof}[Proof of Theorem \ref{thm:Main replacement theorem for inductive Dade's condition}]
We now assume that $\G$ is a simple algebraic group of simply connected type such that $\G^F$ is the universal covering group of $\G^F/\z(\G)^F$. Suppose further that $\ell\geq 5$ is good for $\G$ and does not divide the order of $\z(\G)^F$. In this case, \cite[Condition 3.3]{Ros-Generalized_HC_theory_for_Dade} holds for $(\G,F)$ according to \cite[Corollary 3.7]{Ros-Generalized_HC_theory_for_Dade} (observe that the proof of this result can be carried out without assuming the Mackey formula simply by allowing the possibility that Lusztig induction might depend on the choice of a parabolic subgroup). Then, we can apply Theorem \ref{thm:Replacement theorem for CTC} to obtain a bijection $\Omega$ with the properties described in the Character Triple Conjecture \ref{conj:CTC} and with respect to the inclusion $\G^F\unlhd \G^F\rtimes \aut_\mathbb{F}(\G^F)=:X$. In particular, we have
\[\left(X_{\sigma,\vartheta},\G^F_\sigma,\vartheta\right)\iso{\G^F}\left(X_{\rho,\chi},\G_\rho^F,\chi\right)\]
for every $(\sigma,\vartheta)\in\C^d(B)_+$ whose $\G^F$-orbit corresponds to that of $(\rho,\chi)\in\C^d(B)_-$ under the map $\Omega$. Now, \cite[Lemma 3.4]{Spa17} implies that the centre of the restricted character $\vartheta_{\z(\G)^F}$ coincides with that of $\chi_{\z(\G)^F}$. If we denote this group by $Z$, then we deduce that its order is prime to $\ell$ and therefore \cite[Corollary 4.5]{Spa17} yields
\[\left(X_{\sigma,\vartheta}/Z,\G^F_\sigma/Z,\overline{\vartheta}\right)\iso{\G^F/Z}\left(X_{\rho,\chi}/Z,\G_\rho^F/Z,\overline{\chi}\right)\]
where we denote by $\overline{\vartheta}$ and $\overline{\chi}$ the characters corresponding to $\vartheta$ and $\chi$ via inflation. To conclude, observe that $\aut_\mathbb{F}(\G^F)$ induces all automorphisms of the finite group $\G^F$ (see \cite[1.15]{GLS}) and apply \cite[Lemma 7.6]{Ros-Generalized_HC_theory_for_Dade} to obtain the inductive condition for Dade's Conjecture for the $\ell$-block $B$ and the non-negative integer $d$.
\end{proof}

\bibliographystyle{alpha}
\bibliography{References}

\vspace{1cm}

Mathematisches Forschungsinstitut Oberwolfach
\\
Schwarzwaldstr. 9-11,
\\
77709 Oberwolfach-Walke,
\\
Germany

\textit{Email address:} \href{mailto:damiano.rossi00@gmail.com}{damiano.rossi.math@gmail.com}

\end{document}